\providecommand{\U}[1]{\protect\rule{.1in}{.1in}}
\newtheorem{theorem}{Theorem}
\newtheorem{corollary}[theorem]{Corollary}
\newtheorem{definition}[theorem]{Definition}
\newtheorem{example}[theorem]{Example}
\newtheorem{lemma}[theorem]{Lemma}
\newtheorem{proposition}[theorem]{Proposition}
\newtheorem{remark}[theorem]{Remark}
\newenvironment{proof}[1][Proof]{\noindent\textbf{#1.} }{\ \rule{0.5em}{0.5em}}
\begin{document}

\title{Spectra related to the length spectrum}
\author{Conrad Plaut\\Department of Mathematics\\University of Tennessee\\Knoxville, TN 37996\\cplaut@math.utk.edu}
\maketitle

\begin{abstract}
We show how to extend the Covering Spectrum (CS) of Sormani-Wei to two
spectra, called the Extended Covering Spectrum (ECS) and Entourage Spectrum
(ES) that are new for Riemannian manifolds but defined with useful properties
on any metric on a Peano continuum. We do so by measuring in two different
ways the \textquotedblleft size\textquotedblright\ of a topological
generalization of the $\delta$-covers of Sormani-Wei called \textquotedblleft
entourage covers\textquotedblright. For Riemannian manifolds $M$ of dimension
at least 3, we characterize entourage covers as those covers corresponding to
the normal closures of finite subsets of $\pi_{1}(M)$. We show that
CS$\subset$ES$\subset$MLS and that for Riemannian manifolds these inclusions
may be strict, where MLS is the set of lengths of curves that are shortest in
their free homotopy classes. We give equivalent definitions for all of these
spectra that do not actually involve lengths of curves. Of particular interest
are resistance metrics on fractals for which there are no non-constant
rectifiable curves, but where there is a reasonable notion of Laplace Spectrum
(LaS). The paper opens new fronts for questions about the relationship between
LaS and subsets of the length spectrum for a range of spaces from Riemannian
manifolds to resistance metric spaces.

Keywords: length spectrum, covering spectrum, Laplace spectrum, homotopy
critical spectrum, resistance metrics on fractals

\end{abstract}

\section{Introduction}

The Length Spectrum (LS) of a Riemannian manifold is the set of lengths of
closed geodesics, with various notions of multiplicity. The notion goes back
at least to Huber's papers in the late 1950's (\cite{H1}, \cite{H2}) in which
the notion of LS is defined for what seems to be the first time. He showed
that for compact Riemann surfaces, LS and the Laplace Spectrum (LaS) determine
one another. Put another way, two Riemann surfaces have the same LS if and
only if they are isospectral. These results have been followed by a
decades-long investigation into the relationship between LS and LaS. We will
not give a detailed history here, but will mention the fundamental open
question of whether isospectral compact Riemannian manifolds have the same
Weak Length Spectrum (defined as LS ignoring multiplicities, also called the
Absolute Length Spectrum). For Riemann surfaces it is also known that LS and
LaS are each completely determined by a finite subset, the size of which is
bounded in the first case by the injectivity radius (Theorem 10.1.4,
\cite{Bu}), and in the second case by the injectivity radius and the genus
(Theorem 14.10.1, \cite{Bu}). This result suggests that there may be
interesting relationships among geometrically and topologically significant
subsets of LS, LaS, and other spectra from geometric analysis.

One of the most important subsets of LS is what Carolyn Gordon called the
[L]-spectrum (\cite{G}) and Christina Sormani and Guofang Wei called the
Minimum Length Spectrum (MLS) in \cite{SW1}. MLS is the set of lengths of
curves that are shortest in their free homotopy classes. As is well-known
(more on this later), in a compact Riemannian manifold there is always a
shortest curve in every free homotopy class and that curve must be a closed
geodesic, hence MLS $\subset$ LS. In \cite{G}, Gordon showed that there are
isospectral manifolds with distinct MLS, considering multiplicity as the
number of distinct free homotopy classes.

In \cite{SW1}, Sormani-Wei introduced a subset of $\frac{1}{2}$MLS called the
Covering Spectrum (CS). The existence of isospectral compact Riemannian
manifolds with different CS was established in 2010 by Bart de Smit, Ruth
Gornet and Craig Sutton (\cite{GSS3}, dimensions $\geq3$ and \cite{GSS2},
surfaces). Athough CS and MLS are not \textquotedblleft spectral
invariants\textquotedblright, these spectra have mathematical applications,
some of which we will mention below, and they may have nicer properties than
LS. For example MLS is discrete and CS is finite for any compact Riemannian
manifold, but LS may not be discrete (\cite{SS}).

In this paper we show how to extend CS to two spectra, called the Extended
Covering Spectrum (ECS) and the Entourage Spectrum (ES), that are new even for
compact Riemannian manifolds. ECS is discrete for arbitrary metrics on Peano
Continua (compact, connected, locally path connected spaces), but may contain
arbitrarily small values and so generally is not contained in $\frac{1}{2}$LS.
ES contains $2$CS and is contained in MLS, and therefore is discrete for
Riemannian manifolds, but we do not know whether it is discrete for arbitrary
compact geodesic spaces (metric spaces in which every $x,y$ are joined by a
curve, called a geodesic, of length equal to $d(x,y)$). We show by example
that for compact Riemannian manifolds, CS may be properly contained in ECS and
ES, and ES may be properly contained in MLS; that is, in general all of these
spectra are distinct. Counting multiplicity, the cardinality of ES, like MLS
but unlike CS, is a topological invariant, independent of any metric.

We also show how to extend the notion of MLS to compact geodesic spaces in
general, where, contrary to statements in \cite{G1} and \cite{G2}, there may
be no shortest curve in a free homotopy class, and when there is one it may
not be a closed geodesic (\cite{BPS}). For all of these length-type spectra
except LS itself, we give equivalent alternative definitions for geodesic
spaces that don't actually involve lengths of curves. That is, we define
\textquotedblleft length spectra\textquotedblright\ when there may be no
length. Of particular interest are resistance metrics on self-similar fractals
such as the Sierpinski Gasket and Carpet (which are Peano continua). These
metric spaces have a meaningful notion of Laplacian (see \cite{Ki}, \cite{Str}
for general references), but they are generally far from being geodesic
spaces. In fact there may be no non-constant rectifiable curves at all, and
hence empty LS. On the other hand, we show in \cite{PNL} that all of the
generalized spectra discussed in the present paper have infinitely many values
for these resistance metric spaces, hence they provide good proxies for
questions about the relationship between subsets of LS and LaS.

There are at least five characterizations of CS for compact geodesic spaces,
most of which we will use at some point in this paper, and we will add two
more. The original definition of CS due to Sormani-Wei (\cite{SW1}) begins
with a general construction of Spanier (\cite{Sp}) that produces a regular
covering map determined by an open cover $\mathcal{U}$ of a connected, locally
path connected space $X$. In $\pi_{1}(X)$, let $\Gamma(\mathcal{U}) $ be the
(normal) subgroup generated by the set of homotopy classes of loops of the
form $\overline{\alpha}\ast\lambda\ast\alpha$ where $\lambda$ lies entirely in
one of the open sets in $\mathcal{U}$ and $\alpha$ starts at the basepoint.
Here $\ast$ denotes concatenation and $\overline{\alpha}$ is the reverse
parameterization of $\alpha$. According to Spanier there is a regular covering
map of $X$ such that $\Gamma(\mathcal{U})$ is the image of $\pi_{1}(X)$ via
the homomorphism induced by the covering map. The deck group $\pi
_{1}(X)/\Gamma(\mathcal{U})$ of the covering map is in a sense a
\textquotedblleft fundamental group at the scale of $\mathcal{U}%
$\textquotedblright\ because it \textquotedblleft ignores small
holes\textquotedblright\ contained in elements of $\mathcal{U}$ when modding
out by $\Gamma(\mathcal{U})$. Sormani-Wei took for $\mathcal{U}$ the open
cover of $X$ by (open) $\delta$-balls $B(x,\delta)$, and called the resulting
covering map the $\delta$-cover of $X$. In the case when $X$ is a compact
geodesic space, Sormani-Wei showed that the equivalence type of $\delta
$-covers changes at certain values, which they called the Covering Spectrum,
that are discrete in $(0,\infty)$. Viewing $\delta$ as a parameter, as
$\delta$ shrinks from the diameter of $X$ to $0$, $X$ \textquotedblleft
unrolls\textquotedblright\ more and more at the discrete values in CS. If $X$
has a universal cover $\widetilde{X}$, then $\widetilde{X}$ is the $\delta
$-cover for all sufficiently small $\delta>0$ and CS is finite. By
\textquotedblleft universal cover\textquotedblright\ we mean in a categorical
sense, which for compact geodesic spaces is equivalent to finiteness of CS
(see Theorem 3.4 in \cite{SW1} and \cite{W} for related equivalent conditions).

Another way to characterize CS uses the discrete homotopy methods of
Berestovskii-Plaut, developed in 2001 for topological groups (\cite{BPTG})
then uniform spaces in 2007 (\cite{BPUU}). In 2010, Plaut-Wilkins (\cite{PW1})
focused on the special case of metric spaces, where discrete homotopy theory
means replacing continuous curves and homotopies by discrete sequences and
homotopies called $\varepsilon$-chains and $\varepsilon$-homotopies,
respectively. An $\varepsilon$-chain is a finite sequence of points
$\{x_{0},...,x_{n}\}$ in a metric space such that for all $i$, $d(x_{i}%
,x_{i+1})<\varepsilon$. Discrete homotopies consist of finitely many steps
adding or removing a single point in an $\varepsilon$-chain (fixing the
endpoints) so that the sequence remains an $\varepsilon$-chain at each step.
Discrete homotopies \textquotedblleft ignore small holes\textquotedblright%
\ simply by skipping over them. As in \cite{BPTG} and \cite{BPUU}, one can
imitate the classical construction of the universal covering space,
substituting $\varepsilon$-chains for curves and $\varepsilon$-homotopies for
homotopies. This produces what Plaut-Wilkins called $\varepsilon$-covers
$\phi_{\varepsilon}:X_{\varepsilon}\rightarrow X$. The fact that both ways of
\textquotedblleft ignoring small holes\textquotedblright\ are essentially the
same (despite the very different constructions) was shown by Plaut-Wilkins in
\cite{PW2}: for compact geodesic spaces, the Sormani-Wei $\delta$-covers are
equivalent to the Plaut-Wilkins $\varepsilon$-covers when $\varepsilon
=\frac{2}{3}\delta$. Plaut-Wilkins also defined the Homotopy Critical Spectrum
(HCS) in \cite{PW1} to be the set of all $\varepsilon$ such that there is an
$\varepsilon$-loop that is not $\varepsilon$-null (homotopic) but is $\delta
$-null when considered as a $\delta$-chain for any $\delta>\varepsilon$. They
also showed in \cite{PW2} that CS $=\frac{3}{2}$HCS for compact geodesic
spaces. In this paper, since we will refer frequently to \cite{PW1} and
\cite{PW2}, we will generally use the notation of $\varepsilon$-covers.
Plaut-Wilkins also defined special closed geodesics called \textquotedblleft
essential circles\textquotedblright\ whose lengths are precisely three times
the values of HCS, discussed in more detail later in this paper.

ECS, ES, and our generalized definition of MLS involve expanding the class of
$\varepsilon$-covers to a larger class of covering spaces called
\textit{entourage covers. }Entourage covers are defined using the original
construction of Berestovskii-Plaut for uniform spaces (\cite{BPUU}). The
present paper is written so that no special knowledge of uniform spaces is
required, and but the language and framework of uniform spaces are useful. If
$E$ is an \textquotedblleft entourage\textquotedblright\ in a uniform space
$X$, which is a special symmetric set containing an open subset of the
diagonal in $X\times X$, one may define \textquotedblleft$E$%
-chains\textquotedblright\ to be sequences $\{x_{0},...,x_{n}\}$ such that for
all $i$, $(x_{i},x_{i+1})\in E$. Then $E$-homotopies and the corresponding
covering map $\phi_{E}:X_{E}\rightarrow X$ may be defined analogous to
$\varepsilon$-homotopies and $\varepsilon$-covers. We will provide more
details in the next section. Compact topological spaces have a unique uniform
structure in which entourages are just any symmetric subsets of $X\times X$
containing an open set containing the diagonal, and therefore the maps
$\phi_{E}$ are determined only by the topology (whereas $\varepsilon$-covers
are determined by the metric).

In metric spaces there are \textit{metric entourages} $E_{\varepsilon
}:=\{(x,y):d(x,y)<\varepsilon\}$ for $\varepsilon>0$. $E_{\varepsilon}$-chains
and $E_{\varepsilon}$-homotopies are precisely the $\varepsilon$-chains and
$\varepsilon$-homotopies previously described. Metric entourages form a
\textit{basis} for a uniform structure uniquely determined by the metric
(although there may be other uniform structures compatible with the topology).
That is, entourages in a metric space are simply symmetric subsets of $X\times
X$ that contain some $E_{\varepsilon}$. We also use an analogous notation for
$E$-balls: $B(x,E):=\{y\in X:(x,y)\in E\}$. If an $E$-loop is $E$-homotopic to
the trivial chain, we say it is $E$\textit{-null}.

In general the covering maps $\phi_{E}$ can be problematic, especially if the
balls $B(x,E)$ are not connected. For example, if $X$ is a compact metric
space that is not geodesic, the $\varepsilon$-covers many have infinitely many
components and HCS may not only not be discrete, it may even be dense in
$[0,1]$ (\cite{Cetal}, \cite{WD}). Disconnected metric balls also may occur in
resistance metrics on finite graphs (\cite{Ba}, Remark 7.19) and have been
numerically verified by Cucuringu-Strichartz for certain resistance metrics on
the Sierpinski Gasket (\cite{CS}, Section 4). For non-geodesic spaces, it is
not clear whether focusing on metric entourages is likely to be a successful
strategy. In some sense, these problems occur because there is a disconnect
between the metric and the underlying topology, which is improperly
\textquotedblleft viewed\textquotedblright\ by metric balls.

We address this problem by restricting attention to what we call
\textquotedblleft chained entourages\textquotedblright: an entourage $E$ is
\textit{chained}\ if it is contained in the closure of its interior, which is
assumed compact if the space is not, and whenever $(x,y)\in E$, $x,y$ may be
joined by an $F$-chain that lies entirely in $B(x,E)\cap B(y,E)$, for any
entourage $F$. That is, $x$ and $y$ may be joined in $B(x,E)\cap B(y,E)$ by
\textquotedblleft arbitrarily fine\textquotedblright\ chains.
\textit{Entourage covers} are by definition those covers $\phi_{E}$ such that
$E$ is a chained entourage; we will sometimes call it the $E$-cover of $X$. In
a geodesic space, metric entourages are always chained. This is true because
by the triangle inequality, any geodesic joining $x,y$ must stay inside
$B(x,\varepsilon)\cap B(y,\varepsilon)$. Then one may simply take arbitrarily
fine chains along the geodesic.

One advantage of discrete methods is that they are amenable to counting
arguments. For example, while Sormani-Wei showed using convergence methods
that the size of CS is bounded in any Gromov-Hausdorff precompact class, in
\cite{PW1} we actually give an explicit bound. Moreover, since for compact
Riemannian manifolds $\pi_{1}(M)=\pi_{\varepsilon}(E)$ for all sufficiently
small $\varepsilon>0$, by building a simplicial model of the space,
Plaut-Wilkins were able to give an explicit fundamental group finiteness
theorem generalizing those of Anderson (\cite{An}) and Shen-Wei (\cite{ShW}).
Similarly, we are able to prove the following explicit finiteness theorem,
where $C(X,\varepsilon)$ denotes the number of $\varepsilon$-balls needed to
cover $X$ and $\sigma(E):=\sup\{\varepsilon:E_{\varepsilon}\subset E\}$ (which
is a measure of \textquotedblleft size\textquotedblright\ of $E$).

\begin{theorem}
\label{T2}Let $X$ be a compact geodesic space and $\varepsilon>0$. Then the
number $NC(\varepsilon)$ of equivalence classes of $E$-covers $\phi_{E}%
:X_{E}\rightarrow X$ such that $\sigma(E)\geq\varepsilon$ is at most
\[
2^{C(X,\frac{\varepsilon}{4})^{40C(X,\frac{\varepsilon}{2})}}\text{.}%
\]

\end{theorem}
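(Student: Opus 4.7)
The plan is to reduce counting $E$-covers to counting subgroups of the deck group of an auxiliary $\delta$-cover (for $\delta$ slightly less than $\varepsilon$), then bound the order of that group using the same simplicial counting methods underlying the Plaut-Wilkins fundamental-group finiteness theorem referenced above.

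First I would pick any $\delta \in (0,\varepsilon)$. Since $\sigma(E) \geq \varepsilon$ we have $E_{\delta} \subset E$, so every $E_{\delta}$-chain is an $E$-chain and every $E_{\delta}$-homotopy is an $E$-homotopy. Hence $\phi_{E}$ factors through $\phi_{\delta}$: there is a natural regular covering map $X_{\delta} \to X_{E}$ commuting with the projections to $X$. Consequently $X_{E}$ is determined by a normal subgroup $H_{E}$ of the deck group $G_{\delta}$ of $\phi_{\delta}$, and equivalent $E$-covers correspond to the same subgroup. Therefore $NC(\varepsilon)$ is at most the number of subgroups of $G_{\delta}$, which is at most $2^{|G_{\delta}|}$ since each subgroup is a subset.

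It then remains to show $|G_{\delta}| \leq C(X,\varepsilon/4)^{40\, C(X,\varepsilon/2)}$ for a suitable $\delta < \varepsilon$. This is a short-generators argument patterned on Plaut-Wilkins. Fix an $\varepsilon/2$-net $N \subset X$ of cardinality at most $C(X,\varepsilon/2)$. Since $X$ is geodesic, any element of $G_{\delta}$ is represented by a $\delta$-loop that can be moved, by elementary $\delta$-homotopy steps, into a loop with vertices in $N$. A finer $\varepsilon/4$-net (of size $C(X,\varepsilon/4)$) controls the number of edges required for such a ``netted'' representative, essentially because refinements to scale $\varepsilon/4$ realize the relations coming from pairs of overlapping $\varepsilon/2$-balls. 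Counting such netted loops modulo $\delta$-homotopy in the simplicial 2-complex built on $N$ yields the exponential bound, with the constant $40$ absorbing the bounded number of triangle faces used in each elementary move and in the spanning-tree estimate for word length.

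The main obstacle is this final combinatorial accounting: one must convert the abstract discrete homotopy relation into a controlled presentation of $G_{\delta}$ in which both the generator count and the relator length are bounded by the two covering numbers in the theorem, and then turn that presentation into a count of elements rather than of words. Steps 1 and 2 above are essentially formal consequences of the definition of $\sigma$ and the universal property of $\phi_{\delta}$; I expect the simplicial bookkeeping producing the specific exponent $40$, together with verifying that the argument is insensitive to the choice of $\delta$ near $\varepsilon$ (so the bound genuinely depends only on $\varepsilon/4$ and $\varepsilon/2$), to be where essentially all the technical work lies.
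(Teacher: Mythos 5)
Your reduction in Step 1 (an $E$-cover with $\sigma(E)\geq\varepsilon$ is determined up to equivalence by the normal subgroup $\ker\theta_{EE_\delta}\subset\pi_\delta(X)$) matches the paper's use of Proposition \ref{cover}/Corollary \ref{equiv}. But the proof breaks at the next step: you bound $NC(\varepsilon)$ by the number of \emph{all} subgroups of the deck group $G_\delta=\pi_\delta(X)$, namely $2^{|G_\delta|}$, and then set out to prove $|G_\delta|\leq C(X,\varepsilon/4)^{40C(X,\varepsilon/2)}$. That group is in general \emph{infinite}: for a flat torus and small $\delta$, $\pi_\delta(X)\cong\pi_1(X)\cong\mathbb{Z}^2$, which has infinitely many subgroups. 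The quantity $C(X,\varepsilon/4)^{40C(X,\varepsilon/2)}$ from Plaut--Wilkins (Theorem 3.2 of \cite{PW1}) is not a bound on the order of $\pi_\varepsilon(X)$; it bounds the number of classes in $\pi_\varepsilon(X)$ represented by loops of length at most $10\varepsilon C(X,\frac{\varepsilon}{2})$. Your ``short-generators'' discussion in the second half shows (at best) that $G_\delta$ is generated by a controlled set, which says nothing about its cardinality, so the inequality you need is simply false and the bound $2^{|G_\delta|}$ is vacuous.

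The missing idea is that one must restrict attention to the normal subgroups that actually occur, and show each is the \emph{normal closure of a subset of one fixed finite set $S$ of short elements}. This is the content of Theorem \ref{firstest}: by Corollary \ref{gens}, $\ker\theta_{EE_\varepsilon}$ is generated by $E_\varepsilon$-refinements of $E$-small loops $\overline{\alpha}\ast\tau\ast\alpha$ with $\tau$ an $E$-triad; because $E$ is chained, its balls are chain connected, so Lemma \ref{conna} lets one choose the connecting chains $\alpha_\tau$ with at most $2C(X,\frac{\varepsilon}{2})$ points and the refined triads with at most $6C(X,\frac{\varepsilon}{2})$ points, giving representatives of length at most $10\varepsilon C(X,\frac{\varepsilon}{2})$. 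Hence every $\ker\theta_{EE_\varepsilon}$ is the normal closure of a subset of the set $S$ of classes of that length, and $|S|\leq C(X,\frac{\varepsilon}{4})^{40C(X,\frac{\varepsilon}{2})}$ by \cite{PW1}; counting subsets of $S$ (not subgroups of $G_\delta$) yields the stated bound. Without this step your count is infinite in the basic examples, so the gap is essential rather than bookkeeping.
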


In order to apply the above theorem in more generality, recall that the
Bing-Moise Theorem (\cite{B}, \cite{M}) says, in modern terminology, that
every Peano continuum has a compatible geodesic metric. \textquotedblleft
Compatible\textquotedblright\ means precisely that every metric entourage in
the original metric contains a metric entourage in the geodesic metric, and
vice versa. We immediately obtain:

\begin{corollary}
\label{C1}If $X$ is a Peano Continuum with a given (possibly non-geodesic)
metric and $\varepsilon>0$, then $NC(\varepsilon)<\infty$.
\end{corollary}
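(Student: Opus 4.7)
The plan is to reduce Corollary \ref{C1} to Theorem \ref{T2} by passing to a compatible geodesic metric. First I would invoke the Bing--Moise theorem to equip the Peano continuum $X$ with a geodesic metric $d'$ that is compatible with the original metric $d$. Since $X$ is compact, it carries a unique uniform structure (as noted in the excerpt), so the notions of entourage, chained entourage, $E$-cover, and equivalence of $E$-covers are the same whether computed with respect to $d$ or $d'$; only the size function $\sigma$ genuinely depends on the specific metric.

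To bridge the two metrics at the level of $\sigma$, I would use compatibility directly: the metric entourage $E_\varepsilon$ associated to $d$ contains some metric entourage $E'_{\varepsilon'}$ associated to $d'$, for some $\varepsilon' > 0$ depending only on $\varepsilon$. Any entourage $E$ with $\sigma_d(E) \geq \varepsilon$ then satisfies $E'_{\varepsilon'} \subset E_\varepsilon \subset E$, hence $\sigma_{d'}(E) \geq \varepsilon'$. Thus every $E$-cover counted by $NC(\varepsilon)$ in $(X,d)$ is also counted by the analogous quantity $NC_{d'}(\varepsilon')$ in the geodesic space $(X,d')$.

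Applying Theorem \ref{T2} to $(X,d')$ at scale $\varepsilon'$ would then yield
\[
NC(\varepsilon) \;\leq\; NC_{d'}(\varepsilon') \;\leq\; 2^{C(X,\varepsilon'/4)^{40\, C(X,\varepsilon'/2)}} \;<\; \infty,
\]
the right-hand side being finite because compactness of $X$ guarantees that the covering numbers $C(X,\varepsilon'/4)$ and $C(X,\varepsilon'/2)$ are finite. The main (and essentially only) obstacle is verifying that the chained entourage condition, and the resulting equivalence relation on covers, really is insensitive to the replacement of $d$ by $d'$; this is automatic from the uniqueness of the uniform structure on a compact space, but it requires briefly unwinding the definition to notice that the quantifier ``for any entourage $F$'' in the chained condition ranges over a family that is intrinsic to the topology rather than to the chosen metric.
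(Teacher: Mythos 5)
Your proposal is correct and follows essentially the same route as the paper, which likewise deduces the corollary from Theorem \ref{T2} by invoking the Bing--Moise theorem and the fact that compatibility means each metric entourage of the original metric contains a metric entourage of the geodesic metric (and vice versa), so that a lower bound on $\sigma$ in one metric yields one in the other. The paper states this transition as immediate; you have merely spelled out the details, including the (correct) observation that entourages, chained entourages, and equivalence of $E$-covers are intrinsic to the unique uniform structure of the compact space.
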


We are now in a position to modify the original Sormani-Wei definition of CS
to apply to entourage covers. The only complication is that $E$-covers, unlike
$\varepsilon$-covers, are not totally ordered by the relation of one space
covering another. But by Corollary \ref{C1} there are certain discrete values
of $\varepsilon$ such that $NC(\varepsilon)$ \textit{strictly increases}, and
we define ECS to be those values. We may also definte the \textit{multiplicity
}of a value $\varepsilon$ in ECS to be $NC(\varepsilon)-NC(\delta)$ for
$\delta<\varepsilon$ sufficiently close to $\varepsilon$. With a little more
effort we show:

\begin{theorem}
\label{csecs}If $X$ is a compact geodesic space then CS $\subset$ ECS.
\end{theorem}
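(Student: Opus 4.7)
The plan is to show that every value $\varepsilon_{0}\in$ CS appears as a jump point of the counting function $NC$. The guiding observation is that $\varepsilon_{0}\in$ CS means the $\varepsilon_{0}$-cover $X_{\varepsilon_{0}}=X_{E_{\varepsilon_{0}}}$ is a strictly larger cover than $X_{\varepsilon}$ for $\varepsilon$ slightly larger than $\varepsilon_{0}$, while any chained entourage $E$ with $\sigma(E)>\varepsilon_{0}$ is forced to contain some metric entourage $E_{\varepsilon}$ with $\varepsilon>\varepsilon_{0}$, and so yields an $E$-cover dominated by $X_{\varepsilon}$. Consequently the equivalence class of $X_{\varepsilon_{0}}$ will be counted in $NC(\varepsilon_{0})$ but not in $NC(\varepsilon)$ for $\varepsilon>\varepsilon_{0}$ close, producing a strict jump at $\varepsilon_{0}$.

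The first step is to translate $\varepsilon_{0}\in$ CS into $\varepsilon$-cover language via the Plaut-Wilkins correspondence: by the discreteness of CS, there is $\eta>0$ such that for every $\varepsilon\in(\varepsilon_{0},\varepsilon_{0}+\eta)$ the natural covering morphism $X_{\varepsilon_{0}}\to X_{\varepsilon}$ is proper, i.e.\ has degree at least two. Because $X$ is geodesic, metric entourages are chained (as recalled in the discussion preceding the theorem), so each $\phi_{E_{\varepsilon}}$ is a bona fide entourage cover; since $\sigma(E_{\varepsilon_{0}})=\varepsilon_{0}$, the equivalence class of $\phi_{E_{\varepsilon_{0}}}$ is tallied in $NC(\varepsilon_{0})$. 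The task reduces to showing this class is absent from $NC(\varepsilon)$ for all $\varepsilon\in(\varepsilon_{0},\varepsilon_{0}+\eta)$.

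The heart of the argument is to rule out any equivalence $\phi_{E_{\varepsilon_{0}}}\sim\phi_{E}$ for a chained entourage $E$ with $\sigma(E)\geq\varepsilon$. The condition $\sigma(E)\geq\varepsilon$ gives $E_{\varepsilon'}\subset E$ for every $\varepsilon'<\varepsilon$, so we may pick $\eta'>0$ with $\varepsilon_{0}<\varepsilon_{0}+\eta'<\varepsilon$ and obtain $E_{\varepsilon_{0}+\eta'}\subset E$. Functoriality of the Berestovskii-Plaut construction under inclusion of entourages yields a covering morphism $X_{\varepsilon_{0}+\eta'}\to X_{E}$, and composing with the proper cover $X_{\varepsilon_{0}}\to X_{\varepsilon_{0}+\eta'}$ produces a covering $X_{\varepsilon_{0}}\to X_{E}$ of degree at least two. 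An equivalence $\phi_{E}\sim\phi_{E_{\varepsilon_{0}}}$ would, by uniqueness of basepoint-preserving covering morphisms, force this composition to be a homeomorphism, which is impossible. Therefore $NC(\varepsilon_{0})>NC(\varepsilon)$ on $(\varepsilon_{0},\varepsilon_{0}+\eta)$, and $\varepsilon_{0}\in$ ECS.

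The main obstacle is nailing down the functoriality cleanly, namely that inclusion $F\subset E$ of chained entourages actually induces a covering morphism $X_{F}\to X_{E}$ commuting with the projections to $X$, and that this canonical morphism is the unique basepoint-preserving one (so it must agree with any equivalence after basepoints are aligned). Both points should follow directly from the Berestovskii-Plaut construction once $E$-homotopy classes are tracked carefully, but some attention is needed to ensure the chained hypothesis is properly respected across the inclusion. A secondary bookkeeping concern is confirming that the convention defining ECS attributes the jump to $\varepsilon_{0}$ itself rather than to some nearby value; this is taken care of by the one-sided continuity properties of $\sigma$ and of $NC$.
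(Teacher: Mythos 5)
Your proof is correct and takes essentially the same route as the paper: count the class of $\phi_{E_{\varepsilon_{0}}}$ in $NC(\varepsilon_{0})$, and for any chained $E$ with $\sigma(E)\geq\varepsilon>\varepsilon_{0}$ factor the canonical map $X_{\varepsilon_{0}}\rightarrow X_{E}$ through some $X_{\delta}$ with $\varepsilon_{0}<\delta<\varepsilon$, where the first factor is non-injective, so $\phi_{E}$ is not equivalent to $\phi_{E_{\varepsilon_{0}}}$ and $NC$ jumps at $\varepsilon_{0}$. The functoriality and uniqueness points you flag at the end are exactly what the paper supplies via the maps $\phi_{FE}$ and Proposition \ref{cover}.
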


An immediate consequence of Corollary \ref{C1} is that ECS is discrete in
$(0,\infty)$ for any metric on a Peano continuum.

\begin{remark}
By Gromov's Precompactness Criterion (\cite{G1}, \cite{G2}), a corollary of
Theorem \ref{T2} is that $NC(\varepsilon)$, and hence the number of elements
of ECS greater than $\varepsilon$, is uniformly bounded below for any fixed
$\varepsilon$ in any Gromov-Hausdorff precompact class of compact geodesic
spaces. This extends the corresponding statement about CS.
\end{remark}

We aleady know that any $\varepsilon$-cover for any geodesic metric is an
entourage cover, and a natural question is: in general, which regular covers
are entourage covers? We have the following necessary algebraic condition. The
meaning of \textquotedblleft covering map corresponding to $N$%
\textquotedblright\ is standard from algebraic topology and will be reviewed
as part of the proof.

\begin{theorem}
\label{obstruct}Let $X$ be a semi-locally simply connected Peano continuum and
$N\subset\pi_{1}(X)$ be a normal subgroup. If the covering map corresponding
to $N$ is an entourage cover, then $N$ is the normal closure of a finite set.
\end{theorem}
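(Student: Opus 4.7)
The strategy is to realize $N$ as a Spanier-type subgroup $\Gamma(\mathcal{U})$ associated to an open cover determined by $E$, reduce via compactness of $X$ to a finite subcover, and then use semi-local simple connectedness to extract a finite normal generating set.

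\emph{Step 1: Spanier realization of $N$.} Mirroring the $\delta$-cover versus $\varepsilon$-cover correspondence of \cite{PW2}, the $E$-cover $\phi_E:X_E\rightarrow X$ can be realized as the Spanier cover determined by the open cover $\mathcal{U}_E:=\{\mathrm{int}\,B(x,E):x\in X\}$ of $X$ by (interiors of) $E$-balls. Thus $N=\Gamma(\mathcal{U}_E)$, the normal subgroup of $\pi_1(X,x_0)$ generated by classes of loops $\overline{\alpha}\ast\lambda\ast\alpha$ where $\alpha$ starts at $x_0$ and $\lambda$ is a loop lying in some $B(x,E)$. Here the chainedness of $E$ is what guarantees that $\mathcal{U}_E$ is a well-behaved cover and that the Spanier and entourage constructions agree.

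\emph{Step 2: Reduction to a finite cover.} Since $X$ is compact, $\mathcal{U}_E$ has a finite subcover $\mathcal{U}_0=\{B(x_1,E),\ldots,B(x_m,E)\}$. A Lebesgue-number argument lets us partition any loop $\lambda\subset B(x,E)$ into finitely many sub-arcs, each contained in some $B(x_{i_k},E)\in\mathcal{U}_0$; the chainedness of $E$ provides paths between consecutive breakpoints inside the relevant balls (a geodesic metric furnished by the Bing-Moise theorem makes this literal). Regrouping, the class $[\overline{\alpha}\ast\lambda\ast\alpha]$ is rewritten as a product of generators of $\Gamma(\mathcal{U}_0)$, so $\Gamma(\mathcal{U}_E)=\Gamma(\mathcal{U}_0)$.

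\emph{Step 3: Finite normal generation from semi-local simple connectedness.} Fix $\alpha_i$ from $x_0$ to $x_i$. Then $\Gamma(\mathcal{U}_0)$ is normally generated by $\bigcup_i\{[\overline{\alpha_i}\ast\lambda\ast\alpha_i]:[\lambda]\in\mathrm{im}(\pi_1(B(x_i,E),x_i)\to\pi_1(X,x_i))\}$. Since $X$ is compact and semi-locally simply connected, there is a uniform Lebesgue scale $\eta>0$ such that every loop in $X$ of diameter less than $\eta$ is null-homotopic. Cover each $\overline{B(x_i,E)}$ by finitely many open sets of diameter $<\eta$; a standard van Kampen/nerve argument then expresses any loop in $B(x_i,E)$, up to loops null in $X$, as a product of finitely many ``combinatorial'' loops coming from edges in the nerve of this finite cover. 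Hence the image of $\pi_1(B(x_i,E))$ in $\pi_1(X)$ is finitely generated as a subgroup, and assembling one such finite set per $i$ gives a finite normal generating set for $N$.

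\emph{Main obstacle.} The crux is Step 3: controlling $\mathrm{im}(\pi_1(B(x_i,E))\to\pi_1(X))$. The sets $B(x_i,E)$ need not be locally well-behaved, nor need $\pi_1(B(x_i,E))$ itself be finitely generated; one must exploit that their image in $\pi_1(X)$ is dictated by the finite combinatorics of how tiny (hence null) loops overlap across a Lebesgue-refinement. Carrying out the van Kampen bookkeeping---choosing basepoints inside each $\eta$-piece, selecting connecting paths, and verifying that the finitely many nerve-generators suffice---is where the hypotheses of semi-local simple connectedness and Peano-continuity do their essential work.
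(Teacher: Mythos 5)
Your plan correctly senses that the theorem should come from compactness plus semi-local simple connectedness, but as written it has two genuine gaps, and the second one is exactly where the content of the theorem lives.

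First, Step 1 is not established. The Spanier/entourage correspondence proved in \cite{PW2} identifies $\varepsilon$-covers with Sormani--Wei $\delta$-covers only after a scale change ($\delta=\tfrac{3}{2}\varepsilon$), and this is not an artifact: an $E$-subdivision of a loop contained in a single ball $B(x,E)$ is in general only an $E^{2}$-chain, and conversely a stringing of an $E$-triad $\{x_{0},x_{1},x_{2},x_{0}\}$ need not lie in any single $E$-ball (the arc from $x_{1}$ to $x_{2}$ lives in $B(x_{1},E)\cap B(x_{2},E)$, which can leave $B(x_{0},E)$). So the identification $N=\Gamma(\mathcal{U}_{E})$ for the cover by $E$-balls is not known; the paper itself only remarks that such an equivalence ``seems likely'' and does not use it. What the paper actually uses is Theorem \ref{ker} / Corollary \ref{gens}: after passing to a geodesic metric (Bing--Moise) and choosing $\varepsilon$ small enough that $\pi_{\varepsilon}(X)=\pi_{1}(X)$ (Corollary 43 of \cite{PW1}), one has $N=\ker\theta_{EE_{\varepsilon}}$, and this kernel is normally generated by $\varepsilon$-refinements of $E$-\emph{triads}, i.e.\ three-point loops.

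Second, and more seriously, Step 3 defers rather than proves the crux. Finite generation of $\mathrm{im}\bigl(\pi_{1}(B(x_{i},E))\to\pi_{1}(X)\bigr)$ via the nerve of an $\eta$-fine cover is not a ``standard'' argument in a Peano continuum: to push breakpoints to finitely many standard vertices one needs connecting paths inside the pairwise intersections of the cover elements, and those intersections need not be path-connected (or may have infinitely many components); moreover loops in $B(x_{i},E)$ have no a priori length bound, so the word you produce in nerve-generators is unbounded and you must still show it lies in a finitely generated group. These connectivity pathologies are precisely what the paper's chained-entourage and discrete-chain machinery are built to avoid. The paper's proof never considers $\pi_{1}$ of a ball at all: its normal generating set consists only of refined triads, each of which admits an $\varepsilon$-refinement of uniformly bounded length (Lemmas \ref{conna} and \ref{refineit}), and finiteness then follows because there are only finitely many $\varepsilon$-homotopy classes of $\varepsilon$-loops of bounded length (Theorem 3.2 of \cite{PW1}, packaged here as Theorem \ref{firstest}). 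The missing idea in your write-up is this reduction to bounded-length generators; without it, the van Kampen bookkeeping you postpone is not routine and the argument does not close.
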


\begin{remark}
\label{finpres}The normal closure of a subset of a group is by definition the
smallest normal subgroup containing it. Being the normal closure of a finite
set is intimately connected with the study of finitely presented groups, since
a quotient of a finitely presented group is finitely presented if and only if
the kernel is the normal closure of a finite set. We are unable to find a
reference for this equivalence, although the proof of one implication may be
found in \cite{Gu}, Lemma 3, which cites Siebenmann's dissertation for the
statement. The other implication is an exercise in basic algebra using the
formal definition of \textquotedblleft finitely presented\textquotedblright.
\end{remark}

\begin{remark}
Suppose that $G$ is a finitely presented group with a quotient that is not
finitely presented (such $G$ are well-known to exist). Then as is also
classically known, one may construct a compact $4$-manifold $M$ with $G$ as
its fundamental group. Therefore $M$ must have a regular cover that is not an
entourage cover.
\end{remark}

For manifolds of dimension at least $3$, the condition in Theorem
\ref{obstruct} is also sufficient:

\begin{theorem}
\label{TD3}Let $M$ be any compact smooth manifold of dimension at least $3$.
If $G\subset\pi_{1}(M)$ is the normal closure of a finite set then $M$ has a
Riemannian metric for which the cover corresponding to $G$ is an $\varepsilon
$-cover for the metric.
\end{theorem}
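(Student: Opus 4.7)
Write $G = \langle\langle g_1, \ldots, g_k \rangle\rangle \subseteq \pi_1(M)$ and let $N_\varepsilon \subseteq \pi_1(M)$ denote the normal subgroup corresponding to the $\varepsilon$-cover of $(M,g)$. The plan is to build a Riemannian metric $g$ on $M$ and pick $\varepsilon > 0$ with $N_\varepsilon = G$. By the Sormani--Wei/Spanier description recalled in the introduction, $N_\varepsilon$ is the normal closure in $\pi_1(M)$ of all loops sitting inside some $g$-ball of radius $\varepsilon$, so it suffices to match this normal closure with $G$.

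Since $\dim M \geq 3$, general position represents each $g_i$ by a smoothly embedded loop $\gamma_i$, and a further perturbation makes the $\gamma_i$ pairwise disjoint. Fix any background Riemannian metric $h_0$ on $M$ and pairwise disjoint closed $h_0$-tubular neighborhoods $T_i \supset V_i \supset \gamma_i$, with $V_i$ open. Because $\dim M \geq 3$, each $T_i$ deformation retracts to $\gamma_i$, so $\pi_1(T_i) \cong \mathbb{Z}$ and the inclusion-induced map $\pi_1(T_i) \to \pi_1(M)$ has image exactly $\langle g_i \rangle \subseteq G$. Now pick a smooth conformal factor $\rho : M \to (0,\infty)$ equal to a small constant $\mu$ on each $V_i$, equal to a large constant $\Lambda$ on $M \setminus \bigcup_i T_i$, and interpolating monotonically in the radial tubular coordinate across each annulus $T_i \setminus V_i$; set $g := \rho^2 h_0$. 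Choose $\varepsilon, \mu, \Lambda$ so that: (a) $\operatorname{diam}_g(V_i) < \varepsilon$ (take $\mu$ small, since $\operatorname{diam}_g(V_i) \leq \mu\,\operatorname{diam}_{h_0}(V_i)$); (b) $d_g(\overline{V_i}, \partial T_i) > 3\varepsilon$ (a linear ramp across an annulus of $h_0$-width $a_i$ has $g$-width $\sim \Lambda a_i/2$, so take $\Lambda$ large); and (c) whenever $d_g(y, \bigcup_j V_j) \geq \varepsilon$, one has $\rho(y) \gtrsim \sqrt{\Lambda}$. To see (c), for $y$ in the ramp at $h_0$-radius $s$ from $V_i$, the condition $\int_0^s \rho \geq \varepsilon$ reads $\mu s + (\Lambda - \mu)s^2/(2 a_i) \geq \varepsilon$, forcing $s \gtrsim \sqrt{a_i\varepsilon/\Lambda}$ and hence $\rho(y) \gtrsim \sqrt{\Lambda \varepsilon/a_i}$; in $M \setminus \bigcup_i T_i$, $\rho = \Lambda$ is even larger.

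These three conditions give $N_\varepsilon = G$. From (a), $\gamma_i \subset V_i \subseteq B_g(p_i, \varepsilon)$ for any $p_i \in \gamma_i$, so $g_i \in N_\varepsilon$ and $G \subseteq N_\varepsilon$. For the reverse, (b) forces $d_g(V_i, V_j) > 6\varepsilon$ for $i \neq j$ (a path from $V_i$ to $V_j$ must cross both $\partial T_i$ and $\partial T_j$), so for every $x \in M$ exactly one of the following holds: either (i) there is a unique $i$ with $d_g(x, V_i) < 2\varepsilon$, in which case (b) gives $B_g(x, \varepsilon) \subseteq T_i$, whence the image of $\pi_1(B_g(x, \varepsilon)) \to \pi_1(M)$ factors through $\pi_1(T_i) \to \pi_1(M)$ and lies in $\langle g_i \rangle \subseteq G$; or (ii) $d_g(x, \bigcup_j V_j) \geq 2\varepsilon$, in which case every $y \in B_g(x, \varepsilon)$ satisfies $d_g(y, \bigcup_j V_j) \geq \varepsilon$, so (c) gives $\rho \gtrsim \sqrt{\Lambda}$ on the ball and hence $h_0$-diameter $\lesssim \varepsilon/\sqrt{\Lambda}$, which tends to $0$ as $\Lambda \to \infty$; for $\Lambda$ large enough this ball fits inside an $h_0$-convex (hence contractible) neighborhood of $M$, so its image in $\pi_1(M)$ is trivial. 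Taking normal closures gives $N_\varepsilon \subseteq G$, and therefore equality. The main obstacle is the quantitative estimate (c): it is what forces every ``far'' $\varepsilon$-ball to be $h_0$-small and contractible once $\Lambda$ is taken large enough, and the rest reduces to a careful ordering of parameter choices---$\mu$ small enough to achieve (a), then $\Lambda$ large enough to achieve (b) and (c) simultaneously.
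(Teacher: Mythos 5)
Your construction is essentially correct, but it takes a genuinely different route from the paper. The paper does not build the metric by hand: it invokes Theorem 2.9 and Example 5.5 of \cite{GSS3} to produce a Riemannian metric in which the generators $g_{1},\dots,g_{k-1}$ of $G$ are represented by free homotopy classes of minimal length $1$ while every other nontrivial class has minimal length at least $1.5$; it then identifies the shortest essential circles via Proposition \ref{systole} and applies Theorem 27 of \cite{PW2} to conclude that $\ker\theta_{\delta\varepsilon}$ is exactly the normal closure of the $g_{i}$, so that $\widetilde{M}/G\cong M_{\delta}$. You instead construct the metric directly --- embedded disjoint representatives $\gamma_{i}$ (this is where $\dim M\geq3$ enters for you, not in the deformation retraction of $T_{i}$, which holds in any dimension), conformally shrunk tubes carrying the generators, and a stretched complement forcing every ``far'' $\varepsilon$-ball to be $h_{0}$-small and hence contained in a convex neighborhood --- and then verify $\Gamma(\mathcal{U}_{\varepsilon})=G$ at the level of Spanier subgroups. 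In effect you re-derive by hand the special case of the de Smit--Gornet--Sutton prescription theorem that the paper cites, which makes your argument more self-contained and geometrically transparent, while the paper's version is shorter given the cited machinery and stays inside the essential-circle framework used elsewhere in the paper. Your dichotomy (unique near tube versus far from all tubes), the width estimate (b), and the quantitative lower bound (c) on $\rho$ all check out, including the point that a $g$-short path from $y$ to $V_{i}$ must cross the full annulus if $y\notin T_{i}$.

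One normalization should be repaired: the $\varepsilon$-cover in the sense of this paper (Plaut--Wilkins) is equivalent to the Sormani--Wei $\delta$-cover for $\delta=\tfrac{3}{2}\varepsilon$, so $N_{\varepsilon}$ is the Spanier subgroup of the cover by $\tfrac{3\varepsilon}{2}$-balls, not by $\varepsilon$-balls. This is harmless because the theorem only asserts that the cover corresponding to $G$ is an $\varepsilon$-cover for \emph{some} $\varepsilon$, and your conditions (a)--(c) are stable under enlarging the ball radius by a factor of $\tfrac{3}{2}$ once you build in slack (e.g.\ require $d_{g}(\overline{V_{i}},\partial T_{i})>5\varepsilon$ and run the far case with $3\varepsilon$ in place of $2\varepsilon$); then $\Gamma(\mathcal{U}_{\delta})=G$ for all $\delta\in[\varepsilon,\tfrac{3}{2}\varepsilon]$ and the $\tfrac{2}{3}\delta$-cover is the cover corresponding to $G$. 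You should also smooth the linear ramp, which does not affect the estimates.
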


\begin{corollary}
\label{TD3C}If $M$ is a smooth manifold of dimension at least $3$, a normal
subgroup $G$ of $\pi_{1}(X)$ corresponds to an $E$-cover if and only if $G$ is
the normal closure of a finite set. Moreover, we may always take $E$ to be an
open entourage.
\end{corollary}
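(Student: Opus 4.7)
The plan is to derive Corollary \ref{TD3C} as a direct consequence of Theorems \ref{obstruct} and \ref{TD3}, once we verify that the two notions of ``cover corresponding to $G$'' coincide and that the $\varepsilon$-cover produced by Theorem \ref{TD3} is genuinely an $E$-cover with $E$ open.

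For the ``only if'' direction, I would first check that a compact smooth manifold $M$ satisfies the hypotheses of Theorem \ref{obstruct}: it is a Peano continuum, and being locally Euclidean it is locally simply connected, hence in particular semi-locally simply connected. Any metric on $M$ compatible with the topology therefore falls under Theorem \ref{obstruct}, so if $G \subset \pi_1(M)$ corresponds to an entourage cover with respect to \emph{any} such metric, then $G$ must be the normal closure of a finite subset of $\pi_1(M)$.

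For the ``if'' direction, suppose $G$ is the normal closure of a finite subset of $\pi_1(M)$. Invoke Theorem \ref{TD3} to produce a Riemannian metric on $M$ and an $\varepsilon > 0$ such that the regular cover corresponding to $G$ is equivalent to the $\varepsilon$-cover $\phi_\varepsilon : M_\varepsilon \to M$, that is, to $\phi_{E_\varepsilon}$ for the metric entourage $E_\varepsilon = \{(x,y) : d(x,y) < \varepsilon\}$. A compact Riemannian manifold is a geodesic space, and the excerpt already observes that metric entourages in geodesic spaces are chained; hence $E_\varepsilon$ is chained and $\phi_\varepsilon$ is an entourage cover. The ``moreover'' clause is immediate, since $E_\varepsilon$ is open in $M \times M$ by continuity of the distance function.

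The main obstacle is not the logical assembly but the bookkeeping needed to confirm that the two notions of ``cover corresponding to $G$'' really agree: the classical one, in which $G$ is the image $(\phi_E)_*(\pi_1(M_E))$ under the covering map, and the discrete-homotopy one built from $\varepsilon$-chains. This identification is implicit in the statement of Theorem \ref{TD3} and is a standard feature of the Plaut--Wilkins framework, where for a compact geodesic space the deck group of $\phi_\varepsilon$ is naturally $\pi_1(M)/\Gamma(\mathcal{U}_\varepsilon)$ with $\mathcal{U}_\varepsilon$ the open cover by $\varepsilon$-balls, and the image of $\pi_1(M_\varepsilon)$ is precisely $\Gamma(\mathcal{U}_\varepsilon)$. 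Granting this compatibility, the two theorems slot together to give both implications of the corollary along with the openness of $E$.
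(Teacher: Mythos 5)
Your proposal is correct and matches the paper's (implicit) argument: the corollary is stated without a separate proof precisely because it is the conjunction of Theorem \ref{obstruct} (the ``only if'' direction, applicable since a compact manifold is a semi-locally simply connected Peano continuum) and Theorem \ref{TD3} (the ``if'' direction), with the ``moreover'' clause following because the metric entourage $E_{\varepsilon}$ is open and, in a geodesic space, chained. Your added care about the uniqueness of the uniform structure on a compact space and the compatibility of the classical and discrete notions of ``cover corresponding to $G$'' is sound and fills in exactly the bookkeeping the paper leaves to the reader.
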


For closed manifolds of dimension $1$ there is a simple statement: the only
entourage covers of the circle are the trivial and universal covers (which are
always entourage covers for manifolds), see Example \ref{circle}. We do not
know much about the situation for closed surfaces, including, for example,
exactly which covers of the $2$-torus are entourage covers. See also Example
\ref{moebius} concerning the M\"{o}bius Band.

We will define later in this paper (Definition \ref{ehomc}) a notion of
$E$-homotopy (or free $E$-homotopy) of curves, which essentially means the two
curves may be \textquotedblleft$E$-subdivided\textquotedblright\ into
$E$-chains that are $E$-homotopic (or freely $E$-homotopic). Equivalently, the
curve lifts as a loop to $X_{E}$ (Lemma \ref{ehomo}). We say a chained
entourage is \textquotedblleft essential\textquotedblright\ if there is an $E
$-loop that is not $E$-null. The next proposition (correctly) generalizes the
classical statement mentioned above about shortest curves in free homotopy
classes in Riemannian manifolds. We will use the term \textquotedblleft%
$\varepsilon$-geodesic\textquotedblright\ to describe a curve that is
arclength parameterized and minimizing on all intervals of length
$\varepsilon$, i.e. the distance between the endpoints of such segments is
$\varepsilon$. This is similar to the concept of $\frac{1}{k}$-geodesic of
Sormani (\cite{SCL}) but we obtain more precise results by allowing arbitrary
values of $\varepsilon$. A closed curve $c$ that is an $\varepsilon$-geodesic
for some $\varepsilon>0$ for any reparameterization involving a parameter
shift is called a \textquotedblleft closed $\varepsilon$%
-geodesic\textquotedblright\ or simply a \textquotedblleft closed
geodesic\textquotedblright\ if a particular $\varepsilon$ isn't specified.
Briefly, we express this by saying $c$ is minimal on all segments of length
$\varepsilon$, understanding that when $c$ is closed this includes segments
that have the common start/end point in the interior.

\begin{proposition}
\label{semi}Let $X$ be a compact geodesic space, $E$ be a chained entourage in
$X$, and $c$ be a closed curve in $X$. Then $c$ has a shortest curve
$\overline{c}$ in its free $E$-homotopy class, and for any such $\overline{c}$,

\begin{enumerate}
\item $\overline{c}$ is non-constant if and only if $E$ is essential, and

\item if $\overline{c}$ is non-constant then $\overline{c}$ is a closed
$\frac{3\varepsilon}{2}$ geodesic whenever $E_{\varepsilon}\subset E.$
\end{enumerate}
\end{proposition}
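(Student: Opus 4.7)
The plan is to produce $\overline{c}$ by an Arzelà--Ascoli argument, deduce (1) from the lifting characterization of free $E$-homotopy, and attack (2) by a local-shortcut contradiction exploiting the chained entourage property.

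For existence, parameterize every curve in the free $E$-homotopy class of $c$ proportionally to arclength and take a minimizing sequence $\{c_n\}$. These are equi-Lipschitz with uniformly bounded length, so compactness of $X$ furnishes a uniform limit $\overline{c}$, and lower semicontinuity of length gives $\mathrm{length}(\overline{c}) \leq \liminf_n \mathrm{length}(c_n)$. The point needing care is that $\overline{c}$ lies in the same free $E$-homotopy class: this follows from the general principle that any two closed curves which are uniformly close enough (relative to an entourage $F$ chosen so that a suitable composition $F\circ F\circ F$ refines $E$) admit compatible $E$-subdivisions at matching vertices that interleave into a single free $E$-homotopy.

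For (1), Lemma~\ref{ehomo} identifies free $E$-nullity of a curve with $E$-nullity of its $E$-subdivisions. If $E$ is not essential then every $E$-loop is $E$-null, so every closed curve is freely $E$-null and any $\overline{c}$ has length $0$; conversely, if $E$ is essential, promoting a non-$E$-null $E$-loop into an actual closed curve (the chained property of $E$ provides curves between consecutive vertices inside the required $E$-balls) yields a free $E$-homotopy class with strictly positive infimum length, hence a non-constant $\overline{c}$.

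For (2), I argue by contradiction. Parameterize $\overline{c}$ by arclength and suppose it fails to be minimizing on some subarc $\overline{c}|_{[t,s]}$ with $L := s - t \leq \frac{3\varepsilon}{2}$, so that $d(\overline{c}(t), \overline{c}(s)) < L$. Let $\gamma$ be a minimizing geodesic between the endpoints and form $c'$ by splicing $\gamma$ in place of $\overline{c}|_{[t,s]}$; then $\mathrm{length}(c') < \mathrm{length}(\overline{c})$, and the concatenation $\alpha = \overline{c}|_{[t,s]} \ast \overline{\gamma}$ is a closed loop of length strictly less than $3\varepsilon$. The crux is the sublemma, underlying the Plaut--Wilkins identity CS $= \frac{3}{2}$HCS, that any closed curve of length strictly less than $3\varepsilon$ in a geodesic space is $E_\varepsilon$-null: sample three points at equal $L/3$-spacing along $\alpha$, observe that the resulting triangle is an $\varepsilon$-chain, and contract it by iterated midpoint refinement along the three geodesic sides. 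Since $E_\varepsilon \subset E$, the loop $\alpha$ is $E$-null, so $c'$ is freely $E$-homotopic to $\overline{c}$, contradicting minimality. The main obstacle is the short-loop sublemma with the sharp constant $3$: a single midpoint bisection of an $\varepsilon$-triangle does not automatically yield smaller $\varepsilon$-triangles (an edge can stretch toward $2\varepsilon$), so the refinement must be tracked inductively, using the strict inequality $L/3 < \varepsilon$ rather than just $\leq \varepsilon$. I would either cite the Plaut--Wilkins treatment of essential circles directly or carry out the bookkeeping by hand.
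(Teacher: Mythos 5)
Your proposal follows essentially the same route as the paper: a minimizing sequence parameterized proportionally to arclength, Arzel\`a--Ascoli plus the fact that uniformly convergent curves of bounded length are eventually freely $E$-homotopic (the paper's Proposition \ref{uniform}) to get $\overline{c}$ in the right class, and for (2) the identical splice-and-contradict argument using the fact that loops of length less than $3\varepsilon$ are $\varepsilon$-null (Lemma 33 of \cite{PW1}), hence $E$-null since $E_{\varepsilon}\subset E$. The only difference is cosmetic: you sketch the proof of the short-loop sublemma and of the ``close curves are freely $E$-homotopic'' step, whereas the paper cites these as previously established results.
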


We now have the following characterizations of MLS and CS for Riemannian
manifolds. Each pair consists of a statement involving lengths of curves
(which in a general metric space might be infinite), and another that makes
sense and is always finite in any metric space. By the length $L(\alpha)$ of a
finite chain $\alpha=\{x_{0},...x_{n}\}$ in a metric space we mean $\sum
_{i=1}^{n}d(x_{i-1},x_{i})$.

\begin{theorem}
\label{final}Suppose that $M$ is a compact Riemannian manifold. Then over the
set of all essential entourages $E$,

\begin{enumerate}
\item MLS is the set of

\begin{enumerate}
\item lengths of non-constant closed curves that are shortest in their free
$E$-homotopy class.

\item lengths of non-trivial $E$-loops that are shortest in their free
$E$-homotopy class.
\end{enumerate}

\item CS is the set of

\begin{enumerate}
\item half the shortest lengths of closed curves that are not freely $E$-null.

\item half the shortest lengths of $E$-loops that are not freely $E$-null.
\end{enumerate}
\end{enumerate}
\end{theorem}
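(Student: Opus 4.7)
The plan is to deduce all four characterizations from three ingredients already in hand: Proposition \ref{semi} (existence and closed-geodesic structure of a shortest curve in each free $E$-homotopy class), the Plaut--Wilkins identity CS $=\frac{3}{2}$HCS, and the essential circle theorem, which says that for each $\delta$ in HCS the shortest non-$\delta$-null $\delta$-loop has length exactly $3\delta$. I also use the fact that on a compact Riemannian manifold there is some $\varepsilon_{0}>0$ below which the $E_{\varepsilon}$-cover is the universal cover, so that free $E_{\varepsilon}$-homotopy of curves coincides with ordinary free homotopy for all $\varepsilon<\varepsilon_{0}$.

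For Part 1, I first prove (1a). If $\ell$ is in MLS, realized by a shortest closed curve $c$ in its free homotopy class, pick any $\varepsilon<\varepsilon_{0}$; then $E_{\varepsilon}$ is essential (since $[c]\neq 1$) and, because free $E_{\varepsilon}$-homotopy agrees with free homotopy for such $\varepsilon$, $c$ remains shortest in its free $E_{\varepsilon}$-homotopy class. Conversely, if $\overline{c}$ is non-constant and shortest in its free $E$-homotopy class for some essential $E$, then the free homotopy class of $\overline{c}$ is contained in its free $E$-homotopy class (because free homotopy implies free $E$-homotopy), so $\overline{c}$ is also shortest in its free homotopy class, giving $L(\overline{c})$ in MLS. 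To pass from (1a) to (1b), observe that any $E$-loop $\alpha=\{x_{0},\dots,x_{n}\}$ extends to a closed geodesic polygon $\gamma_{\alpha}$ through its vertices with $L(\gamma_{\alpha})=L(\alpha)$, and conversely any closed curve admits arbitrarily fine $E$-subdivisions of length at most its own; both operations preserve free $E$-homotopy, so the infima over each free $E$-class agree for $E$-loops and for closed curves.

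For Part 2, the forward inclusion is clean. Given $c=\frac{3\delta}{2}$ in CS with $\delta$ in HCS, take $E=E_{\delta}$. The essential circle at $\delta$ is a non-$E$-null closed curve of length $3\delta$, so $L_{E}^{\ast}\leq 3\delta$, where $L_{E}^{\ast}$ denotes the infimum length over non-freely-$E$-null closed curves. For the matching lower bound, apply Proposition \ref{semi} to a shortest non-$E$-null $E$-loop $\alpha$: it is a closed $\frac{3\varepsilon}{2}$-geodesic for every $\varepsilon$ with $E_{\varepsilon}\subset E$, and any closed $t$-geodesic has length at least $2t$ (otherwise an interval of length $t$ would wrap more than halfway and fail to be minimizing), so $L(\alpha)\geq 3\varepsilon$ for all $\varepsilon\leq\sigma(E)=\delta$ and hence $L(\alpha)\geq 3\delta$. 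Together with the length-preserving loop/curve correspondence from Part 1, this yields $L_{E}^{\ast}=3\delta$ and $L_{E}^{\ast}/2=c$ in CS.

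The main obstacle is the reverse inclusion in Part 2 for arbitrary essential $E$, not just metric ones. Given such $E$ with corresponding normal subgroup $N=\Gamma_{E}\subset\pi_{1}(M)$, set $\delta^{\ast}=\sup\{\delta:\Gamma_{\delta}\subset N\}$ and let $\delta^{\ast\ast}$ be the smallest HCS value $\geq\delta^{\ast}$. Any non-$E$-null loop is non-$\delta$-null for every $\delta<\delta^{\ast}$, so its critical scale is at least $\delta^{\ast\ast}$, and the essential circle theorem forces its length to be at least $3\delta^{\ast\ast}$. The matching upper bound should come from the essential circle at $\delta^{\ast\ast}$ itself, provided its class lies in $\Gamma_{\delta^{\ast\ast}+0}\setminus N$. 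Establishing this non-inclusion is the technical heart of the argument: it requires showing that the newly killed essential-circle class at $\delta^{\ast\ast}$ has not already been absorbed into $N$. I expect this to follow from a careful analysis of how the discrete filtration $(\Gamma_{\delta})_{\delta>0}$ meets $N$, using the finite normal generation of $N$ from Theorem \ref{obstruct} together with the very definition of $\delta^{\ast}$ as the scale at which $\Gamma_{\delta}$ first escapes $N$. Once this is established, $L_{E}^{\ast}=3\delta^{\ast\ast}$ and hence $L_{E}^{\ast}/2$ lies in CS, completing (2a); the passage to (2b) is then routine via the length-preserving correspondence.
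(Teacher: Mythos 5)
Your overall architecture for Part 1 and for the inclusion CS $\subset$ \{values\} in Part 2 tracks the paper closely: Part 1a is handled exactly as in the paper (free homotopy implies free $E$-homotopy via stringings, i.e.\ Corollary \ref{homimp}, for one direction; identification of $X_{\varepsilon}$ with the universal cover for small $\varepsilon$ for the other), and the passage between curves and $E$-loops is the content of Proposition \ref{finally2} (note that to get an $E$-loop of length \emph{exactly} $L(c)$ rather than $\leq L(c)$ the paper pads the chain by a back-and-forth insertion trick, but for matching the two infima your inequality in each direction suffices). The direction CS $\subset$ \{values\} via essential circles is also the paper's argument.

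The genuine gap is exactly where you flag it: the inclusion $\frac{1}{2}L_{E}^{\ast}\in$ CS for an \emph{arbitrary} essential entourage $E$. Your filtration plan --- locating $\delta^{\ast}=\sup\{\delta:\Gamma_{\delta}\subset N\}$, passing to the next HCS value $\delta^{\ast\ast}$, and producing an essential $\delta^{\ast\ast}$-circle whose class escapes $N$ --- is only a sketch, and you explicitly defer the decisive step (``I expect this to follow from a careful analysis\dots''). That step is not routine: you must show that among the possibly many equivalence classes of essential $\delta^{\ast\ast}$-circles at least one has class outside $N$, which requires knowing that $\Gamma_{\delta^{\ast\ast}+0}$ is the normal closure of $\Gamma_{\delta^{\ast\ast}}$ together with the essential-circle classes at scale $\delta^{\ast\ast}$ (Theorem 27 of \cite{PW2}) and sorting out the one-sided continuity of $\delta\mapsto\Gamma_{\delta}$ at critical values. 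The paper avoids all of this by invoking the de Smit--Gornet--Sutton characterization from Section 3 of \cite{GSS3}: CS consists of half the lengths of shortest loops that lift as non-loops to \emph{any} covering space of $X$. Applied to the cover $\phi_{E}:X_{E}\rightarrow X$, together with Lemma \ref{ehomo} (a loop lifts as a loop to $X_{E}$ iff it is freely $E$-null), this immediately puts $\frac{1}{2}L_{E}^{\ast}$ in CS. So either complete your filtration argument in detail (in effect reproving a special case of the \cite{GSS3} result) or cite that characterization; as written, the proof of Part 2 is incomplete.
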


We may now simply use Theorem \ref{final}.1.a as the \textit{definition} of
MLS for compact geodesic spaces, and with this definition Theorem \ref{final}
is true for any compact geodesic space (Theorem \ref{final'}). We may use
Theorem \ref{final}.1b and Theorem \ref{final}.2b as the definitions for
arbitrary metric spaces, although if the underlying space is not a Peano
continuum then there may not be many (or any!) essential entourages.

\begin{remark}
\label{twodefs}We do not know whether, for arbitrary metric spaces, the
definition using Theorem \ref{final}.1b is equivalent to the definition of CS
for metric spaces given in \cite{GSS3}, namely that CS is the collection of
all $\varepsilon>0$ such that some covering map is maximally evenly covered on
all $\varepsilon$-balls.
\end{remark}

While CS $=$ $\frac{3}{2}$HCS for compact geodesic spaces, the approaches of
Sormani-Wei and Plaut-Wilkins diverge when more general $E$-covers are added
to the mix. The Sormani-Wei playbook leads to ECS as we have already
described. To apply the Plaut-Wilkins approach, for an open, chained entourage
$E$ in a metric space $X$ we define an $E$-loop $\lambda$ (resp. curve loop
$c$) to be $E$\textit{-critical} if $\lambda$ (resp. $c$) is not $E$-null but
is $\overline{E}$-null, where $\overline{E}$ is the closure of $E$. If $E$ has
a critical $E$-loop then we will say that $E$ is \textit{critical}, and we let
$\psi(E):=\inf\{L(\lambda):\lambda$ is an $E$-critical $E$-loop$\}$. We define
the Entourage Spectrum ES to be the set of $\psi(E)$ for all critical
entourages $E$. Note that this definition does not involve lengths of curves.
We show:

\begin{theorem}
\label{final2}Let $X$ be a compact geodesic space. Then

\begin{enumerate}
\item For every open, chained entourage $E$ there is a critical $E$-loop
$\lambda$ if and only if there is a critical loop $c$.

\item If $E$ is a critical entourage then there is a critical $E$-loop (resp.
critical loop $c$) such that $L(c)=L(\lambda)=\psi(E)$.

\item $2$CS $=$ $3$HCS $\subset$ ES $\subset$ MLS.
\end{enumerate}

Moreover, there are compact Riemannian manifolds for which both of the above
inclusions are proper. There also are pairs of diffeomorphic Riemannian
manifolds that have the same CS but different ES, and pairs that have the same
ES but different MLS.
\end{theorem}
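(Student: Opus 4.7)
My plan is to handle the three numbered claims in order using Proposition \ref{semi} as the unifying tool for moving between chains and curves, then comment on the constructions needed for the examples.

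For Part 1, from a critical curve loop $c$ I would subdivide $c$ at points $x_0, \ldots, x_n = x_0$ finely enough that each consecutive pair lies in $E$ and each segment stays in an $E$-ball; the resulting $E$-loop $\lambda$ inherits $\overline{E}$-nullity (any curve $\overline{E}$-homotopy subdivides to a chain $\overline{E}$-homotopy) but not $E$-nullity, since an $E$-null fine subdivision of $c$ would make $c$ itself $E$-null. Conversely, given a critical $E$-loop $\lambda$, the geodesic structure of $X$ lets me join consecutive points by minimizing segments to form a curve loop $c$ of length $L(\lambda)$, and criticality transfers in the same way.

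For Part 2, I would take $E$-critical curve loops $c_n$ with $L(c_n) \to \psi(E)$ (using Part 1 to translate from $E$-loops) and replace each by the shortest curve $\overline{c}_n$ in its free $E$-homotopy class via Proposition \ref{semi}; this $\overline{c}_n$ is still $E$-critical because free $E$-homotopy preserves both non-$E$-nullity and $\overline{E}$-nullity. By Proposition \ref{semi}.2 each $\overline{c}_n$ is an arclength-parameterized closed $\frac{3\varepsilon}{2}$-geodesic when $E_\varepsilon \subset E$, so Arzel\`a-Ascoli produces a uniform subsequential limit $\overline{c}$ with $L(\overline{c}) \leq \psi(E)$. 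The main obstacle I expect is verifying that $\overline{c}$ itself is $E$-critical: $\overline{E}$-nullity passes to the limit because $\overline{E}$ is closed, while non-$E$-nullity relies on $E$ being open, so that non-$E$-nullity is detected by any sufficiently fine chain approximation of $\overline{c}$ and survives in the limit. Subdividing $\overline{c}$ then produces a critical $E$-loop of length $\psi(E)$.

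For Part 3, the equality $2\mathrm{CS} = 3\mathrm{HCS}$ is already in \cite{PW2}. For $3\mathrm{HCS} \subset \mathrm{ES}$, given $\varepsilon \in \mathrm{HCS}$ the open metric entourage $E_\varepsilon$ is chained (because $X$ is geodesic) and critical, and a Plaut-Wilkins essential circle of length $3\varepsilon$ supplies a critical curve loop; Part 2 then yields $\psi(E_\varepsilon) = 3\varepsilon$. For $\mathrm{ES} \subset \mathrm{MLS}$, given $\psi(E) \in \mathrm{ES}$ realized by a critical curve loop $c$, let $\widehat{c}$ be shortest in the classical free homotopy class of $c$. Since classical free homotopy implies free $E$-homotopy, a fine enough subdivision $\widehat{\lambda}$ of $\widehat{c}$ is freely $E$-homotopic to $c$ and therefore $E$-critical, so
\[
\psi(E) \leq L(\widehat{\lambda}) \leq L(\widehat{c}) \leq L(c) = \psi(E),
\]
giving $L(\widehat{c}) = \psi(E) \in \mathrm{MLS}$. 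For the moreover clause, I would exhibit flat or almost-flat Riemannian manifolds whose free homotopy classes realize entourage-critical values strictly between $2\mathrm{CS}$ and $\mathrm{MLS}$, and modify the metric on a fixed smooth manifold to produce diffeomorphic pairs whose spectra agree at one level and differ at another; these constructions depend on explicit length comparisons rather than on the framework above.
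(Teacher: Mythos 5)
Your overall architecture matches the paper's (subdivide/string to pass between curves and chains, Arzel\`a--Ascoli on a length-minimizing sequence, translate criticality through free $E$-homotopy), but two steps have genuine problems. The more serious one is your proof of ES $\subset$ MLS. You take $\widehat{c}$ to be ``shortest in the classical free homotopy class of $c$,'' but the theorem is stated for arbitrary compact geodesic spaces, where (i) a shortest curve in a classical free homotopy class need not exist --- this is exactly the failure the paper emphasizes in the Introduction and in Example \ref{nomin} --- and (ii) MLS is \emph{defined} for such spaces via Theorem \ref{final}.1a as lengths of curves shortest in their free $E$-homotopy classes, not their classical ones. The paper's argument sidesteps both issues and is shorter: the shortest critical loop $c$ of length $\psi(E)$ (which exists by Part 2) is automatically shortest in its free $E$-homotopy class, since any shorter freely $E$-homotopic curve would again be $E$-critical (free $E$-homotopy preserves both non-$E$-nullity and $\overline{E}$-nullity), contradicting minimality; membership in MLS is then immediate from the definition. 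Your argument is salvageable for Riemannian manifolds but not as written for the general case. A second, smaller gap is in the converse direction of Part 1: joining consecutive points of a critical $E$-loop $\lambda$ by minimizing geodesics need not produce a stringing of $\lambda$ when $E$ is not a metric entourage, because a geodesic from $x_i$ to $x_{i+1}$ may leave $B(x_i,E)\cap B(x_{i+1},E)$ (see Remark \ref{refinerem}); one must first pass to a refinement via Lemma \ref{strex}, or use Proposition \ref{finally2}.1, which lifts $\lambda$ to $X_E$ and builds the curve upstairs.

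The ``moreover'' clause is also not actually addressed. Generic flat or almost-flat examples will not separate these spectra; the paper's Example \ref{distinct} requires the marked-length prescription theorem of de Smit--Gornet--Sutton (Theorem 2.9 of \cite{GSS3}) applied to a manifold of dimension at least $3$ with $\pi_{1}=\mathbb{Z}_{4}$, chosen so that a topologically determined critical loop is forced into a prescribed free homotopy class whose minimal length can be tuned independently of CS, and similarly for ES versus MLS. Finally, in Part 3 your claim that ``Part 2 then yields $\psi(E_{\varepsilon})=3\varepsilon$'' needs the additional fact (Lemma 33 of \cite{PW1}) that every loop of length less than $3\varepsilon$ is $\varepsilon$-null, which supplies the lower bound $\psi(E_{\varepsilon})\geq 3\varepsilon$; Part 2 alone only says the infimum is attained.
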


\begin{remark}
The proof of Theorem \ref{final2} uses in essential ways the fact that the
metric is geodesic--in particular by lifting the metric to a geodesic metric
on $X_{E}$. However, the \textquotedblleft lifted metric\textquotedblright%
\ defined later in this paper is defined for any metric, and in particular
some of these methods may be modified for arbitrary metrics on Peano continua
(\cite{PNL}).
\end{remark}

\begin{remark}
It seems there are interesting questions involving these metric invariants
akin to classical results in the metric geometry of Riemannian manifolds. For
example, suppose that $M$ is a compact Riemannian manifold. Are there metrics
having a particular fixed spectrum (pick one of CS, ECS, ES, MLS) that are
optimal with respect to some other geometric parameters, for example having
minimal volume with fixed bounds on sectional curvature? And if so what is the
regularity of those optimal metrics? In effect this question fixes the size of
certain significant \textquotedblleft holes\textquotedblright\ in the space
and asks how the space can minimally be stretched to maintain those sizes
while constraining curvature.
\end{remark}

\begin{remark}
\label{different}Sormani-Wei have explored ways in which to extend ideas
related to CS to non-compact spaces (\cite{SWCUT}, \cite{SWNC}). Certainly CS
has meaning for non-compact geodesic spaces, for example, but it will
\textquotedblleft miss\textquotedblright\ any loops that are homotopic to
arbitrarily small loops, for example in the surface obtained by revolving the
graph of $y=e^{x}$ around the $x$-axis. Along these lines, we note that some
of the basic and technical results in this paper only require a kind of
uniform local compactness (Remark \ref{proper})--which is why we state in the
definition of \textquotedblleft chained\textquotedblright\ that the closure of
the entourage is compact when the space is not. An alternative approach to
understand such spaces might be to consider all possible uniform structures
that are compatible with the underlying topology of a given metric on the
space, rather than just the uniform structure induced by the particular
metric. For example, for the surface mentioned above the uniform structure
compatible with the metric does not \textquotedblleft see\textquotedblright%
\ the fundamental group but the uniform structure of the same space metrized
as a flat cylinder has the universal cover as an $\varepsilon$-cover.
\end{remark}

\section{Basic Constructions}

This section has a mixture of background from \cite{BPUU}, extensions of some
results in \cite{PW1}, and a completely new basic result called the Ball
Continuity Lemma. The length of a curve is defined in the standard way for
metric spaces and it is a classical result that curves having finite length
(i.e. rectifiable curves) in metric spaces always have monotone
reparameterizations proportional to arclength. We will always assume
rectifiable curves are parameterized this way. See \cite{PS} for a review,
with references, of many basic concepts from metric geometry.

We now recall a bit of basic terminology for uniform spaces. One should keep
in mind the two fundamental examples mentioned in the Introduction: metric
spaces and compact topological spaces. We have already defined the metric
entourage $E_{\varepsilon}$ in a metric space $X$. In general, a
\textit{uniform structure} on an (always Hausdorff) topological space $X$ is a
collection of symmetric subsets of $X\times X$ that contain an open set
containing the diagonal, which are called \textit{entourages.} Moreover,
entourages have the following properties: (UA) Their intersection is the
diagonal (equivalent to Hausdorff), and (UB) for every entourage $E$ there
exists an entourage $F$ such that
\[
F^{2}:=\{(x,z):\text{for some }y,(x,y),(y,z)\in F\}
\]
is contained in $E$. For metric entourages we note that it follows from the
triangle inequality that $\left(  E_{\frac{\varepsilon}{2}}\right)
^{2}\subset E_{\varepsilon}$. We may also iteratively define, for any
entourage $F$, $F^{n}$. Equivalently, in the terminology from the
Introduction, $F^{n}$ consists of all $(x,y)\in X\times X$ such that there is
an $F$-chain $\{x=x_{0},...,x_{n}=y\}$.

As mentioned in the Introduction, for an entourage $E$ in a uniform space $X$,
an $E$-chain consists of a finite sequence $\alpha=\{x_{0},...,x_{n}\}$ in $X$
such that for all $i$, $\left(  x_{i},x_{i+1}\right)  \in E$. We define
$\nu(\alpha)=n.$ The concatenation of two chains $\alpha=\{x_{0},...,x_{n}\}$
and $\beta=\{y_{0}=x_{n},y_{1},...,y_{m}\}$ is the chain $\alpha\ast
\beta:=\{x_{0},...,x_{n}=y_{0},y_{1},...,y_{m}\}$ and the reversal of $\alpha$
is the chain $\overline{\alpha}=\{x_{n},...,x_{0}\}$. An $E$-homotopy between
$E$-chains $\alpha$ and $\beta$ consists of a finite sequence $\{\alpha
=\eta_{0},...,\eta_{n}=\beta\}$ of $E$-chains $\eta_{i}$ all having the same
endpoints such that for all $i$, $\eta_{i}$ differs from $\eta_{i+1}$ by one
of the following two \textit{basic moves}:

(1) \textit{Insert} a point $x$ between $x_{i}$ and $x_{i+1}$, which we will
denote by
\[
\{x_{0},...,x_{i},\overbrace{x},x_{i+1},...,x_{n}\}
\]
and which is \textquotedblleft legal\textquotedblright\ provided $(x_{i},x)\in
E$ and $(x,x_{i+1})\in E$.

(2) \textit{Remove} a point $x_{i}$ (but never an endpoint!), which we will
denote by
\[
\{x_{0},...,x_{i-1},\underbrace{x_{i}},x_{i+1},...,x_{n}\}
\]
and which is legal provided $(x_{i-1},x_{i+1})\in E$.

The $E$-homotopy equivalence class of an $E$-chain $\alpha$ is denoted by
$[\alpha]_{E}$. We will sometimes abuse notation by dropping brackets, for
example writing $[x_{0},...,x_{n}]_{E}$ rather than $[\{x_{0},...,x_{n}%
\}]_{E}$. We note that if $E\subset F$ then $\alpha$ may also be considered as
an $F$-chain, and $[\alpha]_{F}$ also makes sense. Fixing a basepoint $\ast$,
the collection of all $[\alpha]_{E}$ such that the first point of $\alpha$ is
$\ast$ is called $X_{E}$. For any entourage $F\subset E$ in $X$, we define
$F^{\ast}\subset X_{E}\times X_{E}$ to be the set of all ordered pairs
$([\alpha]_{E},[\beta]_{E})$ such that $[\overline{\alpha}\ast\beta
]_{E}=[x_{n},y_{m}]_{E}$, where $x_{n},y_{m}$ are the endpoints of
$\alpha,\beta$, respectively, and $(x_{n},y_{m})\in F$. The sets $F^{\ast}$
form (a basis of) a uniform structure on $X_{E}$, which we will call the
\textquotedblleft lifted uniform structure\textquotedblright.

\begin{remark}
\label{star}It is an easy exercise, worthwhile for the unfamiliar reader, to
check that this definition of $F^{\ast}$ is equivalent to the more cumbersome
but sometimes useful original from \cite{BPUU}, namely that up to $E$-homotopy
we may write $\alpha=\{\ast=x_{0},...,x_{n-1},x_{n}\}$ and $\beta=\{\ast
=x_{0},...,x_{n-1},y_{n}\}$ with $(x_{n},y_{n})\in F$.
\end{remark}

Since $E$-homotopies don't change endpoints, the endpoint map $\phi_{E}%
:X_{E}\rightarrow X$, $\phi_{E}([x_{0},...,x_{n}]_{E}):=x_{n}$ is
well-defined, and its restriction to any $E^{\ast}$-ball $B([\alpha
]_{E},E^{\ast})$ is a bijection onto its image $B(x,E)$ under $\phi_{E}$,
where $x$ is the endpoint of $\alpha$. Since $\phi_{E}$ is a local bijection,
$X_{E}$ has a unique topology such that $\phi_{E}$ is a local homeomorphism,
with a basis given by all $F^{\ast}$-balls with $F\subset E$. This topology is
compatible with the lifted uniform structure.

Concatenation is compatible with $E$-homotopies--that is, if $\alpha_{1}%
,\beta_{1}$ are $E$-homotopic to $\alpha_{2},\beta_{2}$, respectively, then
$\left[  \alpha_{2}\ast\beta_{2}\right]  _{E}=\left[  \alpha_{1}\ast\beta
_{1}\right]  _{E}$. Concatenation induces a group structure on the set of all
$E$-homotopy classes $\pi_{E}(X)$ of $E$-loops starting and ending at the
basepoint $\ast$. That is, $[\lambda_{1}]_{E}[\lambda_{2}]_{E}=[\lambda
_{1}\ast\lambda_{2}]_{E}$ and $[\lambda]_{E}^{-1}=[\overline{\lambda}]_{E}$,
with $[\ast]_{E}$ as identity. The group $\pi_{E}(X)$ acts on $X_{E}$ induced
by pre-concatenation of any loop to an $E$-chain starting at $\ast$, and the
resulting maps are uniform homeomorphisms (i.e. preserve the uniform
structure). With this topology, $\phi_{E}$ is a regular covering map with deck
group $\pi_{E}(X)$ such that the $E$-balls are evenly covered by disjoint
unions of $E^{\ast}$-balls. Moreover, $X$ is identified with the quotient
space $X_{E}/\pi_{E}(X)$. Two $E$-loops $\lambda_{1}$ and $\lambda_{2}$ are
said to be \textit{freely }$E$-homotopic if there exist $E$-chains $\alpha$
and $\beta$ starting at a common point $x_{0}$, to the initial points of
$\lambda_{1}$ and $\lambda_{2}$, respectively, such that
\begin{equation}
\overline{\alpha}\ast\lambda_{1}\ast\alpha\text{ is }E\text{-homotopic to
}\overline{\beta}\ast\lambda_{2}\ast\beta\text{.}\label{tired}%
\end{equation}
It is easy to check, and we will use without reference, the following facts:
If we can satisfy Formula \ref{tired} for some $x_{0}$ then we can do it for
any other point, including the basepoint $\ast$. Likewise $\lambda_{1}$ and
$\lambda_{2}$ are freely $E$-homotopic if and only if given an $E$-chain
$\alpha$ from $x_{0}$ to the initial point of $\lambda_{1}$ then we can always
find a $\beta$ so that Formula \ref{tired} is satisfied.

\begin{remark}
One can equivalently define free $E$-homotopies of $E$-loops by adding a
special \textquotedblleft homotopy step\textquotedblright\ to enable one to
move the common start/end point. That is, \textquotedblleft double
it\textquotedblright\ by adding a repeat of the start/end point, then add a
new point (which becomes the new start/end point) between the doubled points.
However, this does not seem simpler in all of its details, or more useful than
the definition we have given.
\end{remark}

Whenever $E\subset F$ there is a natural covering map $\phi_{FE}%
:X_{E}\rightarrow X_{F}$ that simply treats an $E$-chain as an $F$-chain. That
is, $\phi_{FE}([\alpha]_{E})=[\alpha]_{F}$. Given $D\subset E\subset F$, by
definition $\phi_{FD}=\phi_{FE}\circ\phi_{ED}$. The restriction of $\phi_{EF}$
to $\pi_{F}(X)$ is a homomorphism denoted by $\theta_{EF}:\pi_{F}%
(X)\rightarrow\pi_{E}(X)$. This homomorphism is injective (resp. surjective)
if and only if $\phi_{EF}$ is injective (resp. surjective), and plays a
critical role in this paper. Note that the mapping $\phi_{EF}$ may be
identified with the quotient mapping from $X_{F}$ to $X_{F}/\ker\theta
_{EF}=X_{E}$. In the special case of metric entourages, we denote
$\phi_{E_{\varepsilon}E_{\delta}}$ by $\phi_{\varepsilon\delta}$ as in
\cite{PW1}.

We will need the following general lemma, which partly justifies why we
require that the balls in a chained entourage be contained in the closure of
their interior.

\begin{lemma}
(Ball Continuity) \label{bc}Suppose $E$ is an entourage in a metric space $X$
and $x_{i}\rightarrow x$ in $X$. Then

\begin{enumerate}
\item If $y_{i}\rightarrow y$ in $X$ and $(x_{i},y_{i})\in E$ for all $i$,
then $(x,y)\in\overline{E}$.

\item If $E$ has compact closure then $B(x_{i},E)$ is Hausdorff convergent to
a subset of $B(x,\overline{E})$.

\item If $E$ is open then $B(x,E)$ is the union of the sets $B(x,E)\cap
B(x_{i},E)$.

\item If $E$ is contained in the compact closure of its interior then
$B(x_{i},E)\underset{H}{\rightarrow}B(x,E)$ (meaning convergence in the
Hausdorff metric).
\end{enumerate}
\end{lemma}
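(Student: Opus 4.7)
The plan is to prove the four parts in order, with each supplying tools for the next.

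Part 1 is essentially the definition of closure in the product topology: the sequence $(x_i,y_i)\in E$ converges to $(x,y)$ in $X\times X$, so $(x,y)\in\overline{E}$. Part 2 is a compactness argument: the hypothesis puts every $B(x_i,E)$ inside the compact set $\pi_{2}(\overline{E})$, and by Blaschke's selection theorem the hyperspace of closed subsets of this compact set is itself compact in the Hausdorff metric. Passing to a subsequence along which $B(x_{i_k},E)$ Hausdorff-converges to some closed $L$, every point of $L$ is the limit of some $y_{i_k}\in B(x_{i_k},E)$, and Part 1 places such a limit in $B(x,\overline{E})$. Part 3 exploits openness of $E$: if $y\in B(x,E)$ then $(x,y)$ is an interior point of $E$, so $(x_i,y)\in E$ for all large $i$, which puts $y\in B(x,E)\cap B(x_i,E)$; taking the union over $i$ recovers all of $B(x,E)$.

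Part 4 combines the previous parts via the observation that $E\subset\overline{E^{\circ}}$ forces $\overline{E}=\overline{E^{\circ}}$. Applying Part 3 to the open entourage $E^{\circ}$ places $B(x,E^{\circ})$ inside the Hausdorff-liminf of the sets $B(x_i,E)$, while applying Part 2 with $\overline{E}=\overline{E^{\circ}}$ places the Hausdorff-limsup inside $B(x,\overline{E^{\circ}})$. Bridging these bounds to the intermediate set $B(x,E)$ is accomplished by a density-style argument: approximate $(x,y)\in E\subset\overline{E^{\circ}}$ by interior pairs $(z_n,w_n)\in E^{\circ}$, use the product-neighborhood structure of $E^{\circ}$ together with compactness of $\overline{E^{\circ}}$ to transfer each approximation to a pair of the form $(x,w'_n)\in E$ with $w'_n\to y$, and invoke the fact that the Hausdorff metric is insensitive to replacing a set by its closure.

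The main obstacle is Part 4. The three sets $B(x,E^{\circ})$, $B(x,E)$, and $B(x,\overline{E^{\circ}})$ need not coincide, and reconciling a liminf bound coming from the first with a limsup bound coming from the third, in order to pin down the second, is precisely where the hypothesis $E\subset\overline{E^{\circ}}$ earns its keep. Promoting the pointwise or subsequential control available from Parts 1--3 to uniform Hausdorff convergence over the full ball is the delicate technical step, and it is driven by the compactness of $\overline{E^{\circ}}$.
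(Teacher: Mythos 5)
Your Parts 1 and 3 are essentially the paper's arguments. In Part 2 you take a genuinely different route: the paper does not appeal to Blaschke selection, but instead defines $A$ to be the set of limits of sequences $z_{i}\in B(x_{i},E)$ and shows directly --- total boundedness of $A$ in one direction, a compactness-and-contradiction argument in the other --- that the \emph{full} sequence $B(x_{i},E)$ Hausdorff-converges to $A$, which lies in $B(x,\overline{E})$ by Part 1. Your subsequence extraction only shows that every subsequential Hausdorff limit is contained in $B(x,\overline{E})$; since the statement asserts convergence of the sequence itself, you still owe an argument that all subsequential limits coincide. That is repairable, but as written it is an omission.

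The genuine gap is in Part 4, precisely at the ``bridge'' you flag. The proposed transfer from approximating pairs $(z_{n},w_{n})\in E^{\circ}$ with $(z_{n},w_{n})\rightarrow(x,y)$ to pairs of the form $(x,w_{n}')\in E$ with $w_{n}'\rightarrow y$ is unjustified: nothing forces a nearby interior point of $E$ to have first coordinate $x$, and membership of $(x,y)$ in $\overline{E^{\circ}}$ does \emph{not} in general yield $y\in\overline{B(x,E^{\circ})}$. A concrete obstruction: let $X=\{0\}\cup\{1/n:n\geq1\}\cup\{10\}\subset\mathbb{R}$ and $E=E_{1/2}\cup\{(1/n,10):n\geq1\}\cup\{(10,1/n):n\geq1\}$. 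This is a symmetric open entourage with compact closure, so it satisfies the hypothesis of Part 4, yet for $x_{i}=1/i\rightarrow0$ one has $10\in B(x_{i},E)$ for every $i$ while $10$ is at distance greater than $9$ from $B(0,E)$. Thus the liminf bound coming from Part 3 and the limsup bound coming from Part 2 genuinely fail to meet at $B(x,E)$, and no density argument of the kind you sketch can close the gap from the stated hypotheses alone. (To be fair, the paper's own proof of Part 4 is equally terse at this point --- it asserts that the open case ``follows from the second and third parts'' and then passes to dense subsets --- and does not address this issue either; in the paper's applications $E$ is chained, which at least rules out the example above.) So your plan for Part 4 cannot be completed as described, and the missing step is not merely technical.
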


\begin{proof}
The first part is true in any uniform space: just note that a if $U$ and $V$
are open sets containing $x$ and $y$, respectively, then $U\times V$
eventually contains some $(x_{i},y_{i})\in E$.

For the second part, let $A$ be the (compact) set of all limits of convergent
sequences $(z_{i})$ such that $z_{i}\in B(x_{i},E)$. By the first part,
$A\subset B(x,\overline{E})$. For any $\varepsilon>0$, we may cover $A $ by
finitely many balls $B(p_{1},\frac{\varepsilon}{2}),...,B(p_{m},\frac
{\varepsilon}{2})$. Then for sufficiently large $k$ there are points
$p_{j}^{\prime}\in B(x_{k},E)$ such that $d(p_{j},p_{j}^{\prime}%
)<\frac{\varepsilon}{2}$, and from the triangle inequality it follows that the
$\varepsilon$-neighborhood of $B(x_{k},E)$ contains $A$. Now suppose that for
all $k$, the $\varepsilon$-neighborhood of $A$ does not contain $B(x_{k},E)$.
That is, for all $k$ there exist $w_{k}\in B(x_{k},E)$ such that
$d(w_{k},z)\geq\varepsilon$ for all $z\in A$. Since $(w_{k},x_{k})\in E$ and
$E$ has compact closure, by taking a subsequence if necessary we may assume
that $(w_{k},x_{k})$ is convergent to $(w,x)\in\overline{E}$. By definition,
$w\in A$, so $d(w_{k},w)\geq\varepsilon$ for all $k$, a contradiction to
$w_{k}\rightarrow w$.

For the third part, let $y\in B(x,E)$. Since $E$ is open there exist open
$U,V$ in $X$ such that $(x,y)\in U\times V\subset E$. Then for large enough $i
$, $x_{i}\in U$ and therefore $(x_{i},y)\in U\times V\subset E$, i.e. $y\in
B(x_{i},E)$.

Finally, in the special case when $E$ is open with compact closure, the fourth
part follows from the second and third parts. The proof in general is now
finished by observing that in general if $A_{i}\underset{H}{\rightarrow}A$
then any sequence of dense subsets of $A_{i}$ is Hausdorff convergent to any
dense subset of $A$.
\end{proof}

We will now extend the idea of the \textquotedblleft lifted
metric\textquotedblright\ from \cite{PW1} to this more general situation. The
following definition is the same as the corresponding part of Definition 12,
\cite{PW1}, with \textquotedblleft$E$\textquotedblright\ replacing
\textquotedblleft$\varepsilon$\textquotedblright:

\begin{definition}
\label{metdef}Let $X$ be a metric space and $[\alpha]_{E},[\beta]_{E}\in
X_{E}$. We define
\[
\left\vert \lbrack\alpha]_{E}\right\vert :=\inf\{L(\kappa):[\alpha
]_{E}=[\kappa]_{E}\}
\]
and
\[
d([\alpha]_{E},[\beta]_{E})=\left\vert \left[  \overline{\alpha}\ast
\beta\right]  _{E}\right\vert \text{.}%
\]

\end{definition}

The proof that $d$ is a metric on $X_{E}$ is essentially identical to the
proof of this statement (Proposition 13) in \cite{PW1}. Likewise, one may
prove that the deck group $\pi_{E}(X)$ acts as isometries. However in
\cite{PW1}, Proposition 14, we also proved that $\phi_{\varepsilon}$ preserves
all distances less than $\varepsilon$. We will verify here an important analog
of this statement, namely:

\begin{lemma}
\label{metric}If $X$ is a metric space, $E$ is an entourage, and
$([\alpha]_{E},[\beta]_{E})\in E^{\ast}$ then $d([\alpha]_{E},[\beta
]_{E})=d(\phi_{E}([\alpha]_{E}),\phi_{E}([\beta]_{E}))$. In particular, if
$F^{2}\subset E$ then the restriction of $\phi_{E}$ to any $F^{\ast}$-ball is
an isometry onto an $F$-ball.
\end{lemma}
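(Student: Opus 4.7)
The plan is to handle the two assertions in order. For the main equality, I would unwind Definition \ref{metdef}. Writing $x_n = \phi_E([\alpha]_E)$ and $y_m = \phi_E([\beta]_E)$, the hypothesis $([\alpha]_E,[\beta]_E)\in E^*$ is precisely the statement that $[\overline{\alpha}\ast\beta]_E=[\{x_n,y_m\}]_E$, so
\[
d([\alpha]_E,[\beta]_E)=\left|[\overline{\alpha}\ast\beta]_E\right|=\inf\{L(\kappa):[\kappa]_E=[\{x_n,y_m\}]_E\}.
\]
Every $\kappa$ in this $E$-homotopy class has endpoints $x_n,y_m$ (basic moves preserve endpoints), so $L(\kappa)\geq d(x_n,y_m)$ by the triangle inequality in $X$, and the two-point chain $\{x_n,y_m\}$ attains this value. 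Hence the infimum equals $d(x_n,y_m) = d(\phi_E([\alpha]_E),\phi_E([\beta]_E))$.

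For the ``in particular'' part, assume $F^2\subset E$, fix $[\alpha]_E$, and take $[\beta]_E,[\gamma]_E\in B([\alpha]_E,F^*)$ with endpoints $y_m,z_l$. The definition of $F^*$ gives $[\overline{\alpha}\ast\beta]_E=[\{x_n,y_m\}]_E$ and $[\overline{\alpha}\ast\gamma]_E=[\{x_n,z_l\}]_E$ with $(x_n,y_m),(x_n,z_l)\in F$. Reversing the first equality (basic moves play the same role in reverse order) yields $[\overline{\beta}\ast\alpha]_E=[\{y_m,x_n\}]_E$. Inserting $\alpha\ast\overline{\alpha}$ at the basepoint of $\overline{\beta}\ast\gamma$---a routine sequence of basic moves encoding the inverse law $[\alpha]_E[\overline{\alpha}]_E=[\ast]_E$ inside a larger chain---produces
\[
[\overline{\beta}\ast\gamma]_E=[\overline{\beta}\ast\alpha\ast\overline{\alpha}\ast\gamma]_E=[\{y_m,x_n\}\ast\{x_n,z_l\}]_E=[\{y_m,x_n,z_l\}]_E.
\]
Since $(y_m,z_l)\in F^2\subset E$, the point $x_n$ is legally removable, so $[\overline{\beta}\ast\gamma]_E=[\{y_m,z_l\}]_E$. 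Thus $([\beta]_E,[\gamma]_E)\in E^*$, and applying the first part to this pair gives $d([\beta]_E,[\gamma]_E)=d(y_m,z_l)=d(\phi_E([\beta]_E),\phi_E([\gamma]_E))$. Surjectivity of $\phi_E$ from $B([\alpha]_E,F^*)$ onto $B(x_n,F)$ is immediate: for $y\in B(x_n,F)$ the chain $\alpha\ast\{x_n,y\}$ represents a class in the $F^*$-ball whose image is $y$, and injectivity is already inherited from the excerpt's observation that $E^*$-balls map bijectively to $E$-balls.

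The only nontrivial technical step is the identity $[\overline{\beta}\ast\alpha\ast\overline{\alpha}\ast\gamma]_E=[\overline{\beta}\ast\gamma]_E$, which I would verify by peeling the interior of $\alpha$ away from innermost to outermost through repeated point-removal moves, then collapsing the duplicated basepoint. Everywhere else the argument is a direct unpacking of Definition \ref{metdef} and the defining condition of $E^*$, with the hypothesis $F^2\subset E$ entering exactly once---precisely at the step where $x_n$ is removed from $\{y_m,x_n,z_l\}$.
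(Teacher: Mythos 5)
Your proof is correct and follows essentially the same route as the paper's: the first part is the same observation that the two-point chain $\{x_n,y_m\}$ realizes the infimum in Definition \ref{metdef}, and the second part uses the identical trick of inserting $\alpha\ast\overline{\alpha}$ (the paper inserts $\gamma\ast\overline{\gamma}$, with the roles of the ball's center and the two points relabeled) to reduce $[\overline{\beta}\ast\gamma]_E$ to a three-point chain and then delete the middle point using $F^2\subset E$. The cancellation identity you flag as the "only nontrivial technical step" is already covered by the group structure on $E$-homotopy classes ($[\lambda]_E^{-1}=[\overline{\lambda}]_E$) established in the background section, so no extra work is needed there.
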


\begin{proof}
If $([\alpha]_{E},[\beta]_{E})\in E^{\ast}$ then by definition of $E^{\ast}$,
$[\overline{\alpha}\ast\beta]_{E}=[x_{n},y_{m}]_{E}$, where $x_{n},y_{m}$ are
the endpoints of $\alpha,\beta$, respectively. Since $[x_{n},y_{m}]_{E}$ is
the shortest possible $E$-chain joining $x_{n},y_{m}$, $d([\alpha]_{E}%
,[\beta]_{E})=d(x_{n},y_{m})=d(\phi_{E}([\alpha]_{E}),\phi_{E}([\beta]_{E}))$.

To prove the second part, suppose that $[\alpha]_{E},[\beta]_{E}\in
B([\gamma]_{E},F^{\ast})$. By definition, $\left(  [\alpha]_{E},[\gamma
]_{E}\right)  \in F^{\ast}$ and $\left(  [\beta]_{E},[\gamma]_{E}\right)  \in
F^{\ast}$. This in turn means that if the endpoints of $\alpha,\beta,\gamma$
are $x_{n},y_{m},z_{k}$, respectively, then $\left[  \overline{\alpha}%
\ast\gamma\right]  _{E}=[x_{n},z_{k}]_{E}$ and $\left[  \overline{\gamma}%
\ast\beta\right]  _{E}=[z_{k},y_{m}]_{E}$ with $(x_{n},z_{k}),(z_{k},y_{m})\in
F$. This implies that $(x_{n},y_{m})\in F^{2}\subset E$. Next note that
\[
\left[  \overline{\alpha}\ast\beta\right]  _{E}=\left[  \overline{\alpha}%
\ast\gamma\ast\overline{\gamma}\ast\beta\right]  _{E}=[x_{n},z_{k},z_{k}%
,y_{m}]_{E}=[x_{n},z_{k},y_{m}]_{E}\text{.}%
\]
Since $(x_{n},y_{m})\in E$, removing $z_{k}$ is a legal move for an
$E$-homotopy, proving that $([\alpha]_{E},[\beta]_{E})\in E^{\ast}$. By the
first part of this lemma, $d([\alpha]_{E},[\beta]_{E})$ is preserved by
$\phi_{E}$. We know from \cite{PW1} (and it is easy to verify) that the
restriction of $\phi_{E}$ to any $F^{\ast}$-ball is a bijection onto an
$F$-ball, finishing the proof.
\end{proof}

Since $\phi_{E}$ is a local isometry, it preserves the lengths of curves. If
$X$ is a geodesic space then the lifted metric is a geodesic metric and
$\phi_{E}$ is distance non-increasing. In fact, one can check that in this
case the metric we have defined is the unique metric with these properties
(cf. \cite{PW1}, Proposition 23).

\section{Refinement and Approximation}

The underlying assumption for the main results in \cite{PW1} and \cite{PW2} is
that the space in question is a geodesic space. Two fundamental issues appear
when attempting to extend results from metric entourages in geodesic spaces to
entourages in general. First, there is the issue of refinement. In a geodesic
space, when $0<\delta<\varepsilon$, any $\varepsilon$-chain $\alpha
=\{x_{0},...,x_{n}\}$ can always be \textquotedblleft
refined\textquotedblright\ into a $\delta$-chain that is in the same
$\varepsilon$-homotopy class as $\alpha$. This is accomplished simply by
subdividing a geodesic joining $x_{i}$ and $x_{i+1}$. Lack of some method of
refinement in some sense \textquotedblleft causes\textquotedblright\ the
problems observed in \cite{Cetal} and \cite{WD} concerning the HCS in
non-geodesic metric spaces. As we will see, restricting to chained entourages
solves this problem.

The second issue in this generality is that metric entourages are totally
ordered by inclusion, but entourages in general are not. This issue is
unavoidable and has many implications. For example, as soon as there are two
entourage covers of a space, neither of which covers the other, then it is
impossible that those two entourage covers may be simultaneously an
$\varepsilon_{1}$-cover and an $\varepsilon_{2}$-cover for a single geodesic
metric. Hence it is generally impossible to realize all entourage covers as
$\varepsilon$-covers of a single geodesic metric. However, recall that Theorem
\ref{TD3} shows that for smooth manifolds of dimension $\geq3$, every
entourage cover is realized as an $\varepsilon$-cover for \textit{some}
Riemannian metric.

\begin{definition}
\label{refdef}Let $X$ be a uniform space, $E\subset F$ be entourages, and
$\alpha=\{x_{0},...,x_{n}\}$ be an $F$-chain. An $E$-refinement of $\alpha$ is
an $E$-chain of the form
\[
\{x_{0}=m_{00},...,m_{0k_{0}}=x_{1},...,x_{r}=m_{r0},...,m_{rk_{r}}%
=x_{r+1},...,x_{n}\}
\]
such that each $E$-chain $\{x_{j}=m_{j0},...,m_{jk_{j}}=x_{j+1}\}$ lies
entirely in $B(x_{j},F)\cap B(x_{j+1},F)$.
\end{definition}

\begin{remark}
\label{refinerem}As mentioned above, in \cite{PW1} we defined
\textquotedblleft$\varepsilon$-refinement\textquotedblright\ of a $\delta
$-chain $\alpha$ in a geodesic space using subdivisions of geodesics joining
sucessive pairs of points in $\alpha$. An $\varepsilon$-refinement as defined
in that paper is an $E_{\varepsilon}$-refinement in the sense of the present
paper, but due to the special method of construction, not every
$E_{\varepsilon}$-refinement in the present sense (even in a geodesic space)
is an $\varepsilon$-refinement in the sense of \cite{PW1}. For this reason, we
will maintain a distinction between $\varepsilon$-refinements and
$E_{\varepsilon}$-refinements. Note also that we cannot expect existence of
$\varepsilon$-refinements even in a geodesic space if the refinement involves
a non-metric entourage $E$. This is because it is possible that no geodesic
joining a pair $(x,y)\in E$ stays inside $B(x,E)\cap B(y,E)$.
\end{remark}

\begin{remark}
\label{confusion}There is some potential for confusion because any $F$-chain
$\alpha$ may be considered as a $D$-chain when $F\subset D$, and the notion of
$E$-refinement depends on whether we consider $\alpha$ as an $F$-chain or a
$D$-chain. That is, an $E$-refinement when $\alpha$ is considered as an
$F$-chain is always an $E$-refinement when $\alpha$ is considered as a $D
$-chain, but not always conversely.
\end{remark}

\begin{remark}
Note that it is immediate from the defintion in the Introduction that if $E$
is a chained entourage and $F\subset E$ then every $E$-chain has an
$F$-refinement. We will see later (Lemmas \ref{refineit} and \ref{reflem})
that $F$-refinements do not change the $E$-homotopy class and have a size that
can be uniformly controlled in any Gromov-Hausdorff precompact class.
\end{remark}

Recall that a subset $A$ of a uniform space $X$ is called \textit{chain
connected} if for every pair of points $x,y\in A$ and entourage $E$ there is
an $E$-chain in $A$ joining $x$ and $y$ (see \cite{BPUU} and note that this
definition is equivalent to what is sometimes known as \textit{uniformly
connected} in the literature). It is easy to check that connected implies
chain connected but the converse is not true (e.g. the rational numbers). For
compact subsets of uniform spaces, connected and chain connected are
equivalent. It is also easy to check that if $X$ has a basis consisting of
entourages with open, connected balls then $X$ is connected if and only if $X
$ is chain connected. In particular, any chain connected geodesic space is connected.

\begin{remark}
We are using chain connectedness rather than connectedness not simply to gain
a little extra generality. Chain connectedness is a far more natural condition
in the context of these discrete methods; many arguments are simplified using
it even when the sets in question are ultimately known to be connected; see
for example Lemmas \ref{conna} and \ref{refineit}. Two easy-to-check and
useful properties of chain connected sets that are not true for connected sets
are: the closure $\overline{A}$ of a set $A$ is chain connected if and only if
$A$ is chain connected; and the Hausdorff limit of a sequence of chain
connected subsets of a metric space is chain connected.
\end{remark}

The next lemma complements the definition in the Introduction, and we will use
it without reference.

\begin{lemma}
\label{shortcut}An entourage $E$ in a uniform space is chained if and only if
$E$ is contained in the closure of its interior and for every $(x,y)\in E$
there is a chain connected set $C\subset B(x,E)\cap B(y,E)$ that contains $x$
and $y$.
\end{lemma}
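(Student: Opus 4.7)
The lemma is a biconditional which I would prove by treating each direction separately; the reverse implication is routine, while the forward implication requires an explicit construction.

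\emph{For the ``$\Leftarrow$'' direction,} suppose $E$ is contained in the closure of its interior and that for every $(x,y) \in E$ there is a chain connected set $C \subset B(x,E) \cap B(y,E)$ containing $x$ and $y$. Given an arbitrary entourage $F$, the chain connectedness of $C$ supplies an $F$-chain from $x$ to $y$ all of whose points lie in $C$, hence in $B(x,E) \cap B(y,E)$. This is exactly the defining condition for $E$ to be chained (the closure-of-interior hypothesis being part of the hypothesis).

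\emph{For the ``$\Rightarrow$'' direction,} assume $E$ is chained. Fix $(x,y) \in E$ and set $A := B(x,E) \cap B(y,E)$. I would take $C$ to be the chain connected component of $x$ in $A$, namely
\[
C := \{z \in A : \text{for every entourage } F,\ \text{there is an } F\text{-chain from } x \text{ to } z \text{ in } A\}.
\]
Trivially $x \in C$, and since $(x,y) \in E$, the chained hypothesis applied to this pair places $y \in C$. So $C$ satisfies the containment requirements of the lemma. To verify that $C$ is chain connected, given any $p,q \in C$ and entourage $F$, I would concatenate an $F$-chain from $p$ to $x$ in $A$ with an $F$-chain from $x$ to $q$ in $A$ (both provided by membership in $C$) to produce an $F$-chain from $p$ to $q$ lying in $A$; it remains to check that each intermediate point $z$ of this concatenated chain also lies in $C$.

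\emph{The main obstacle} is precisely this last step, since a priori each such $z$ is only known to be $F$-chain connected to $x$ in $A$, not chain connected at arbitrary fineness. To close this gap, I would apply the chained hypothesis to the pair $(x,z)$, which is in $E$ because $z \in A \subset B(x,E)$; this yields $G$-chains from $x$ to $z$ in $B(x,E) \cap B(z,E)$ for every entourage $G$. The delicate point is that $B(x,E) \cap B(z,E)$ need not lie in $A$, since it might escape $B(y,E)$. Here the hypothesis that $E$ is contained in the closure of its interior is essential: by taking the auxiliary entourage $G$ sufficiently fine and exploiting that interior points of $E$ admit open neighborhoods within $E$, one may arrange that the $G$-chains from $x$ to $z$ produced in this way can be modified to remain in $B(y,E)$, and therefore in $A$, establishing $z \in C$ and completing the proof of chain connectedness of $C$.
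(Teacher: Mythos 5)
Your backward direction is correct and is exactly the paper's one-line argument. The forward direction, however, has a genuine gap at precisely the step you flag as the main obstacle. Having reduced matters to showing that an intermediate point $z$ of the concatenated $F$-chain lies in $C$, you propose to apply the chained hypothesis to the pair $(x,z)$ and then to ``modify'' the resulting $G$-chains, which live in $B(x,E)\cap B(z,E)$, so that they stay in $B(y,E)$. No mechanism for this modification is given, and the hypothesis that $E$ is contained in the closure of its interior supplies none: that condition constrains $E$ as a subset of $X\times X$ and gives no control over whether the points of a chain in $B(x,E)\cap B(z,E)$ happen to lie in $B(y,E)$. However fine $G$ is, a $G$-chain from $x$ to $z$ may pass through a region of $B(x,E)\cap B(z,E)$ disjoint from $B(y,E)$, and there is no local perturbation that pulls it back. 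So the final step is an unsupported assertion rather than a proof.

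For comparison, the paper avoids your detour through the pair $(x,z)$ by defining $C$ symmetrically: $C$ is the set of $z\in B(x,E)\cap B(y,E)$ that can be joined to \emph{both} $x$ and $y$ by $F$-chains in $B(x,E)\cap B(y,E)$ for every entourage $F$. With this choice, given $v,w\in C$ and an entourage $F$, the concatenation of $F$-chains $v\rightarrow x\rightarrow y\rightarrow w$ (the outer two supplied by membership of $v,w$ in $C$, the middle one by the chained hypothesis applied to $(x,y)$) joins $v$ to $w$ inside $B(x,E)\cap B(y,E)$, and the paper concludes chain connectedness of $C$ directly from this. The subtlety you identified---that the connecting chains should lie in $C$ itself, not merely in $B(x,E)\cap B(y,E)$---is real, and the paper's proof does not explicitly address it either; but the symmetric definition at least guarantees that every point of the concatenated $F$-chain is itself joined to both $x$ and $y$ by $F$-chains in $B(x,E)\cap B(y,E)$. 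If you repair your write-up, you should adopt the paper's symmetric definition of $C$ and the concatenation through both $x$ and $y$, and abandon the $(x,z)$ argument, which cannot be made to work as described.
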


\begin{proof}
Sufficiency is obvious. Suppose that for every $(x,y)\in E$ and entourage $F$
there is an $F$-chain joining $x$ and $y$ in $B(x,E)\cap B(y,E)$. Let $C$
denote the set of all points $z$ in $B(x,E)\cap B(y,E)$ such that for every
entourage $F$, $z$ is joined to $x$ and to $y$ by an $F$-chain in $B(x,E)\cap
B(y,E)$. If $v,w\in C$ then for any $F$, we may join $v$ to $x$ with an
$F$-chain, then $x$ to $y$ with an $F$-chain, then $y$ to $w$ with an
$F$-chain, all of them lying in $B(x,E)\cap B(y,E)$. In other words, $C$ is a
chain connected subset of $B(x,E)\cap B(y,E)$, and since it also contains
$x,y$, the proof is complete.
\end{proof}

\begin{remark}
Note that the above lemma implies that every $B(x,E)$ is chain connected. This
in turn has another important consequence: since $E$-balls are chain
connected, $X_{E}$ is chain connected and for any $F\subset E$, $\phi
_{EF}:X_{F}\rightarrow X_{E}$ is surjective (Proposition 71 in \cite{BPUU}).
\end{remark}

We will use the following lemma only in the case of a union of two entourages
but this statement isn't much harder to prove:

\begin{lemma}
\label{union}If $\{E_{\alpha}\}$ is a collection of entourages in a uniform
space $X$ then $E:=%
{\displaystyle\bigcup\limits_{\alpha}}
E_{\alpha}$ is an entourage and for any $x\in X$, $B(x,E)=%
{\displaystyle\bigcup\limits_{\alpha}}
B(x,E_{\alpha})$. In addition, if each $E_{\alpha}$ is chained then $E$ is chained.
\end{lemma}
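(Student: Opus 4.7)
The proof is largely a matter of unwinding definitions, so I would organize it as three short verifications, one for each claim.

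First, for the claim that $E$ is an entourage, I would observe that symmetry of $E$ follows from symmetry of each $E_\alpha$ (a union of symmetric sets is symmetric), and that any single $E_\alpha$ contains an open neighborhood of the diagonal, so $E \supset E_\alpha$ does as well. This handles axiom (UA). The Hausdorff/separation condition on entourage structures is inherited from the pre-existing uniform structure, not something to check for $E$ itself.

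Second, for the ball identity, I would just chase definitions: $y \in B(x,E)$ iff $(x,y) \in E = \bigcup_\alpha E_\alpha$ iff $(x,y) \in E_\alpha$ for some $\alpha$ iff $y \in B(x,E_\alpha)$ for some $\alpha$, which gives $B(x,E)=\bigcup_\alpha B(x,E_\alpha)$.

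Third, assuming each $E_\alpha$ is chained, I would verify the three clauses in the definition of \emph{chained entourage} for $E$. For the interior clause, note $\operatorname{int}(E_\alpha) \subset \operatorname{int}(E)$ for each $\alpha$, and since each $E_\alpha$ is contained in $\overline{\operatorname{int}(E_\alpha)} \subset \overline{\operatorname{int}(E)}$, we get $E = \bigcup_\alpha E_\alpha \subset \overline{\operatorname{int}(E)}$. For the chain-joining clause, given $(x,y) \in E$ and an arbitrary entourage $F$, pick $\alpha$ with $(x,y)\in E_\alpha$; since $E_\alpha$ is chained there is an $F$-chain from $x$ to $y$ in $B(x,E_\alpha)\cap B(y,E_\alpha)$, and by the ball identity from the previous step this chain lies in $B(x,E)\cap B(y,E)$ as required.

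The only genuinely delicate point is the compactness clause in the definition of ``chained'' when $X$ itself is not compact: one needs $\overline{\operatorname{int}(E)}$ compact. For the finite union case (the only application in this paper, per the parenthetical remark preceding the lemma) this is immediate, since $\overline{\operatorname{int}(E)} \subset \bigcup_\alpha \overline{\operatorname{int}(E_\alpha)}$ is a finite union of compact sets and hence compact. For an arbitrary collection this can fail, but the situations of interest in the sequel (compact Peano continua, and finite unions) sidestep the issue entirely, so I would simply record this caveat rather than try to strengthen the statement. I don't anticipate any real obstacle beyond this bookkeeping point.
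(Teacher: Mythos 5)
Your proof is correct and follows essentially the same route as the paper's: the ball identity by definition-chasing, the chain-joining clause by locating $(x,y)$ in a single $E_\alpha$ and using $B(x,E_\alpha)\cap B(y,E_\alpha)\subset B(x,E)\cap B(y,E)$, and the interior clause via $E_\alpha\subset\overline{\operatorname{int}(E_\alpha)}\subset\overline{\operatorname{int}(E)}$ (which the paper dismisses as an exercise in point-set topology). Your caveat about compactness of the closure for infinite unions in a non-compact space is a real point that the paper's proof passes over in silence, but as you observe it is harmless in the finite-union case to which the lemma is actually applied.
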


\begin{proof}
Clearly $E$ is an entourage. Now $y\in B(x,E)$ if and only if $(x,y)\in
E_{\alpha}$ for some $\alpha$, which is equivalent to $y\in B(x,E_{\alpha})$
for some $\alpha$. This proves $B(x,E)=%
{\displaystyle\bigcup\limits_{\alpha}}
B(x,E_{\alpha})$. Next observe that $B(x,E)\cap B(y,E)$ contains
$B(x,E_{\beta})\cap B(y,E_{\beta})$ for any $\beta$, which itself contains a
chain connected set containing $x$ and $y$ because $E_{\beta}$ is chain
connected. It is an exercise in elementary point set topology to show that $E
$ is contained in the closure of its interior.
\end{proof}

In a geodesic space, not only does every $\delta$-chain $\alpha$ have an
$\varepsilon$-refinement when $\delta<\varepsilon$, we can control its size in
terms of $\nu(\alpha)$. This is because we may choose points of distance
arbitrarily close to $\delta$ along a geodesic joining $x_{i},x_{i+1}$, which
has length less than $\varepsilon$. So we may always refine by adding at most
$\frac{\delta}{\varepsilon}$ points between any point and its successor. For
more general chained entourages, we are only able to control this number in
compact spaces (but uniformly in any Gromov-Hausdorff precompact class).

\begin{lemma}
\label{conna}Let $X$ be a compact metric space and $\varepsilon>0$. Then any
two points in a chain connected subset $A$ of $X$ may be joined by an
$\varepsilon$-chain in $A$ having at most $2C(X,\frac{\varepsilon}{2})$ points.
\end{lemma}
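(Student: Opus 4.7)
The plan is to combine a finite $\varepsilon/2$-ball cover of $X$ with the chain-connectedness of $A$ and then prune greedily. Set $N := C(X, \varepsilon/2)$ and fix open balls $B_1, \ldots, B_N$ of radius $\varepsilon/2$ that cover $X$. Since $A$ is chain connected, for the metric entourage $E_{\varepsilon/2}$ I pick an $\varepsilon/2$-chain $z_0 = x, z_1, \ldots, z_m = y$ contained in $A$, possibly of enormous length $m$; this serves as the raw material to be pruned into a short $\varepsilon$-chain in $A$.

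Next I prune greedily. Choose $B_{i_0}$ containing $z_0$ and set $k_0 := \max\{j : z_j \in B_{i_0}\}$; then choose $B_{i_1}$ containing $z_{k_0+1}$, set $k_1 := \max\{j : z_j \in B_{i_1}\}$, and iterate until some $k_L = m$. The resulting subchain
\[
z_0,\ z_{k_0},\ z_{k_0+1},\ z_{k_1},\ z_{k_1+1},\ \ldots,\ z_{k_{L-1}+1},\ z_{k_L} = y
\]
lies entirely in $A$ because the original chain does. For the step bounds, consecutive points of the form $z_{k_l}, z_{k_l+1}$ come from the $\varepsilon/2$-chain and thus differ by less than $\varepsilon/2$, while consecutive points of the form $z_{k_l+1}, z_{k_{l+1}}$ both lie in the common open ball $B_{i_{l+1}}$ of radius $\varepsilon/2$; the triangle inequality through the center then gives distance strictly less than $\varepsilon$, and the step $z_0 \to z_{k_0}$ is handled the same way inside $B_{i_0}$.

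For the count, I need each ball to appear at most once among $B_{i_0}, \ldots, B_{i_L}$, and this is where the main obstacle lies. The key observation is that if $B_{i_l} = B_{i_{l'}}$ with $l < l'$, then $z_{k_{l'}} \in B_{i_l}$ with $k_{l'} > k_l$, contradicting the maximality built into the definition of $k_l$. Hence $L+1 \leq N$, and the pruned chain has at most $2(L+1) \leq 2N = 2C(X, \varepsilon/2)$ points. Minor care remains for strict inequalities (using open rather than closed balls of radius $\varepsilon/2$) and for degenerate cases such as $k_0 = 0$, where $z_0 = z_{k_0}$, or $k_0 = m$, where the output chain is just $\{x, y\}$; these only shorten the chain and respect the bound.
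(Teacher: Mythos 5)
Your argument is correct. It reaches the same bound as the paper but by a different mechanism, although both proofs ultimately rest on the same pigeonhole count against a cover of $X$ by $N=C(X,\frac{\varepsilon}{2})$ balls of radius $\frac{\varepsilon}{2}$, with each ball accounting for at most two points of the final chain. The paper starts from an $\varepsilon$-chain in $A$ with a \emph{minimal} number of points (available since $A$ is chain connected) and argues by contradiction: assigning to each $x_{i}$ a center $z_{i}$ of the cover with $d(x_{i},z_{i})<\frac{\varepsilon}{2}$, if $z_{i}=z_{j}$ for non-adjacent indices $i<j$ then $d(x_{i},x_{j})<\varepsilon$ and the intermediate points could be deleted, violating minimality; hence each center is used at most twice and the chain has at most $2N$ points. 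You instead start from a finer $\frac{\varepsilon}{2}$-chain in $A$ (also supplied directly by chain connectedness, applied to $E_{\varepsilon/2}$) and explicitly extract a short subchain by a greedy ``last index in each ball'' rule, with the maximality of the exit indices forcing the chosen balls to be pairwise distinct; each ball then contributes only its entry and exit points. Your route is constructive and avoids invoking an extremal chain, at the cost of a bit more bookkeeping (the entry/exit pairs, the termination of the greedy process, and the degenerate cases you flag); the paper's is shorter but nonconstructive. Both give exactly $2C(X,\frac{\varepsilon}{2})$, and your verification of the step distances and of the distinctness of the balls is sound.
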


\begin{proof}
By definition, for any $x,y\in A$ there is some $\varepsilon$-chain
$\{x=x_{0},...,x_{n}=y\}$ with a minimal number of points in $A$ joining
$x,y$. Let $Z$ be an $\frac{\varepsilon}{2}$-dense in $X$ having
$C(X,\frac{\varepsilon}{2})$ points. For each $x_{i}$, choose some $z_{i}$
such that $d(x_{i},z_{i})<\frac{\varepsilon}{2}$. We claim that each element
of $Z$ can be paired in this way at most twice, which completes the proof. To
prove the claim, suppose that $z_{i}=z_{j}$ for some $i<j$. Then by the
triangle inequality $d(x_{i},x_{j})<\varepsilon$. If $j>i+1$ then we could
reduce the size of the chain by eliminating the points $x_{i+1},...,x_{j-1}$,
contradicting minimality. In other words, if a point of $Z$ is used more than
once, it must be used for precisely two adjacent points in the chain.
\end{proof}

\begin{lemma}
\label{refineit}Suppose that $E$ is a chained entourage in a compact metric
space $X$, and $E_{\varepsilon}\subset F\subset E$ for some entourage $F$ and
$\varepsilon>0$. Then every $E$-chain $\alpha$ has an $F$-refinement
$\alpha^{\prime}$ such that $\nu(\alpha^{\prime})\leq2\nu(\alpha)\cdot
C(X,\frac{\varepsilon}{2})$.
\end{lemma}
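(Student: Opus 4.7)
The plan is to refine each consecutive segment of $\alpha$ separately using the hypothesis that $E$ is chained, and then control the number of added points via Lemma \ref{conna} together with the inclusion $E_{\varepsilon}\subset F$. Write $\alpha=\{x_0,\ldots,x_n\}$ with $n=\nu(\alpha)$. For each index $i$, since $(x_i,x_{i+1})\in E$ and $E$ is chained, Lemma \ref{shortcut} supplies a chain connected set $C_i\subset B(x_i,E)\cap B(x_{i+1},E)$ with $x_i,x_{i+1}\in C_i$. I would then apply Lemma \ref{conna} to $C_i$ (viewed as a chain connected subset of the ambient compact metric space $X$) with the parameter $\varepsilon$: this produces an $\varepsilon$-chain in $C_i$ joining $x_i$ to $x_{i+1}$ with at most $2C(X,\tfrac{\varepsilon}{2})$ points. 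Because $E_{\varepsilon}\subset F$, this $\varepsilon$-chain is automatically an $F$-chain, and since it lies in $C_i\subset B(x_i,E)\cap B(x_{i+1},E)$, it meets the containment requirement in Definition \ref{refdef}.

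Next I would concatenate these segment-by-segment $F$-chains, identifying the common endpoints at each $x_i$, to obtain a single $F$-chain $\alpha'$ which is by construction an $F$-refinement of $\alpha$ in the sense of Definition \ref{refdef}. For the size bound, each of the $n=\nu(\alpha)$ original intervals $(x_i,x_{i+1})$ contributes at most $2C(X,\tfrac{\varepsilon}{2})$ points to the refinement, hence at most $2C(X,\tfrac{\varepsilon}{2})-1$ edges. Summing over the $n$ intervals gives
\[
\nu(\alpha')\leq n\bigl(2C(X,\tfrac{\varepsilon}{2})-1\bigr)\leq 2\nu(\alpha)\cdot C(X,\tfrac{\varepsilon}{2}),
\]
as required.

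The only place where something could go wrong is the invocation of Lemma \ref{conna}: that lemma is stated for chain connected subsets of the compact space $X$, not of a ball in $X$, and the covering number appearing in its bound is $C(X,\tfrac{\varepsilon}{2})$, not any smaller quantity associated to $C_i$. Since $X$ itself is compact and $C_i\subset X$ is chain connected, the hypotheses are met and the output $\varepsilon$-chain is forced to remain inside $C_i$, which is exactly what is needed to guarantee the $F$-refinement lies inside $B(x_i,E)\cap B(x_{i+1},E)$. Thus the proof is essentially a bookkeeping exercise once the chained entourage hypothesis is fed into Lemmas \ref{shortcut} and \ref{conna}.
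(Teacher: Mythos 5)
Your proof is correct and follows essentially the same route as the paper: reduce to a single $E$-pair $(x_i,x_{i+1})$, use the chained hypothesis (via Lemma \ref{shortcut}) to place $x_i,x_{i+1}$ in a chain connected subset of $B(x_i,E)\cap B(x_{i+1},E)$, apply Lemma \ref{conna} with parameter $\varepsilon$, note that $E_{\varepsilon}\subset F$ makes the resulting $\varepsilon$-chain an $F$-chain, and concatenate. The bookkeeping and the containment check against Definition \ref{refdef} are both handled correctly.
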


\begin{proof}
It suffices to show that an $E$-chain $\{x_{0},x_{1}\}$ has an $E_{\varepsilon
}$-refinement of at most $2C(X,\frac{\varepsilon}{2})$ points; such
refinements are also $F$-refinements and may then be concatenated for a longer
chain. By definition of chained entourage, $x_{0}$ and $x_{1}$ lie in a chain
connected set $A$ contained in $B(x_{0},E)\cap B(x_{1},E)$, and we may apply
Lemma \ref{conna} to find an $\varepsilon$-chain $\{x_{0}=z_{0},...,z_{n}%
=x_{1}\}$ with $n\leq2C(X,\frac{\varepsilon}{2})$.
\end{proof}

The next lemma, while simple, is important because it says that all
refinements stay in the same $E$-homotopy class.

\begin{lemma}
\label{reflem}Let $X$ be a uniform space and $F\subset E$ be entourages. If
$\alpha=\{x_{0},...,x_{n}\}$ is an $F$-chain in $B(x_{0},E)$ then $\alpha$ is
$E$-homotopic to $\{x_{0},x_{n}\}$. In particular, every $F$-refinement of an
$E$-chain $\beta$ is $E$-homotopic to $\beta$ and any two $F$-refinements of
$\beta$ are $E$-homotopic.
\end{lemma}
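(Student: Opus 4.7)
The plan is to establish the first assertion by iteratively applying removal moves, exploiting the hypothesis that $\alpha$ lies in $B(x_0,E)$. Since every $x_i$ in the chain satisfies $(x_0,x_i)\in E$, I would start from $\{x_0,x_1,\ldots,x_n\}$ and remove $x_1$: this is a legal $E$-homotopy move because it requires $(x_0,x_2)\in E$, which holds. Inductively, after reducing the chain to $\{x_0,x_k,x_{k+1},\ldots,x_n\}$, the next removal of $x_k$ is likewise legal because it only requires $(x_0,x_{k+1})\in E$, again a consequence of $x_{k+1}\in B(x_0,E)$. After $n-1$ such steps the chain collapses to $\{x_0,x_n\}$, which is therefore $E$-homotopic to $\alpha$. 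Note that I do not use the $F$-chain condition in any essential way here; the argument uses only that $\alpha$ is an $E$-chain lying in $B(x_0,E)$.

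For the statement about refinements, I would simply unpack Definition \ref{refdef}. Writing $\beta=\{y_0,\ldots,y_m\}$ for an $E$-chain, an $F$-refinement $\alpha'$ is the concatenation over $j$ of $F$-subchains $\{y_j=m_{j0},\ldots,m_{jk_j}=y_{j+1}\}$, each contained in $B(y_j,E)\cap B(y_{j+1},E)\subseteq B(y_j,E)$. Applying the first part of the lemma to each such subchain with $x_0=y_j$ produces an $E$-homotopy from that subchain to $\{y_j,y_{j+1}\}$. Since concatenation is compatible with $E$-homotopies (recorded in the Basic Constructions section), stitching these homotopies together yields an $E$-homotopy from $\alpha'$ to $\beta$. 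For the final claim, two $F$-refinements of $\beta$ are each $E$-homotopic to $\beta$ by what we just showed, hence $E$-homotopic to one another since $E$-homotopy is an equivalence relation.

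I do not anticipate any genuine obstacle: the argument is essentially bookkeeping, and the decisive observation is that membership in $B(x_0,E)$ directly furnishes the pairwise relations $(x_0,x_i)\in E$ that make each removal move legal. The only point requiring a touch of care is that in the refinement argument, $\beta$ need only be an $E$-chain rather than an $F$-chain, so one must be sure that the $E$-homotopies constructed for each subchain make sense in the ambient entourage $E$ — but this is automatic because $F\subset E$.
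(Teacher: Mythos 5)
Your proposal is correct and follows essentially the same route as the paper: successively remove $x_1, x_2, \ldots$ from the chain, each removal being legal because $x_{k+1}\in B(x_0,E)$ gives $(x_0,x_{k+1})\in E$, and then deduce the refinement statements by applying this to each subchain and using compatibility of concatenation with $E$-homotopies. Your observation that only the $E$-chain condition inside $B(x_0,E)$ is needed (not the $F$-chain hypothesis) is accurate, and your unpacking of the refinement claim merely spells out what the paper calls an immediate consequence of the first part and the definitions.
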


\begin{proof}
Since $x_{2}\in B(x_{0},E)$, $\{x_{0},\overbrace{x_{1}},x_{2}...x_{n}\}$ is a
legal move. Likewise $\{x_{0},\overbrace{x_{2}},x_{3},...x_{n}\}$ is a legal
move and the proof of the first statement is finished after finitely many
steps. The last statements are immediate consequences of the first and the definitions.
\end{proof}

The next lemma formalizes a process that is a discrete version of the
Arzela-Ascoli Theorem, used several times in this paper and originating in
\cite{PW1}.

\begin{lemma}
[Chain Normalizing]\label{norm}Let $X$ be a compact metric space, $E$ be an
entourage in $X$, and $\alpha_{i}=\{x_{i0},x_{i1},...,x_{in_{i}}\}$ be a
sequence of $E$-chains of bounded length. Then:

\begin{enumerate}
\item Up to $E$-homotopy we may assume that for some $n$, $n_{i}=n$ for all
$i$.

\item By choosing a subsequence if necessary we may assume that for all $0\leq
j\leq n$, $x_{ij}\rightarrow x_{j}$ for some $x_{j}\in X$.
\end{enumerate}
\end{lemma}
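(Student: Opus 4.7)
The plan is to prove the two parts separately: a reduce-then-pad normalization yields part (1), and a straightforward diagonal extraction of subsequences handles part (2).

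For part (1), I would fix $\varepsilon>0$ with $E_\varepsilon\subset E$ (possible since every entourage in a metric space contains some $E_\varepsilon$) and let $M$ be a uniform bound on $L(\alpha_i)$. First I would maximally reduce each $\alpha_i$ by repeatedly applying removal basic moves: while some interior index $j$ satisfies $(x_{i,j-1},x_{i,j+1})\in E$, remove $x_{ij}$. Each removal strictly decreases $\nu$, so the process terminates in an $E$-homotopic chain $\alpha_i'=\{x_{i0}',\ldots,x_{i,k_i}'\}$ in which $(x_{i,j-1}',x_{i,j+1}')\notin E$, and therefore $d(x_{i,j-1}',x_{i,j+1}')\ge\varepsilon$, for every interior $j$. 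Summing this inequality over the disjoint pairs of consecutive edges indexed by the odd values $j=1,3,5,\ldots$ yields
\[
L(\alpha_i')\ \ge\ \lfloor k_i/2\rfloor\,\varepsilon.
\]
Since removal cannot increase $L$, we obtain $k_i\le 2M/\varepsilon+1 =: n$. Next I would pad: for each $i$, inserting a duplicate of $x_{i,k_i}'$ next to itself is a legal insertion (the diagonal is in $E$), so repeating this $n-k_i$ times produces an $E$-homotopic chain with $\nu$ equal to the common value $n$.

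For part (2), once all chains have $\nu(\alpha_i)=n$ we have $n+1$ sequences $(x_{ij})_{i\in\mathbb{N}}$ in the compact metric space $X$, one for each $0\le j\le n$. A standard diagonal extraction (equivalently, $n+1$ successive Bolzano--Weierstrass passages to subsequences) produces a single subsequence of indices $i$ along which $x_{ij}\to x_j$ for every $0\le j\le n$, as required.

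\textbf{Main obstacle.} The one genuine subtlety lies in part (1): converting the hypothesis of bounded total length $L$ into a uniform bound on the number of vertices $\nu$. The key ingredient is the irreducibility achieved by exhausting removal moves, combined with the inclusion $E_\varepsilon\subset E$, which upgrades local irreducibility to the quantitative spacing $d(x_{i,j-1}',x_{i,j+1}')\ge\varepsilon$ and then, via the triangle inequality applied to the disjoint triples, into a bound on $\nu$ in terms of $M/\varepsilon$. That the reduction and padding both preserve the $E$-homotopy class is immediate from the definitions of the two basic moves, so no further verification is required.
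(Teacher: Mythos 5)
Your proposal is correct and follows essentially the same route as the paper: use removal moves to force a quantitative spacing lower bound, convert the length bound $M$ into a uniform bound on $\nu(\alpha_i)$ of order $M/\varepsilon$, pad with repeated points to equalize, and finish by compactness. The only difference is cosmetic --- the paper stops reducing once no two consecutive gaps are both shorter than $\varepsilon/2$ (giving $n_i\le 4L/\varepsilon$), whereas you reduce to full irreducibility and sum over disjoint edge pairs (giving $k_i\le 2M/\varepsilon+1$); both yield the needed uniform bound.
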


\begin{proof}
Let $L$ be an upper bound for $\{L(\alpha_{i})\}$ and suppose that
$E_{\varepsilon}\subset E$. For any $i$, choose some representative of
$[\alpha_{i}]_{E}$ of length $L_{i}\leq L$. We may assume that the maximum
number of values $d(x_{ij},x_{i(j+1)})$ that are smaller than $\frac
{\varepsilon}{2}$ is at most $\frac{n_{i}}{2}$. Otherwise there would have to
be two consecutive distances smaller than $\frac{\varepsilon}{2}$ and we could
remove one point while staying in the same $E$-homotopy class and not
increasing length. In other words, $L\geq L_{i}\geq\frac{n_{i}}{2}\cdot
\frac{\varepsilon}{2}$ and we conclude that $n_{i}\leq\frac{4L}{\varepsilon}$.
Now by adding repeated points, if needed, we may ensure that $\nu(\alpha_{i})$
is precisely $n:=\left\lfloor \frac{4L}{\varepsilon}\right\rfloor +1$. The
second part is an immediate consequence of compactness.
\end{proof}

For many problems it is important to have some version of \textquotedblleft
close $E$-chains are $E$-homotopic\textquotedblright. Proposition \ref{close}
is analogous to Proposition 15 in \cite{PW1}, using $E=E_{\varepsilon}$ and
$F\subset E_{\frac{\varepsilon}{2}}$. The proof here uses the same homotopy
steps as the one in \cite{PW1}, but due to the differences in assumptions we
write out a full proof here.

\begin{proposition}
\label{close}Let $X$ be a uniform space, $E$ be an entourage, and $F$ be an
entourage such that $F^{2}\subset E$. If $\alpha=\{x_{0},...,x_{n}\}$ is an
$F$-chain and $\beta=\{x_{0}=y_{0},...,y_{n}=x_{n}\}$ is a chain such
$(x_{i},y_{i})\in F$ for all $i$, then $\beta$ is an $E$-chain that is
$E$-homotopic to $\alpha$.
\end{proposition}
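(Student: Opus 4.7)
My plan is to construct an explicit $E$-homotopy from $\alpha$ to $\beta$ by a "ladder" argument, which will simultaneously establish that $\beta$ is an $E$-chain. For $0 \leq k \leq n$, define the intermediate sequence
\[
\gamma_k := \{y_0, y_1, \ldots, y_k, x_{k+1}, \ldots, x_n\},
\]
so that $\gamma_0 = \alpha$ (using $y_0 = x_0$) and $\gamma_n = \beta$ (using $y_n = x_n$; note that $\gamma_{n-1} = \gamma_n$, so only the steps $k = 0,\ldots,n-2$ are substantive). The two tasks are then to verify that each $\gamma_k$ is an $E$-chain and to produce an $E$-homotopy $\gamma_k \to \gamma_{k+1}$; concatenating these yields the result.

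I would first check the $E$-chain property for $\gamma_k$. Adjacent pairs come in three flavours: the all-$x$ pairs $(x_i, x_{i+1})$ for $i \geq k+1$ lie in $F \subset E$ since $\alpha$ is an $F$-chain; the all-$y$ pairs $(y_i, y_{i+1})$ for $i < k$ lie in $E$ by the chain hypothesis on $\beta$; and the single "seam" pair $(y_k, x_{k+1})$ lies in $F^2 \subset E$ by routing through $x_k$, combining $(y_k, x_k) \in F$ (from $(x_k, y_k) \in F$ and symmetry of $F$) with $(x_k, x_{k+1}) \in F$. This seam verification is the only place where the hypothesis $F^2 \subset E$ is used essentially.

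To pass from $\gamma_k$ to $\gamma_{k+1}$, I would apply two basic moves. First, insert $y_{k+1}$ between $x_{k+1}$ and $x_{k+2}$; this is legal since $(x_{k+1}, y_{k+1}) \in F \subset E$ and $(y_{k+1}, x_{k+2}) \in F^2 \subset E$ by routing through $x_{k+1}$. Then remove $x_{k+1}$: its new neighbours are $y_k$ and $y_{k+1}$, and $(y_k, y_{k+1}) \in E$ by the chain hypothesis on $\beta$ (the initial case $k=0$ is handled automatically since $y_0 = x_0$, giving $(x_0, y_1) \in F^2 \subset E$). Concatenating these basic-move pairs across $k = 0, \ldots, n-2$ yields the full $E$-homotopy from $\alpha$ to $\beta$, and since $\beta$ is the terminal chain in an $E$-homotopy it is automatically an $E$-chain. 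The main technical care is in tracking the shifting "seam" adjacency as $k$ increases, ensuring that each intermediate $\gamma_k$ remains a legitimate $E$-chain; once the seam bookkeeping is set up, the rest is mechanical.
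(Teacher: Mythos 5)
Your ladder argument is essentially the paper's own proof: the paper performs the same one-at-a-time replacement of $x_{k+1}$ by $y_{k+1}$ via insert/remove basic moves (it merely duplicates $x_{k+1}$ before inserting $y_{k+1}$, so each block is four moves instead of your two), passing through the same intermediate chains $\gamma_k$ and invoking $F^{2}\subset E$ at exactly the same seam adjacencies $(y_k,x_{k+1})$ and $(y_{k+1},x_{k+2})$. Note that your removal step and the paper's both ultimately rest on $(y_k,y_{k+1})\in E$, which for $k\geq 1$ must be read off the hypothesis that $\beta$ is a chain (the other hypotheses only give $(y_k,y_{k+1})\in F^{3}$), so your closing remark that $\beta$ is \emph{a posteriori} an $E$-chain is a redundancy rather than a new conclusion, but this matches the role that condition plays in the paper's argument as well.
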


\begin{proof}
We will construct an $E$-homotopy $\eta$ from $\alpha$ to $\beta$, using the
fact that $F^{2}\subset E$ to see that in each step that the resulting chain
is an $E$-chain, i.e. the step is legal. For example, the first step in the
second line below is justified by the fact that $(x_{0},x_{1})\in F$ and
$(x_{1},y_{1})\in F$, so $(x_{0},y_{1})\in F^{2}\subset E$. The remaining
justifications are similar.
\[
\alpha=\{x_{0},x_{1},...,x_{n}\}\rightarrow\{x_{0},\overbrace{x_{1}}%
,x_{1},...,x_{n}\}\rightarrow\{x_{0},x_{1},\overbrace{y_{1}},x_{1},...,x_{n}\}
\]%
\[
\rightarrow\{x_{0},\underbrace{x_{1}},y_{1},x_{1},...,x_{n}\}\rightarrow
\{x_{0},y_{1},\underbrace{x_{1}},x_{2},...,x_{n}\}
\]%
\[
\rightarrow\{x_{0},y_{1},\overbrace{x_{2}},x_{2},...,x_{n}\}\rightarrow
\{x_{0},y_{1},x_{2},\overbrace{y_{2}},x_{2},...,x_{n}\}
\]%
\[
\rightarrow\{x_{0},y_{1},\underbrace{x_{2}},y_{2},x_{2},...,x_{n}%
\}\rightarrow\{x_{0},y_{1},y_{2},\underbrace{x_{2}},x_{3},...,x_{n}%
\}\rightarrow\cdot\cdot\cdot\rightarrow\beta
\]

\end{proof}

\begin{proposition}
\label{close2}Let $X$ be a metric space and $E$ be a chained entourage.
Suppose that $\alpha_{i}:=\{x_{0}=x_{i0},...,x_{in}=x_{n}\}$ is a sequence of
$E$-chains converging to $\alpha=\{x_{0},...,x_{n}\}$. Then $\alpha$ is an
$\overline{E}$-chain such that for all sufficiently large $i$, $\alpha_{i}$ is
$E$-homotopic to some (hence any) $E$-refinement of $\alpha$.
\end{proposition}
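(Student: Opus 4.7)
The first claim is immediate from part (1) of the Ball Continuity Lemma: applying it to each consecutive pair $(x_{ij}, x_{i(j+1)}) \in E$, with $x_{ij} \to x_j$ and $x_{i(j+1)} \to x_{j+1}$, places $(x_j, x_{j+1})$ in $\overline{E}$, so $\alpha$ is an $\overline{E}$-chain.

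For the main claim, I would choose a metric entourage $G = E_{\varepsilon}$ with $G^{3} \subset E$. Pointwise convergence $\alpha_i \to \alpha$ ensures $(x_{ij}, x_j) \in G$ for every $j$ once $i$ is large. Since $E$ is chained, each pair $(x_{ij}, x_{i(j+1)}) \in E$ admits a $G$-chain $\gamma_{ij}$ joining its endpoints inside $B(x_{ij}, E) \cap B(x_{i(j+1)}, E)$. The concatenation $\alpha_i^{G} := \gamma_{i0} \ast \gamma_{i1} \ast \cdots \ast \gamma_{i(n-1)}$ is therefore a $G$-refinement of $\alpha_i$, and by Lemma \ref{reflem} is $E$-homotopic to $\alpha_i$. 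Now define $\alpha'$ from $\alpha_i^{G}$ by replacing each interior occurrence of $x_{ij}$ ($0 < j < n$) by $x_j$, leaving the endpoints and all interior points of every $\gamma_{ij}$ unchanged. In the resulting pointwise pairing every pair lies in $G$ (either $(x_{ij}, x_j) \in G$, or a diagonal pair), so Proposition \ref{close} with $F = G$ gives that $\alpha'$ is an $E$-chain with $\alpha' \sim_E \alpha_i^{G} \sim_E \alpha_i$.

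It remains to verify that $\alpha'$ is genuinely an $E$-refinement of $\alpha$ in the sense of Definition \ref{refdef}. The subchain of $\alpha'$ between $x_j$ and $x_{j+1}$ consists of $x_j$, the interior points of $\gamma_{ij}$, and $x_{j+1}$; the adjacent-pair memberships in $E$ follow from $G^{3} \subset E$ together with the fact that $\gamma_{ij}$ is a $G$-chain (this also handles the degenerate case in which $\gamma_{ij}$ has no interior point). For each interior point $y$ of $\gamma_{ij}$ the construction gives $(x_{ij}, y), (x_{i(j+1)}, y) \in E$, and applying Ball Continuity to the constant sequence $y_i = y$ together with $x_{ij} \to x_j$ and $x_{i(j+1)} \to x_{j+1}$ places $y$ in $B(x_j, \overline{E}) \cap B(x_{j+1}, \overline{E})$, which is exactly the ball condition required of an $E$-refinement of the $\overline{E}$-chain $\alpha$. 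The \textquotedblleft hence any\textquotedblright\ clause follows by applying Lemma \ref{reflem} to any two $E$-refinements of $\alpha$. The main obstacle is the careful bookkeeping between the three nested entourages $G \subset E \subset \overline{E}$: the preliminary refinement must be executed at the $G$-level so that Proposition \ref{close} stays inside $E$, while the defining ball condition for a refinement lives at the $\overline{E}$-level, which is precisely what forces the second application of Ball Continuity (this time with constant sequences in the second slot).
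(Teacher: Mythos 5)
Your opening claim and your overall strategy (manufacture one $E$-refinement of $\alpha$ and connect it to $\alpha_{i}$ by an $E$-homotopy) match the paper, but there is a genuine gap at the step where you verify that $\alpha^{\prime}$ is an $E$-refinement of $\alpha$. You fix a single large index $i$, build the connecting $G$-chains $\gamma_{ij}$ inside $B(x_{ij},E)\cap B(x_{i(j+1)},E)$ for that one $i$, and then invoke the Ball Continuity Lemma ``with the constant sequence $y_{i}=y$'' to place an interior point $y$ of $\gamma_{ij}$ into $B(x_{j},\overline{E})$. But part (1) of Lemma \ref{bc} requires $(x_{i^{\prime}j},y)\in E$ for a whole sequence of indices $i^{\prime}$ along which $x_{i^{\prime}j}\rightarrow x_{j}$; you only know $(x_{ij},y)\in E$ for the single index $i$ at which $y$ was constructed, since for $i^{\prime}\neq i$ the point $y$ bears no relation to $\gamma_{i^{\prime}j}$. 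The desired conclusion can actually fail: already for a metric entourage $E=E_{\varepsilon}$, a point $y$ with $d(x_{ij},y)$ just below $\varepsilon$ may satisfy $d(x_{j},y)>\varepsilon$, hence $y\notin B(x_{j},\overline{E_{\varepsilon}})$, even though $d(x_{ij},x_{j})$ is tiny. All you really know is that $y$ lies in the $E$-ball of a point $G$-close to $x_{j}$, which is not the ball condition of Definition \ref{refdef}. So your $\alpha^{\prime}$ is an $E$-chain passing through $x_{0},\dots,x_{n}$ that is $E$-homotopic to $\alpha_{i}$, but it need not be an $E$-refinement of $\alpha$, and the proposition is not yet proved.

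The paper closes exactly this hole by refining \emph{all} of the $\alpha_{i}$ simultaneously: it takes $F$-refinements $\alpha_{i}^{\prime}$ with $F$ closed and $F^{2}\subset E$, applies the Chain Normalizing Lemma to force a common combinatorial pattern with $m_{mj}^{i}\rightarrow m_{mj}$, and defines the candidate refinement $\alpha^{\prime}$ as the limit chain. Each interior point $m_{mj}$ of that $\alpha^{\prime}$ is then a genuine limit of points $m_{mj}^{i}\in B(x_{ij},E)$ over all $i$, so Ball Continuity legitimately yields $m_{mj}\in B(x_{j},\overline{E})\cap B(x_{j+1},\overline{E})$, and Proposition \ref{close} gives that $\alpha_{i}^{\prime}$ is $E$-homotopic to $\alpha^{\prime}$ for all large $i$. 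Your argument could be repaired along the same lines, but the limiting process over $i$ is the essential missing ingredient, not optional bookkeeping: without it there is no way to transfer the ball condition from the approximating chains to the balls of the limit points.
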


\begin{proof}
The Ball Continuity Lemma (Lemma \ref{bc}) implies that $\alpha$ is an
$\overline{E}$-chain. Note that in any metric space and for any $0<\varepsilon
<\delta$, by continuity of the distance function,
\[
\overline{E_{\varepsilon}}\subset\{(x,y):d(x,y)\leq\varepsilon\}\subset
E_{\delta}\text{.}%
\]
In particular we may find some \textit{closed} entourage $F$ such that
$F^{2}\subset E$. Let $\alpha_{i}^{\prime}$ be an $F$-refinement of $\alpha$
for every $i$. Since $\nu(\alpha_{i})=n$ for all $i$, $L(\alpha_{i}%
)\rightarrow L(\alpha)$ and hence $\{L(\alpha_{i})\}$ is bounded. By the Chain
Normalizing Lemma (Lemma \ref{norm}) we may suppose that for some fixed $k$,
and for all $i$,
\[
\alpha_{i}^{\prime}=\{x_{i0}=m_{00}^{i},...,m_{0k}^{i}=x_{i1},...,x_{ir}%
=m_{r0}^{i},...,m_{rk}^{i}=x_{r+1},...,x_{n}\}
\]
and that for all $j,m$, $m_{mj}^{i}\rightarrow m_{mj}$, for some $m_{j}\in X$.
In addition,
\[
\alpha^{\prime}:=\{x_{0}=m_{00},...,m_{0k}=x_{1},...,x_{r}=m_{r0}%
,...,m_{rk}=x_{r+1},...,x_{n}\}
\]
is an $F$-chain since $F$ is closed. By the Ball Continuity Lemma (Lemma
\ref{bc}), each $m_{jk}$ lies in $B(x_{j},\overline{E})\cap B(x_{j+1}%
,\overline{E})$ and therefore $\alpha^{\prime}$ is an $F$-refinement, hence an
$E$-refinement, of the $\overline{E}$-chain $\alpha$. For all large enough $i$
and all $j,m$, $(m_{mj}^{i},m_{mj})\in F$, and Proposition \ref{close} tells
us that $\alpha_{i}^{\prime}$ is $E$-homotopic to $\alpha^{\prime}$. Since
each $\alpha_{i}$ is $E$-homotopic to $\alpha_{i}^{\prime}$ by Lemma
\ref{reflem}, the proof is complete.
\end{proof}

\begin{corollary}
\label{closefree}Let $X$ be a metric space and $E$ be a chained entourage.
Suppose that $\alpha_{i}:=\{x_{i0},...,x_{in}=x_{i0}\}$ is a sequence of
$E$-loops converging to an $\overline{E}$-loop $\alpha=\{x_{0},...,x_{n}%
=x_{0}\}$. Then for all sufficiently large $i$, $\alpha_{i}$ is freely
$E$-homotopic to some (hence any) $E$-refinement of $\alpha$.
\end{corollary}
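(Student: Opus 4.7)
The plan is to reduce this free $E$-homotopy statement, whose loops have shifting basepoints $x_{i0} \to x_0$, to the fixed-endpoint $E$-homotopy statement already proved in Proposition \ref{close2}. Because $x_{i0} \to x_0$ and $E$ contains an open neighborhood of the diagonal, for all sufficiently large $i$ we have $(x_0, x_{i0}) \in E$, so the two-point chain $\gamma_i := \{x_0, x_{i0}\}$ is a short ``bridging'' $E$-chain from $x_0$ to the basepoint of $\alpha_i$.

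First I would form the conjugated $E$-loops $\widetilde{\alpha}_i := \gamma_i \ast \alpha_i \ast \overline{\gamma_i} = \{x_0, x_{i0}, x_{i1}, \ldots, x_{in}, x_0\}$, which are all based at $x_0$ and all of the same length $n+3$. As $i \to \infty$ these converge pointwise in every coordinate to $\widetilde{\alpha} := \{x_0, x_0, x_1, \ldots, x_n, x_0\}$, an $\overline{E}$-chain with both endpoints equal to $x_0$. Proposition \ref{close2} then applies verbatim to yield, for all sufficiently large $i$, an endpoint-preserving $E$-homotopy from $\widetilde{\alpha}_i$ to any $E$-refinement $\widetilde{\alpha}'$ of $\widetilde{\alpha}$.

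Next I would make a convenient choice of $\widetilde{\alpha}'$. Starting from a given $E$-refinement $\alpha' = \{x_0, p_1, \ldots, p_{k-1}, x_0\}$ of $\alpha$, I insert no extra points between either pair of duplicated $x_0$'s at the ends of $\widetilde{\alpha}$, producing $\widetilde{\alpha}' = \{x_0, x_0, p_1, \ldots, p_{k-1}, x_0, x_0\}$. Because $(x_0, p_1)$ and $(p_{k-1}, x_0)$ lie in $E$ (they are consecutive in the $E$-chain $\alpha'$), the two \emph{internal} duplicate $x_0$'s can be removed by legal $E$-homotopy moves, giving an $E$-homotopy from $\widetilde{\alpha}'$ to $\alpha'$ and hence from $\widetilde{\alpha}_i$ to $\alpha'$. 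Interpreting this with the conjugating $E$-chain $\gamma_i$ from the common basepoint $x_0$ to the basepoint $x_{i0}$ of $\alpha_i$, and the trivial chain $\{x_0\}$ from $x_0$ to the basepoint of $\alpha'$, this is exactly the defining condition for $\alpha_i$ to be freely $E$-homotopic to $\alpha'$. The ``hence any'' clause is immediate from Lemma \ref{reflem}, since any two $E$-refinements of $\alpha$ are $E$-homotopic.

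The main obstacle I anticipate is bookkeeping rather than substance: one must check that the $(n+3)$-coordinate pointwise convergence of $\widetilde{\alpha}_i$ to $\widetilde{\alpha}$ really satisfies the hypotheses of Proposition \ref{close2} (including that the trivial length-two chain $\{x_0, x_0\}$ counts as its own $E$-refinement), and that the two duplicate-removal steps at either end of $\widetilde{\alpha}'$ respect the rule that endpoints of a chain may not be removed in an $E$-homotopy (the $x_0$'s being removed are the second and second-to-last entries, which are internal). Once these details are tracked carefully, all the real analytic work is already packaged inside Proposition \ref{close2}.
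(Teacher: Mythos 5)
Your proposal is correct and follows essentially the same route as the paper: conjugate the loops to a common basepoint via a short bridging chain using $(x_0,x_{i0})\in E$ for large $i$, apply Proposition \ref{close2} to the resulting fixed-endpoint sequence, and unwind the definition of free $E$-homotopy. The only cosmetic difference is that the paper conjugates all the way to the global basepoint $\ast$ via an arbitrary $E$-chain $\beta$, whereas you stop at $x_0$; the paper's remarks on basepoint independence of free $E$-homotopy make these interchangeable.
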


\begin{proof}
Let $\beta=\{\ast=y_{0},...,y_{m}=x_{0}\}$ be any $E$-chain. Then for large
enough $i$, $(x_{0},x_{i0})\in E$ and therefore
\[
\alpha_{i}^{\prime}:=\{\ast=y_{0},...,x_{0},x_{i0}\}\ast\alpha_{i}\ast
\{x_{i0},x_{0},...,y_{0}=\ast\}
\]
is an $E$-loop at $\ast$. Then $\alpha_{i}^{\prime}\rightarrow\alpha^{\prime
}:=\overline{\beta}\ast\alpha\ast\beta$ and by Proposition \ref{close2}, for
all large $i$, $\alpha_{i}^{\prime}$ is $E$-homotopic to any $E$-refinement of
the $\overline{E}$-loop $\alpha^{\prime}$. By definition of freely
$E$-homotopic, the proof is finished.
\end{proof}

\begin{remark}
\label{proper}While we will not need it here, in analogy with the notion of
\textquotedblleft proper metric space\textquotedblright\ (meaning all closed
metric balls are compact), one could define a \textquotedblleft proper uniform
space\textquotedblright\ to be a uniform space $Y$ such that $Y\times Y$ has a
basis of, and is the countable union of compact entourges. Of course covering
spaces of a compact space $X$ may not be compact, but one can show that
$X_{E}$ is still a proper uniform space for any chained entourage $E$.
Proposition \ref{close2} and various other results in this paper may be
applied to proper uniform spaces.
\end{remark}

\section{Curves and the Fundamental Group}

Many of the next few results extend basic theorems concerning covering space
theory both discrete and classical, with $\phi_{E}$ playing the role of the
universal covering map. The standard proofs of some of these results in the
classical theory use the homotopy lifting property, which one could try to
emulate here: i.e. lifting $F$-homotopies to $F^{\ast}$-homotopies in $X_{E}
$. However, it is not clear whether this approach is worth the effort. For the
results we need, we are able to get by only with chain lifting. See also
\cite{W} for some lifting results concerning $\varepsilon$-chains in geodesic spaces.

\begin{lemma}
[Chain Lifting]\label{cl}Let $X$ be a uniform space and $E$ be an entourage.
Suppose that $\beta:=\{x_{0},...,x_{n}\}$ is an $E$-chain and $[\alpha]_{E}$
is such that $\phi_{E}([\alpha]_{E})=x_{0}$. Let $y_{i}:=[\alpha\ast
\{x_{0},...,x_{i}\}]_{E}$. Then $\widetilde{\beta}:=\{y_{0}=[\alpha]_{E}%
,y_{1},...,y_{n}=[\alpha\ast\beta]_{E}\}$ is the unique \textquotedblleft
lift\textquotedblright\ of $\beta$ to $[\alpha]_{E}$. That is,
$\widetilde{\beta}$ is the unique $E^{\ast}$-chain in $X_{E}$ starting at
$[\alpha]_{E}$ such that $\phi_{E}(\widetilde{\beta})=\beta$.
\end{lemma}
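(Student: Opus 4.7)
The plan is to verify the four things being claimed in turn: that each $y_i$ is a well-defined point of $X_E$, that $\phi_E(\widetilde{\beta})=\beta$, that consecutive pairs lie in $E^\ast$, and finally uniqueness.

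First, each $\alpha\ast\{x_0,\ldots,x_i\}$ is an honest $E$-chain starting at $\ast$ (its first segment is $\alpha$, and the two pieces share endpoint $x_0$), so $y_i\in X_E$ is well-defined. The statement $\phi_E(y_i)=x_i$ is then immediate from the definition of $\phi_E$ as the endpoint map, so $\phi_E(\widetilde{\beta})=\beta$.

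The main work is showing $(y_i,y_{i+1})\in E^\ast$ for each $i$. Unwinding the definition of $E^\ast$, I need $[\overline{y_i}\ast y_{i+1}]_E=[x_i,x_{i+1}]_E$, noting that $(x_i,x_{i+1})\in E$ because $\beta$ is an $E$-chain. Computing:
\[
\overline{(\alpha\ast\{x_0,\ldots,x_i\})}\ast(\alpha\ast\{x_0,\ldots,x_{i+1}\})=\{x_i,\ldots,x_0\}\ast\overline{\alpha}\ast\alpha\ast\{x_0,\ldots,x_{i+1}\}.
\]
The standard fact that $[\overline{\alpha}\ast\alpha]_E$ is the trivial class at $x_0$ collapses the middle, leaving (up to $E$-homotopy) the back-and-forth chain $\{x_i,x_{i-1},\ldots,x_1,x_0,x_1,\ldots,x_{i+1}\}$. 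I then unwind this palindrome one pair at a time using only legal basic moves: remove the interior $x_0$ (legal since $(x_1,x_1)\in E$), then remove the duplicated $x_1$ (legal since $(x_1,x_2)\in E$), then the resulting interior $x_1$ (legal since $(x_2,x_2)\in E$), and so on. After finitely many such removals the chain reduces to $\{x_i,x_{i+1}\}$, which is what was needed.

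For uniqueness, I invoke the fact recalled just before the lemma that the restriction of $\phi_E$ to any $E^\ast$-ball is a bijection onto the corresponding $E$-ball in $X$. If $\{y_0,y_1',\ldots,y_n'\}$ is any $E^\ast$-chain in $X_E$ starting at $[\alpha]_E$ with $\phi_E$-image equal to $\beta$, then $y_1'\in B(y_0,E^\ast)$ and $\phi_E(y_1')=x_1=\phi_E(y_1)$, so by injectivity on $B(y_0,E^\ast)$ we get $y_1'=y_1$; induction on $i$ finishes the job.

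The main obstacle I foresee is purely bookkeeping in the second step, namely writing down legal insertion/removal moves that collapse the palindromic chain while respecting the rule that endpoints cannot be removed and while keeping track of the convention by which a one-point chain concatenates trivially. Once the collapse is carried out carefully, the rest of the argument is straightforward from the definitions and the local-bijectivity property of $\phi_E$ already established.
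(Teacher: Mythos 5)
Your proof is correct and follows essentially the same route as the paper's: existence of the lift is read off from the definitions and the $E^{\ast}$-chain property, and uniqueness is established by induction on the chain length using the injectivity of $\phi_{E}$ from each $E^{\ast}$-ball onto the corresponding $E$-ball. The only difference is that you verify $(y_{i},y_{i+1})\in E^{\ast}$ directly from the primary definition by collapsing the palindromic chain $\overline{\alpha_{i}}\ast\alpha_{i+1}$, whereas the paper invokes the equivalent formulation of $E^{\ast}$ from Remark \ref{star}, under which the claim is immediate because $\alpha\ast\{x_{0},\dots,x_{i+1}\}$ is obtained from $\alpha\ast\{x_{0},\dots,x_{i}\}$ by appending a single point.
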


\begin{proof}
Since the endpoint of $\alpha\ast\{x_{0},...,x_{i}\}$ is $x_{i}$, $\phi
_{E}(y_{i})=x_{i}$; i.e., $\phi_{E}(\widetilde{\beta})=\beta$. Also, by
definition of $E^{\ast}$, $\widetilde{\beta}$ is an $E^{\ast}$-chain (in fact
this is easiest to see using the original definition from \cite{BPUU}, see
Remark \ref{star}). So we need only show uniqueness, which we will prove by
induction on $n$. For $n=0$, the starting point $[\alpha]_{E}$ is determined
by assumption, so $\{[\alpha]_{E}\}$ is the unique lift. Suppose we have
proved the statement for $n-1$. Then given $\beta=\{x_{0},...,x_{n}\}$ and
using the above notation we know that $\{y_{0},...,y_{n-1}\}$ is the unique
lift of $\{x_{0},...,x_{n-1}\}$. But we already know that $\phi_{E}$ is
injective from $B(y_{n-1},E^{\ast})$ onto $B(x_{n-1},E)$ and therefore $x_{n}$
has a unique preimage in $B(y_{n-1},E^{\ast})$. Therefore $y_{n}$, which lies
in $B(y_{n-1},E^{\ast}) $ and satisfies $\phi_{E}(y_{n})=x_{n}$, is the only possibility.
\end{proof}

\begin{lemma}
\label{action}Let $X$ be a uniform space and $E$ be an entourage. Let
$[\lambda]_{E}\in\pi_{E}(X)$ and $[\alpha]_{E}\in X_{E}$. Then considering
$[\lambda]_{E}$ as a deck transformation of $X_{E}$, $[\lambda]_{E}%
([\alpha]_{E})$ is the endpoint of the lift of $\alpha$ starting at
$[\lambda]_{E}$ in $X_{E}$. In particular, the unique lift of $\alpha$
starting at the basepoint ends at $[\alpha]_{E}$.
\end{lemma}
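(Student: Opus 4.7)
The plan is to apply the Chain Lifting Lemma (Lemma \ref{cl}) directly, after unpacking the definition of the deck group action. Recall from the paragraph introducing $\pi_E(X)$ that the action is defined by pre-concatenation: if $\lambda$ is an $E$-loop at $\ast$ and $\alpha$ is an $E$-chain starting at $\ast$, then $[\lambda]_E \cdot [\alpha]_E := [\lambda \ast \alpha]_E$. This is well-defined because concatenation respects $E$-homotopy, and the concatenation is formally admissible since the endpoint of $\lambda$ is $\ast$, which is the starting point of $\alpha$.

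First I would observe that $\phi_E([\lambda]_E) = \ast$, since by definition $\phi_E$ returns the endpoint of a representative chain and $\lambda$ is a loop. Writing $\alpha = \{\ast = x_0, x_1, \ldots, x_n\}$, the hypothesis of Lemma \ref{cl} is met with the lift starting point taken to be $[\lambda]_E$. The Chain Lifting Lemma then asserts that the unique $E^\ast$-chain in $X_E$ projecting to $\alpha$ and starting at $[\lambda]_E$ is
\[
\{[\lambda]_E,\ [\lambda \ast \{x_0,x_1\}]_E,\ \ldots,\ [\lambda \ast \alpha]_E\}.
\]
Its endpoint is therefore $[\lambda \ast \alpha]_E$, which by the definition of the action equals $[\lambda]_E([\alpha]_E)$. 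This is exactly the first claim.

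For the ``In particular'' statement, take $\lambda$ to be the trivial loop $\{\ast\}$ at the basepoint, whose class $[\{\ast\}]_E$ is the identity of $\pi_E(X)$ and at the same time the basepoint of $X_E$. Applying the first part (or applying Lemma \ref{cl} directly with $[\alpha]_E$ replaced by $[\{\ast\}]_E$), the unique lift of $\alpha$ starting at the basepoint ends at $[\{\ast\} \ast \alpha]_E = [\alpha]_E$, as desired.

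There is no substantive obstacle here; the entire content of the lemma is bookkeeping that matches the pre-concatenation formula for the action against the explicit formula for lifts supplied by Lemma \ref{cl}. The only point worth being careful about is verifying that the concatenation $\lambda \ast \alpha$ is legal (endpoints match) and that, when we specialize to the basepoint, $\{\ast\}$ really does represent the identity element under the conventions laid out in the construction of $\pi_E(X)$.
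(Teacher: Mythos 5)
Your proof is correct and uses essentially the same ingredients as the paper's: the pre-concatenation definition of the deck action together with the explicit lift formula of the Chain Lifting Lemma (the paper lifts $\lambda\ast\alpha$ at the basepoint and splits it by uniqueness, while you lift $\alpha$ directly at $[\lambda]_{E}$, which is a slightly more direct reading of the same lemma). No gaps.
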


\begin{proof}
By definition, $[\lambda]_{E}([\alpha]_{E})=[\lambda\ast\alpha]_{E}$, which by
the Chain Lifting Lemma (Lemma \ref{cl}) is the endpoint of the lift of
$\lambda\ast\alpha$ to $[\ast]_{E}$. But by uniqueness this is the endpoint of
the lift of $\alpha$ starting at the endpoint $z$ of the lift
$\widetilde{\lambda}$ of $\lambda$ starting at $[\ast]_{E}$. By the Chain
Lifting Lemma, $z=[\lambda]_{E}$, completing the proof.
\end{proof}

We also need to extend the definition of \textquotedblleft
stringing\textquotedblright\ from \cite{PW1}, which among other things allows
us to understand the relationship between the fundamental group and the groups
$\pi_{E}(X)$. Note that the definition below is slightly stronger than the one
in \cite{PW1}, but the new one is more natural and necessary for the current
paper; replacing the definition in \cite{PW1} by the new one would have
essentially no impact on \cite{PW1}.

\begin{definition}
Let $\alpha:=\{x_{0},...,x_{n}\}$ be an $E$-chain in a metric space $X$, where
$E$ is an entourage in a uniform space. A stringing of $\alpha$ consists of a
path $\widehat{\alpha}$ formed by concatenating paths $\gamma_{i}$ from
$x_{i}$ to $x_{i+1}$ where each path $\gamma_{i}$ lies entirely in
$B(x_{i},E)\cap B(x_{i+1},E)$. Conversely, let $c:[0,1]\rightarrow X$ be a
curve. An $E$-subdivision of $c$ is an $E$-chain $\beta=\{c(t_{0}%
),...,c(t_{n})\}$, where $0=t_{0}\leq\cdot\cdot\cdot\leq t_{n}=1$ are such
that $c$ is a stringing of $\beta$.
\end{definition}

\begin{remark}
Note that, unlike the typical definition of partitions, our definition of
$E$-subdivision allows \textquotedblleft repeated points\textquotedblright%
\ (as do chains in general) due to the inequality $t_{i}\leq t_{i+1}$. This
simplifies matters, for example when considering limits of chains and curves.
\end{remark}

\begin{lemma}
\label{subdiv}If $X$ is a metric space, $c:[0,1]\rightarrow X$ is a curve and
$E$ is an entourage in $X$, then $c$ has an $E$-subdivision. Moreover, any two
$E$-subdivisions of $c$ are $E$-homotopic and curves $c_{1}$ and $c_{2}$ with
the same endpoints are $E$-homotopic if and only if any $E$-subdivision of
$c_{1}$ is $E$-homotopic to any $E$-subdivision of $c_{2}$.
\end{lemma}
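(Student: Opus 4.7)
The plan is to prove the three claims in order. For the first, I find a $\delta>0$ such that any two parameters within $\delta$ of each other are carried by $c$ into $E$-related pairs. This exists by a standard argument: $E$ contains an open neighborhood $U$ of the diagonal, so $(c\times c)^{-1}(U)$ is an open neighborhood of the diagonal in $[0,1]^{2}$, and by compactness (Lebesgue number lemma) some $\delta$-tube around the diagonal lies inside it. Then any partition $0=t_{0}\leq\cdots\leq t_{n}=1$ with mesh less than $\delta$ produces an $E$-subdivision: consecutive parameter differences are less than $\delta$, so $\{c(t_{0}),\ldots,c(t_{n})\}$ is an $E$-chain; and for any $s\in[t_{i},t_{i+1}]$ both $|s-t_{i}|$ and $|s-t_{i+1}|$ are less than $\delta$, giving $c(s)\in B(c(t_{i}),E)\cap B(c(t_{i+1}),E)$, which is exactly the required stringing property.

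The heart of the argument is a refinement claim: if $\beta'$ is an $E$-subdivision whose parameter set contains that of another $E$-subdivision $\beta$, then $\beta\sim_{E}\beta'$. To prove it, look at the subchain of $\beta'$ between two successive parameter values $t_{i},t_{i+1}$ of $\beta$. Those intermediate parameter values lie in $[t_{i},t_{i+1}]$, so by the stringing property of $\beta$ the corresponding points of $\beta'$ all lie in $B(c(t_{i}),E)$. This subchain is an $E$-chain (inherited from $\beta'$) contained in $B(c(t_{i}),E)$, so Lemma \ref{reflem} collapses it to $\{c(t_{i}),c(t_{i+1})\}$ via an $E$-homotopy. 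Concatenating these local $E$-homotopies yields the desired $E$-homotopy from $\beta'$ to $\beta$.

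Given two $E$-subdivisions $\beta_{1}, \beta_{2}$, form the sorted union of their parameter sets and insert further parameter values if necessary so that the resulting partition has mesh less than the $\delta$ of paragraph one. The associated subdivision $\beta$ is an $E$-subdivision by the existence argument, and it refines both $\beta_{1}$ and $\beta_{2}$ at the parameter level, so the refinement claim yields $\beta_{1}\sim_{E}\beta\sim_{E}\beta_{2}$. The final ``if and only if'' then reduces to definition-chasing: under the (forthcoming) definition of $E$-homotopy of curves as the existence of $E$-homotopic $E$-subdivisions, one direction is trivial, and the other follows from this uniqueness together with transitivity of $E$-homotopy of $E$-chains. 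The main subtlety I anticipate is coordinating the two aspects of being an $E$-subdivision (the chain condition on the selected points and the stringing condition on the intervening arcs of $c$) so that a common refinement is simultaneously an $E$-subdivision and lets us apply Lemma \ref{reflem}; choosing $\delta$ at the start and inserting enough parameter values into the common refinement is what makes this go through cleanly.
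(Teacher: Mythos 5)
Your proposal is correct and follows essentially the same route as the paper: existence via a uniform-continuity/small-mesh argument, and uniqueness by passing to the common refinement of the two parameter sets and collapsing it interval-by-interval with Lemma \ref{reflem}. The only cosmetic difference is that you insert extra parameter values so the common refinement is itself an $E$-subdivision of small mesh, whereas the paper verifies directly that the bare union $\tau_{1}\cup\tau_{2}$ is already an $E$-refinement of each $\alpha_{i}$; both handle the same point.
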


\begin{proof}
For existence, it suffices to find an $E_{\varepsilon}$-subdivision for some
$E_{\varepsilon}\subset E$. Since $c$ is uniformly continuous there exists
some $\delta>0$ such that if $\left\vert s-t\right\vert <\delta$ then
$d(c(s),c(t))<\varepsilon$. Now subdivide $[0,1]$ into intervals of length
less than $\delta$ with endpoints $t_{0}=0<t_{1}<\cdot\cdot\cdot<t_{n}=1$.
Then the image of the restriction of $c$ to any interval $[t_{i},t_{i+1}]$
lies entirely in $B(c(t_{i}),\varepsilon)\cap B(c(t_{i+1}),\varepsilon)$ and
hence the $\varepsilon$-chain $\{c(t_{0}),...,c(t_{n})\}$ is an
$E_{\varepsilon}$-subdivision.

For the second statement, suppose that $\alpha_{1},\alpha_{2}$ are
$E$-subdivisions of $c$, corresponding to partitions $\tau_{1},\tau_{2}$ of
$[0,1]$. We claim that $s=\tau_{1}\cup\tau_{2}$ has the property that if
$s=\{s_{0}\leq\cdot\cdot\cdot\leq s_{m}\}$ then $\alpha:=\{c(s_{0}%
),...,c(s_{m})\}$ is an $E$-refinement of both $\alpha_{1}$ and $\alpha_{2}$.
This will complete the proof by Lemma \ref{reflem}. Moreover, by symmetry it
suffices to show that $\alpha$ is an $E$-refinement of $\alpha_{1}$. Consider
some $t_{i}\leq t_{i+1}\in\tau_{1}$, and suppose that for some $j\leq k$,
$t_{i}\leq s_{j}\leq\cdot\cdot\cdot\leq s_{k}<t_{i+1}$ with $s_{m}\in\tau_{2}%
$. Since $\alpha_{1}$ is an $E$-subdivision of $c$, the restriction of $c$ to
$[t_{i},t_{i+1}]$ lies entirely in $B(c(t_{i}),E)\cap B(c(t_{i+1}),E)$ and
hence each of the points $c(s_{m})$, which are the added points of $\alpha
_{2}$, lie in $B(c(t_{i}),E)\cap B(c(t_{i+1}),E)$. This shows that $\alpha$ is
an $E$-refinement of $\alpha_{1}$, as required.
\end{proof}

\begin{lemma}
\label{strex}Suppose that $X$ is a Peano continuum, $E$ is a chained entourage
and $\alpha$ is an $E$-chain. Then for some chained entourage $F\subset E$,
$\alpha$ has an $F$-refinement with a stringing. In particular, $[\alpha]_{E}$
contains a representative with a stringing.
\end{lemma}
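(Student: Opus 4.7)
The plan is to use the Bing--Moise theorem to transfer the problem to a geodesic setting, where stringings of chains can be produced from genuine geodesic segments. Let $d$ denote the given metric on $X$, and invoke Bing--Moise to obtain a compatible geodesic metric $d'$ on $X$; compatibility means that $d$ and $d'$ induce the same uniform structure (and hence the same topology) on $X$. By compatibility, choose $\mu > 0$ small enough that the $d'$-metric entourage $E^{d'}_\mu := \{(x,y) : d'(x,y) < \mu\}$ is contained in $E$, and set $F := E^{d'}_\mu$.

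First I would verify that $F$ is a chained entourage in the original uniform structure. Openness of $F$ in the product topology (which coincides for $d$ and $d'$) immediately yields $F \subset \overline{\operatorname{int} F}$, and compactness of $\overline{F}$ is automatic since $X$ is compact. For the chain-connectedness condition, given $(x,y) \in F$, i.e.\ $d'(x,y) < \mu$, let $\gamma_{xy}$ be a $d'$-minimizing geodesic from $x$ to $y$; every point of $\gamma_{xy}$ is within $d'$-distance less than $\mu$ of both $x$ and $y$, so $\gamma_{xy} \subset B_{d'}(x,\mu) \cap B_{d'}(y,\mu) = B(x,F) \cap B(y,F)$. Since $\gamma_{xy}$ is a connected (hence chain-connected) subset joining $x$ and $y$ inside $B(x,F) \cap B(y,F)$, Lemma~\ref{shortcut} shows that $F$ is chained.

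Next, since $E$ is chained and $F \subset E$, the remark stated just before this lemma yields an $F$-refinement $\alpha' = \{x_0 = y_0, y_1, \dots, y_m = x_n\}$ of $\alpha$. For each consecutive pair $(y_j, y_{j+1}) \in F$, let $\gamma_j$ be a $d'$-geodesic from $y_j$ to $y_{j+1}$; by the argument above, $\gamma_j \subset B(y_j, F) \cap B(y_{j+1}, F)$. The concatenation $\widehat{\alpha'} := \gamma_0 \ast \gamma_1 \ast \cdots \ast \gamma_{m-1}$ is then a stringing of the $F$-chain $\alpha'$. For the ``in particular'' clause, Lemma~\ref{reflem} gives $[\alpha']_E = [\alpha]_E$, so $[\alpha]_E$ has the representative $\alpha'$, equipped with stringing $\widehat{\alpha'}$.

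The main obstacle is verifying the chained condition for $F$; specifically, working with uniform local path-connectedness of the Peano continuum alone would not directly produce paths constrained to the narrow $F$-balls, because ULPC only guarantees that pairs within some smaller $\delta(\mu)$ are joined by paths of diameter $< \mu$, not that pairs within $F$ are joined by paths of diameter $< \mu$. Passing to the compatible geodesic metric $d'$ sidesteps this mismatch, since $d'$-geodesics have length exactly $d'(x,y) < \mu$ and are therefore automatically confined to both $F$-balls. The remaining verifications are routine applications of the definitions of $F$-refinement, chained entourage, and stringing, together with Lemma~\ref{reflem}.
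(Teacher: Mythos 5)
Your proof is correct and follows essentially the same route as the paper: pass to a compatible geodesic metric via Bing--Moise, take $F$ to be a small metric entourage of that metric contained in $E$, use the chained condition on $E$ to produce an $F$-refinement, and string it with geodesics, which automatically lie in the required intersections of $F$-balls. You are somewhat more careful than the paper in explicitly checking that $F$ is itself chained and in invoking Lemma~\ref{reflem} for the ``in particular'' clause, but these are just details the paper leaves implicit.
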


\begin{proof}
Let $X$ have a geodesic metric (by the Bing-Moise Theorem). Since $E$ is a
chained entourage, there is some $E_{\varepsilon}$-refinement $\beta
=\{x_{0},...,x_{n}\}$ of $\alpha$ when $E_{\varepsilon}\subset E$. Now $x_{i}$
and $x_{i+1}$ may be joined by a geodesic, which remains inside $B(x_{i}%
,E_{\varepsilon})\cap B(x_{i+1},E_{\varepsilon})\subset B(x_{i},E)\cap
B(x_{i+1},E)$. The concatenation of these geodesics is an $F$-stringing, hence
an $E$-stringing of $\beta$.
\end{proof}

\begin{remark}
\label{pnc}Note that the stringing of $\beta$ in the above lemma may not be a
stringing of $\alpha$ since there is no reason why the concatenated curve must
lie enside $B(y_{i},E)\cap B(y_{i+1},E)$, where $y_{i}$ and $y_{i+1}$ are
consecutive points in $\alpha$. Note also that the proof of the lemma only
requires that $X$ be a uniform space with a uniformly equivalent geodesic
metric--for example any smooth, possibly non-compact, manifold with the
uniform structure given by a particular Riemannian metric.
\end{remark}

\begin{definition}
\label{ehomc}If $c_{1},c_{2}$ are curves in a uniform space starting and
ending at the same point,and $E$ is an entourage, we say that $c_{1}$ and
$c_{2}$ are $E$-homotopic (resp. freely $E$-homotopic) if there exist
$E$-subdivisions $\kappa_{i}$ of $c_{i}$, that are $E$-homotopic (resp. freely
$E$-homotopic).
\end{definition}

\begin{lemma}
\label{ehomo}Let $E$ be an entourage in a uniform space $X$ and $c_{1},c_{2}$
be curves starting and ending at the same point. Then $c_{1}$ and $c_{2}$ are
$E$-homotopic if and only if one lift (equivalently any lifts) of $c_{1}$ and
$c_{2}$ to $X_{E}$ starting at the same point end at the same point. In
particular, a curve $c$ lifts as a loop to $X_{E}$ if and only if $c$ is
freely $E$-null.
\end{lemma}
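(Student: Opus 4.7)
The plan is to reduce the question about curves to the corresponding question about $E$-chains via $E$-subdivisions, and then apply the Chain Lifting Lemma. By Definition \ref{ehomc} and Lemma \ref{subdiv}, $c_1$ and $c_2$ are $E$-homotopic if and only if some (equivalently any) $E$-subdivisions $\alpha_1$ of $c_1$ and $\alpha_2$ of $c_2$ satisfy $[\alpha_1]_E = [\alpha_2]_E$. Fix a preimage $[\gamma]_E \in X_E$ of the common starting point $x = c_1(0) = c_2(0)$. By Lemma \ref{cl}, the lift of $\alpha_i$ starting at $[\gamma]_E$ ends at $[\gamma \ast \alpha_i]_E$. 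I would then observe that the continuous lift of $c_i$ starting at $[\gamma]_E$, which exists and is unique by the regular covering property of $\phi_E$, ends at the same point: each segment of $c_i$ between consecutive subdivision points $c_i(t_j)$ and $c_i(t_{j+1})$ lies inside the $E$-ball $B(c_i(t_j), E)$, which is evenly covered by disjoint $E^*$-balls, so the continuous lift must pass in succession through the points $[\gamma \ast \{c_i(t_0), \ldots, c_i(t_j)\}]_E$.

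The statement therefore reduces to showing $[\gamma \ast \alpha_1]_E = [\gamma \ast \alpha_2]_E$ if and only if $[\alpha_1]_E = [\alpha_2]_E$. The forward direction is immediate from the compatibility of concatenation with $E$-homotopy. For the converse, I would concatenate $\overline{\gamma}$ on the left of both sides and use the elementary fact that $\overline{\gamma} \ast \gamma$ is $E$-homotopic to the trivial chain $\{x, x\}$; this is shown by iteratively applying the ``remove'' basic move to cancel the innermost mirror-image pair (the diagonal lies in every entourage, so the required conditions for legality are satisfied), peeling the chain from the inside out.

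For the ``In particular'' clause, apply the first part with $c_2$ the constant curve at $c(0)$. Then $c$ lifts as a loop iff its lift ends where it starts, i.e., $[\gamma \ast \alpha]_E = [\gamma]_E$ for an $E$-subdivision $\alpha$ of $c$. Concatenating $\overline{\gamma}$ on the right and using that $\gamma \ast \overline{\gamma}$ is $E$-null as a loop at the basepoint $\ast$ (by the same cancellation argument), this is equivalent to $\gamma \ast \alpha \ast \overline{\gamma}$ being $E$-null. Unpacking Formula \ref{tired} with $\lambda_2$ a trivial chain at $\ast$ and $\beta = \{\ast\}$, this is precisely the condition for $\alpha$ (hence $c$) to be freely $E$-null.

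The main obstacle I anticipate is the bookkeeping in the first step: verifying that the continuous lift of $c_i$ guaranteed by standard covering space theory really does agree with the discrete lift of any $E$-subdivision at each subdivision point. The rest is straightforward manipulation of $E$-chains, resting on concatenation compatibility and the basic cancellation $[\overline{\gamma} \ast \gamma]_E = [x, x]_E$.
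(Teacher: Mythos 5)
Your proof is correct and follows essentially the same route as the paper's: reduce to $E$-subdivisions via Lemma \ref{subdiv}, apply the Chain Lifting Lemma to identify the endpoint of the lift, and unwind the conjugation by $\gamma$ to get the \textquotedblleft freely $E$-null\textquotedblright\ statement. The \textquotedblleft bookkeeping\textquotedblright\ step you flag---that the continuous lift of $c_i$ agrees with the discrete lift of its $E$-subdivision at each subdivision point, via even covering of the $E$-balls by $E^{\ast}$-balls---is exactly what the paper dispatches as \textquotedblleft arguments analogous to standard ones for covering space theory\textquotedblright\ (and is the content of Proposition \ref{ender}), so spelling it out as you do is sound and arguably an improvement in completeness rather than a change of method.
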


\begin{proof}
By arguments analogous to standard ones for covering space theory involving
concatenations and uniqueness of lifts, the entire statement reduces to
showing the following: If $c$ is a loop at the basepoint then $c$ lifts as a
loop at the basepoint if and only if it is $E$-null. Let $\lambda
:=\{\ast=c(t_{0}),...,c(t_{n})=\ast\}$ be a $E$-subdivision of $c$. By the
Chain Lifting Lemma its unique lift $\widetilde{\lambda}$ starting at the
basepoint ends at $[\lambda]_{E}$, i.e. $\widetilde{\lambda}$ is a loop if and
only if $\lambda$, hence $c$, is $E$-null.
\end{proof}

The next two statements are extensions of \cite{PW1}, Proposition 20 and
Corollary 21, replacing $\varepsilon$ by $E$ or $E^{\ast}$ as appropriate.
Note only that $E$ is not assumed to be chained, so that $X_{E}$ may not be
connected (and we are not guaranteed that stringings exist). But this does not
affect the statement or arguments.

\begin{proposition}
\label{ender}Let $E$ be an entourage in a uniform space $X$ and $\alpha$ be an
$E$-chain starting at the basepoint. Then the unique lift of any stringing
$\widehat{\alpha}$ starting at the basepoint $[\ast]_{E}$ in $X_{E}$ has
$[\alpha]_{E}$ as its endpoint.
\end{proposition}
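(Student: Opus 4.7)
The plan is to prove this by induction on the length of $\alpha$, using the Chain Lifting Lemma together with the even-covering property of $E$-balls that underlies the covering map $\phi_E$. Write $\alpha=\{\ast=x_0,\dots,x_n\}$ and decompose the stringing as $\widehat{\alpha}=\gamma_0\ast\gamma_1\ast\cdots\ast\gamma_{n-1}$, where each $\gamma_i$ is a path from $x_i$ to $x_{i+1}$ lying entirely in $B(x_i,E)\cap B(x_{i+1},E)$. For each $i$, set $y_i:=[\{x_0,\dots,x_i\}]_E\in X_E$; by the Chain Lifting Lemma, $\{y_0,\dots,y_n\}$ is the unique lift of $\alpha$ as an $E^\ast$-chain starting at $[\ast]_E$, and in particular $(y_i,y_{i+1})\in E^\ast$, so $y_{i+1}\in B(y_i,E^\ast)$ and $\phi_E(y_i)=x_i$.

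The inductive claim is that the continuous lift $\widetilde{\widehat{\alpha}}$ of $\widehat{\alpha}$ starting at $[\ast]_E$ passes through $y_i$ at the parameter value corresponding to $x_i$. The base case $i=0$ is immediate. For the inductive step, assume $\widetilde{\widehat{\alpha}}$ reaches $y_i$ at the endpoint of $\gamma_{i-1}$. The path $\gamma_i$ is a continuous curve in $B(x_i,E)$, and from the construction recalled in the excerpt, $\phi_E$ restricts to a bijection (in fact a homeomorphism, by the local homeomorphism statement following the definition of the topology on $X_E$) from the $E^\ast$-ball $B(y_i,E^\ast)$ onto $B(x_i,E)$; moreover, $\phi_E^{-1}(B(x_i,E))$ is the disjoint union of such $E^\ast$-balls. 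By uniqueness of continuous lifts through a covering map, the lift of $\gamma_i$ starting at $y_i$ must remain in the single sheet $B(y_i,E^\ast)$ and is therefore just $(\phi_E|_{B(y_i,E^\ast)})^{-1}\circ\gamma_i$, which ends at the unique preimage of $x_{i+1}=\gamma_i(1)$ inside $B(y_i,E^\ast)$. Since $y_{i+1}$ lies in $B(y_i,E^\ast)$ and satisfies $\phi_E(y_{i+1})=x_{i+1}$, this forces the lift of $\gamma_i$ to terminate precisely at $y_{i+1}$, completing the induction.

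The only real issue is verifying that the lift of each $\gamma_i$ stays inside the correct sheet; everything else is bookkeeping. This is handled cleanly by the two facts already proved: that $B(x_i,E)$ is evenly covered by disjoint $E^\ast$-balls, and that consecutive lifts $y_i,y_{i+1}$ of the chain $\alpha$ sit in a common $E^\ast$-ball by the Chain Lifting Lemma. Putting the induction together, $\widetilde{\widehat{\alpha}}$ terminates at $y_n=[\{x_0,\dots,x_n\}]_E=[\alpha]_E$, as claimed.
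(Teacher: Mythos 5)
Your proof is correct and takes essentially the same approach as the paper's: both arguments rest on the observation that each segment $\gamma_i$ of the stringing lies in $B(x_i,E)$, which is evenly covered by disjoint $E^{\ast}$-balls, so its continuous lift starting at $y_i$ stays in the single sheet $B(y_i,E^{\ast})$ and must terminate at $y_{i+1}$. The paper packages this by showing that the sampled points $\widetilde{c}(t_0),\dots,\widetilde{c}(t_n)$ form an $E^{\ast}$-chain and then appealing to uniqueness in the Chain Lifting Lemma together with Lemma \ref{action}, whereas you run the induction in the opposite direction starting from the known chain lift; the content is the same.
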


\begin{proof}
By uniqueness in the Chain Lifting Lemma and the second statement in Lemma
\ref{action}, it suffices to show that if $c:[0,1]\rightarrow X$ is a
stringing of $\alpha:=\{x_{0},...,x_{n}\}$ with $x_{i}=c(t_{i})$, and
$\widetilde{c}$ is the unique lift of $c$ to the basepoint, then
$\{\widetilde{c}(t_{0}),...,\widetilde{c}(t_{n})\}$ is an $E^{\ast}$-chain.
But by definition, for any $i$, the segment of $c$ from $x_{i}$ to $x_{i+1}$
lies entirely in $B(x_{i},E)$, so lifts into $B(\widetilde{c}(t_{i}),E^{\ast
})$, proving that $(\widetilde{c}(t_{i}),\widetilde{c}(t_{i+1}))\in E^{\ast} $.
\end{proof}

The next corollary now follows from the homotopy lifting property for covering
spaces, verifying that homotopies of curves are \textquotedblleft stronger
than\textquotedblright\ $E$-homotopies. Recall that even with a chained
entourage in a geodesic space we are not guaranteed that stringings exist, but
Lemma \ref{strex} tells us we can always find a stringing of an $E $-homotopic
$E$-chain.

\begin{corollary}
\label{homimp}Let $E$ be an entourage in a metric space $X$ and $\alpha,\beta$
be $E$-chains. If there exist stringings $\widehat{\alpha}$ and
$\widehat{\beta}$ that are path homotopic then $\alpha$ and $\beta$ are $E$-homotopic.
\end{corollary}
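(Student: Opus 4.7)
The plan is to combine Proposition \ref{ender}, which identifies the endpoint of the lift of a stringing, with the classical homotopy lifting property of the covering map $\phi_{E}:X_{E}\rightarrow X$. Since $E$-homotopy only compares chains with the same endpoints, $\alpha$ and $\beta$ must share a common initial point $x_{0}$ and a common terminal point $x_{n}$, and the stringings $\widehat{\alpha},\widehat{\beta}$ are then curves from $x_{0}$ to $x_{n}$. I would take $x_{0}$ as the basepoint for $X_{E}$ (the cited constructions are basepoint-independent, and equivalently one could pre-concatenate a common $E$-chain from $\ast$ to $x_{0}$, since $E$-homotopy is preserved under such pre-concatenation). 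In this setup $[\alpha]_{E},[\beta]_{E}\in X_{E}$.

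The core step is to lift both stringings to paths in $X_{E}$ beginning at $[x_{0}]_{E}$. By Proposition \ref{ender}, the lift of $\widehat{\alpha}$ terminates at $[\alpha]_{E}$ and the lift of $\widehat{\beta}$ terminates at $[\beta]_{E}$. Because $\widehat{\alpha}$ and $\widehat{\beta}$ are path homotopic rel endpoints in $X$, the classical homotopy lifting property for covering maps lifts that homotopy to a path homotopy rel endpoints in $X_{E}$ between the two lifted paths. In particular, the two lifts share a common terminal point, so $[\alpha]_{E}=[\beta]_{E}$, which is exactly the conclusion that $\alpha$ and $\beta$ are $E$-homotopic.

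The only thing that genuinely needs to be cited is that the classical homotopy lifting property applies to $\phi_{E}$, i.e., that $\phi_{E}$ is a covering map of topological spaces in the standard sense. This has already been established earlier in the paper: $\phi_{E}$ is a regular covering map whose $E$-balls are evenly covered by disjoint unions of $E^{\ast}$-balls. With that in hand there is no serious obstacle, and the argument proceeds along exactly the lines the remark preceding the corollary suggests.
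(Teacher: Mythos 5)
Your proposal is correct and follows exactly the route the paper intends: the paper gives no separate proof, simply noting that the corollary "follows from the homotopy lifting property for covering spaces," and your write-up fills in precisely that argument using Proposition \ref{ender} to identify the endpoints of the lifted stringings. The basepoint bookkeeping and the citation that $\phi_{E}$ is a genuine covering map (so the classical lifting property applies) are both handled appropriately.
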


\begin{corollary}
\label{finally}Let $X$ be a Peano continuum, $\lambda=\{x_{0},...,x_{n}\}$ be
an $E$-loop, and $c_{i}$ be a path from $x_{i}$ to $x_{i+1}$, denoting by $c$
the concatenation of those paths. If the lift of $c$ to any point
$[\alpha]_{E}$ in $X_{E}$ ends at $[\alpha\ast\lambda]_{E}$, then $c$ is
$E$-homotopic to some, hence any stringing of $\lambda$.
\end{corollary}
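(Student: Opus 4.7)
The plan is to reduce the claim to a lift-endpoint comparison via Lemma~\ref{ehomo}, which identifies $E$-homotopic curves with pairs of curves whose lifts share a terminal point when started from a common point. Since the hypothesis already records that the lift of $c$ beginning at $[\alpha]_E$ ends at $[\alpha\ast\lambda]_E$, my remaining task is to show that any stringing $\widehat{\lambda}$ of $\lambda$ lifts from $[\alpha]_E$ to that same endpoint.

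To compute that endpoint, I would first invoke Lemma~\ref{strex} to replace $\alpha$ by an $E$-refinement $\alpha'$ that admits a stringing $\widehat{\alpha}$; by Lemma~\ref{reflem}, $[\alpha']_E=[\alpha]_E$, so also $[\alpha'\ast\lambda]_E=[\alpha\ast\lambda]_E$. Next I would observe that the concatenation $\widehat{\alpha}\ast\widehat{\lambda}$ is a stringing of the concatenated chain $\alpha'\ast\lambda$ — this is immediate, because the stringing condition is local to consecutive pairs of chain-points. Then I would apply Proposition~\ref{ender} twice: once to $\widehat{\alpha}$, to see that its lift from $[\ast]_E$ ends at $[\alpha']_E=[\alpha]_E$, and once to the full $\widehat{\alpha}\ast\widehat{\lambda}$, to see that its lift from $[\ast]_E$ ends at $[\alpha'\ast\lambda]_E=[\alpha\ast\lambda]_E$. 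Uniqueness of chain lifts (Lemma~\ref{cl}) will then force the lift of $\widehat{\lambda}$ starting at $[\alpha]_E$ to end at $[\alpha\ast\lambda]_E$, matching the hypothesis on $c$.

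Lemma~\ref{ehomo} will then produce the $E$-homotopy between $c$ and $\widehat{\lambda}$. The ``some, hence any'' clause is automatic because nothing in the computation depends on which stringing of $\lambda$ was chosen: every stringing of $\lambda$ lifts from $[\alpha]_E$ to the same point $[\alpha\ast\lambda]_E$, so any two stringings are mutually $E$-homotopic and each is $E$-homotopic to $c$.

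I do not anticipate a substantive obstacle. The only point requiring a little care is the transition from $\alpha$ to a refinement $\alpha'$ that genuinely admits a stringing — which Lemma~\ref{strex} delivers only after refinement — and the tacit assumption that $\lambda$ admits a stringing at all (the statement is vacuous otherwise). These are bookkeeping issues that are harmless in $X_E$, because refinement does not change the $E$-homotopy class of a chain and therefore does not alter any of the fiber points in play.
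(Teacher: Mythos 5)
Your proposal is correct and follows essentially the same route as the paper's proof: both pass to a stringing of (a refinement of) $\alpha$ via Lemma \ref{strex}, apply Proposition \ref{ender} to the concatenated stringing to identify the lift endpoint $[\alpha\ast\lambda]_{E}$, and conclude with Lemma \ref{ehomo}. The only cosmetic difference is that you compare the lifts of $c$ and $\widehat{\lambda}$ from $[\alpha]_{E}$ directly via uniqueness of lifts, whereas the paper compares the concatenations $\widehat{\alpha}\ast\widehat{\lambda}$ and $\widehat{\alpha}\ast c$ and then cancels $\widehat{\alpha}$.
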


\begin{proof}
The statement and the proof use the Chain Lifting Lemma (Lemma \ref{cl}), and
we will use it without further explicit reference. First, in order for the
hypothesis to make sense, $\alpha$ must be an $E$-chain from the basepoint to
$x_{0}$. By Lemma \ref{strex}, up to $E$-homotopy we may let $\widehat{\alpha
}$ be a stringing of $\alpha$. Then $\widehat{\lambda}\ast\widehat{\alpha}$ is
a stringing of $\lambda\ast\alpha$. According to Proposition \ref{ender}, the
endpoint of the unique lift $\widetilde{\widehat{\lambda}\ast\widehat{\alpha}%
}$ of $\widehat{\lambda}\ast\widehat{\alpha}$ is $[\lambda\ast\alpha]_{E}$. By
assumption, the lift $\widetilde{c}$ of $c$ to $[\alpha]_{E}$ also ends in
$[\lambda\ast\alpha]_{E}$. By Lemma \ref{ehomo}, $\widehat{\lambda}%
\ast\widehat{\alpha}$ is $E$-homotopic to $c\ast\widehat{\alpha}$ and
therefore $c$ is $E$-homotopic to $\widehat{\lambda}$ as required.
\end{proof}

The above Corollary is useful because it \textquotedblleft frees
us\textquotedblright\ from having to rely on stringings in some situations;
for example the next proposition is useful in the proof of Theorem
\ref{final'}.

\begin{proposition}
\label{finally2}Let $X$ be a geodesic space and $E$ be a chained entourage. Then

\begin{enumerate}
\item If $\lambda$ is an $E$-loop then there is a curve loop $c$ such that
some (hence any) $E$-subdivision of $c$ is $E$-homotopic to $\lambda$ and
$L(c)=L(\lambda)$.

\item Conversely, if $c$ is a curve loop then there is an $E$-loop $\lambda$
such that $L(c)=L(\lambda)$ and $\lambda$ is $E$-homotopic to some (hence any)
$E$-subdivision of $c$.
\end{enumerate}
\end{proposition}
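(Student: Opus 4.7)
The plan is to prove the two parts dually. For Part 1, I will construct $c$ by lifting $\lambda$ into $X_E$, joining consecutive lifted chain points by geodesics in the lifted metric, and then projecting back down. For Part 2, I will take a sufficiently fine $E$-subdivision of $c$ and insert one auxiliary point per segment, each chosen so that its two distances to the neighbouring subdivision points add up to the arclength of the corresponding piece of $c$; this leaves the $E$-homotopy class unchanged while padding the chain-length up to exactly $L(c)$.

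For Part 1, the given $E$-loop $\lambda=\{x_0,\dots,x_n\}$ lifts by the Chain Lifting Lemma to an $E^{\ast}$-chain $\tilde\lambda=\{[\ast]_E=\tilde x_0,\dots,\tilde x_n=[\lambda]_E\}$ in $X_E$. Because $X$ is geodesic, the lifted metric on $X_E$ is geodesic, so I can join each $\tilde x_{i-1},\tilde x_i$ by an $X_E$-geodesic and concatenate them into a curve $\tilde c$. By Lemma \ref{metric} the lifted distance between these consecutive points equals $d(x_{i-1},x_i)$, so $L(\tilde c)=L(\lambda)$; since $\phi_E$ is a length-preserving local isometry, the projection $c:=\phi_E\circ\tilde c$ satisfies $L(c)=L(\lambda)$ and is a loop at $\ast$ because both $[\ast]_E$ and $[\lambda]_E$ sit above $\ast$. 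The curve $\tilde c$ is the unique lift of $c$ starting at $[\ast]_E$ and ends at $[\lambda]_E$, so equivariance of the deck action gives that the lift of $c$ from any $[\alpha]_E$ ends at $[\alpha\ast\lambda]_E$. Since $c$ is a concatenation of paths from $x_{i-1}$ to $x_i$, Corollary \ref{finally} applies and $c$ is $E$-homotopic as a curve to a stringing provided by Lemma \ref{strex}; that stringing admits the $F$-refinement $\beta$ of $\lambda$ as an $E$-subdivision, and $\beta\sim_E\lambda$ by Lemma \ref{reflem}, so via Lemma \ref{subdiv} every $E$-subdivision of $c$ is $E$-homotopic to $\lambda$.

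For Part 2, I parameterize $c:[0,L]\to X$ by arclength with $L:=L(c)$, fix $\varepsilon>0$ with $E_\varepsilon\subset E$, and choose a partition $0=s_0<\cdots<s_m=L$ with each $s_i-s_{i-1}<\varepsilon$. Writing $\ell_i:=s_i-s_{i-1}$ and $d_i:=d(c(s_{i-1}),c(s_i))\le\ell_i$, the chain $\mu:=\{c(s_0),\dots,c(s_m)\}$ is an $E$-subdivision of $c$ by the triangle-inequality argument from Lemma \ref{subdiv}. For each $i$ I want a point $p_i$ with
\[
 d(c(s_{i-1}),p_i)+d(p_i,c(s_i))=\ell_i,
\]
so that inserting $p_i$ lifts the chain-length of that segment from $d_i$ to exactly $\ell_i$. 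The continuous function $f_i(p):=d(c(s_{i-1}),p)+d(p,c(s_i))$ equals $d_i$ on the geodesic from $c(s_{i-1})$ to $c(s_i)$ and takes strictly larger values at points slightly off it (e.g.\ on a short continuation of the geodesic past $c(s_i)$), so the intermediate value theorem along a continuous path in $X$ joining these two locations delivers such a $p_i$. Since $f_i(p_i)=\ell_i<\varepsilon$, both of $d(c(s_{i-1}),p_i)$ and $d(p_i,c(s_i))$ are $<\varepsilon$, so each pair lies in $E_\varepsilon\subset E$ and the insertion is a legal $E$-homotopy step. Then $\lambda:=\{c(s_0),p_1,c(s_1),p_2,\dots,c(s_m)\}$ is an $E$-loop, $E$-homotopic to $\mu$ and hence (by Lemma \ref{subdiv}) to every $E$-subdivision of $c$, with $L(\lambda)=\sum_i\ell_i=L(c)$.

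The step I expect to be most delicate is verifying the existence of $p_i$: one must genuinely have $\ell_i$ in the range of $f_i$, which in a geodesic space with chained $E$ I expect to follow from continuously extending the geodesic beyond $c(s_i)$ by $(\ell_i-d_i)/2$ and invoking IVT, but care is needed when $c(s_i)$ has no room to extend. In those cases one refines the partition further and replaces a single $p_i$ by several small bumps whose added lengths sum to $\ell_i-d_i$; the construction then still yields $L(\lambda)=L(c)$ and a chain $E$-homotopic to $\mu$ via legal insertions. Everything else---the lift, the length identities, and the translation between chains and curves---is routine bookkeeping with Lemmas \ref{metric}, \ref{strex}, \ref{subdiv}, \ref{reflem}, and Corollary \ref{finally}, together with the fact that $\phi_E$ is a length-preserving local isometry and that $X_E$ inherits a geodesic metric from $X$.
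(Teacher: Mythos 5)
Your Part~1 is essentially the paper's argument: lift $\lambda$ to an $E^{\ast}$-chain in $X_E$ equipped with the lifted geodesic metric, fill in with geodesics, push down by the length-preserving local isometry $\phi_E$, and use uniqueness of lifts together with Proposition \ref{ender} to identify the class of any $E$-subdivision of $c$ with $[\lambda]_E$. That half is correct.

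Part~2, however, has a genuine gap at exactly the step you flagged: the point $p_i$ with $d(c(s_{i-1}),p_i)+d(p_i,c(s_i))=\ell_i$ need not exist. Your intermediate-value witness is a point on a continuation of the geodesic past $c(s_i)$, but geodesics in a geodesic space need not extend, and even when they do, you must exhibit a point where $f_i\geq\ell_i$, whereas $\sup_p f_i(p)\leq 2\,\mathrm{diam}(X)$ can be strictly smaller than $\ell_i$. Concretely, let $X$ be a circle of circumference $\rho$, let $E=X\times X$ (a chained entourage, with $E_\varepsilon\subset E$ for every $\varepsilon$), let $c$ wrap around four times, and take the partition $\{0,2\rho,4\rho\}$; this satisfies every requirement you impose, yet $\ell_i=2\rho$, $c(s_{i-1})=c(s_i)$, and $f_i(p)=2d(c(s_i),p)\leq\rho<\ell_i$ for every $p\in X$. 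The ``several small bumps'' fallback is not a proof as it stands: each further insertion must remain legal for $E$ and the amount of length a single insertion can add is bounded by $\sup_p f_i(p)-d_i$, which is the very quantity in question. A repair is available---e.g.\ additionally require $\ell_i<\mathrm{diam}(X)/2$, so that $\sup_p f_i(p)\geq\sup_p d(c(s_{i-1}),p)\geq\mathrm{diam}(X)/2>\ell_i$ and the intermediate value theorem applies along a path from the geodesic midpoint to a suitably far point, with no extension of geodesics needed---but you did not supply it. The paper sidesteps all of this with a different padding device: starting from an $F$-subdivision $\omega$ for an \emph{open} entourage $F\subset E$ with total deficit $\delta=L(c)-L(\omega)$, it fixes one consecutive pair $x_i,x_{i+1}$, chooses $n$ large enough that $B(x_i,2\varepsilon)\subset B(x_i,F)\cap B(x_{i+1},F)$ with $\varepsilon=\delta/(2n)$, picks a single point $z$ on a geodesic from $x_i$ to $x_{i+1}$ with $d(x_i,z)=\delta/(2n)$, and inserts $z,x_i,z,x_i,\dots$ alternately; each pair of legal insertions adds exactly $2d(x_i,z)$, so $n$ pairs add exactly $\delta$. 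You should either adopt that zigzag or prove the needed lower bound on $\sup_p f_i$ explicitly.
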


\begin{proof}
For the first part, let $X_{E}$ have the lifted geodesic metric (see the end
of the second section). Let $\widetilde{\lambda}$ be the unique lift of
$\lambda$ at a point $[\alpha]_{E}\in X_{E}$, which is an $E^{\ast}$-chain. By
Lemma \ref{metric}, $L(\widetilde{\lambda})=L(\lambda)$. Since $X_{E}$ is a
geodesic space we may join each point in $\widetilde{\lambda}$ to its
successor by a geodesic, concatenating to produce a curve $\widetilde{c}$ such
that $L(\widetilde{c})=L(\widetilde{\lambda})=L(\lambda)$. Since $\phi_{E}$
preserves the lengths of curves, $c:=\phi_{E}(\widetilde{c})$ has the same
length as $\lambda$. Moreover, the endpoint of $\widetilde{c}$, which by
uniqueness is the lift of $c$ at $[\alpha]_{E}$, is by construction the same
as the endpoint of $\widetilde{\lambda}$. That endpoint is equal to
$[\alpha\ast\lambda]_{E}$ by Proposition \ref{ender}.

Now let $\lambda^{\prime}$ be any $E$-subdivision of $c$; that is, $c$ is a
stringing of $\lambda^{\prime}$. Again by uniqueness of lifts, the lift
$\widetilde{\lambda^{\prime}}$ of $\lambda^{\prime}$ at $[\alpha]_{E}$ lies on
$c$ and also ends at $[\alpha\ast\lambda]_{E}$. By Proposition \ref{ender},
$[\alpha\ast\lambda^{\prime}]_{E}=[\alpha\ast\lambda]_{E} $, and hence
$[\lambda^{\prime}]_{E}=[\lambda]_{E}$, completing the proof of the first part.

For the second part, let $F$ be an open entourage contained in $E$ begin with
any $F$-subdivision $\omega$ of $c$. By definition of the length of a curve
$L(\omega)\leq L(c)$. If the lengths are equal, we are finished. Otherwise,
suppose that $\delta:=L(c)-L(\omega)>0$ and choose any point $x_{i}$ and its
successor $x_{i+1}$ on $\omega$. Since $F$ is open we may choose $n$ large
enough that, setting $\varepsilon:=\frac{\delta}{2n}$, $B(x_{i},F)\cap
B(x_{i+1},F)$ contains $B(x_{i},2\varepsilon)$. Because the space is geodesic,
we may pick a point $z\in B(x_{i},2\varepsilon)$ so that $d(x_{i}%
,z)=\frac{\delta}{2n}$ (for example just apply the Intermediate Value Theorem
to any geodesic from $x_{i}$ to $x_{i+1}$). Now we make the following legal
moves for an $E$-homotopy:%
\[
\{x_{i},\overbrace{z},x_{i+1}\rightarrow\{x_{i},z,\overbrace{x_{i}}%
,x_{i+1}\rightarrow\{x_{i},z,x_{i},\overbrace{z},x_{i+1}%
\]%
\[
\rightarrow\{x_{i},z,x_{i},z,\overbrace{x_{i}},x_{i+1}\rightarrow\cdot
\cdot\cdot
\]
Each pair of moves, adding $z$ then $x_{i}$, increases the length of $\lambda$
by precisely $2\delta$. Therefore after $n$ such moves we have attained
precisely $L(c)$.
\end{proof}

Note that the next statement is not true when replacing \textquotedblleft%
$E$-homotopic\textquotedblright\ with \textquotedblleft
homotopic\textquotedblright, for example if the space is not semi-locally
simply connected.

\begin{proposition}
\label{uniform}Let $c_{i}:[0,1]\rightarrow X$ be a sequence of curves in a
compact metric space $X$ of uniformly bounded length, uniformly convergent to
$c:[0,1]\rightarrow X$, and let $E$ be any chained entourage.

\begin{enumerate}
\item If the $c_{i}$ have the same start and endpoints then $c_{i}$ is
$E$-homotopic to $c$ for all large $i$.

\item If the $c_{i}$ are closed then $c_{i}$ is freely $E$-homotopic to $c$
for all large $i$.
\end{enumerate}
\end{proposition}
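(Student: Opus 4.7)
The plan is to reduce both parts to Proposition \ref{close2} and Corollary \ref{closefree} by producing a single partition of $[0,1]$ whose evaluations give $E$-subdivisions of $c$ and of each $c_i$ (for large $i$), with the two chains converging pointwise.

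Fix $\varepsilon>0$ with $E_{\varepsilon}\subset E$ and $\varepsilon'>0$ with $3\varepsilon'<\varepsilon$. Using uniform continuity of $c$, pick $\delta>0$ so that $|s-t|<\delta$ implies $d(c(s),c(t))<\varepsilon'$, and a partition $0=t_0<\cdots<t_n=1$ of mesh less than $\delta$. Then $\alpha:=\{c(t_0),\ldots,c(t_n)\}$ is an $E_{\varepsilon'}$-subdivision of $c$, hence an $E$-subdivision. By uniform convergence, for all sufficiently large $i$, $\sup_{s}d(c_i(s),c(s))<\varepsilon'$; the triangle inequality then gives $d(c_i(s),c_i(t_j))<3\varepsilon'<\varepsilon$ for $s\in[t_j,t_{j+1}]$, with the analogous bound for $c_i(t_{j+1})$. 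Thus $\alpha_i:=\{c_i(t_0),\ldots,c_i(t_n)\}$ is an $E$-chain and an $E$-subdivision of $c_i$, with $\alpha_i\to\alpha$ pointwise.

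For part 1, the common-endpoint hypothesis together with uniform convergence forces $c_i(t_0)=c(t_0)$ and $c_i(t_n)=c(t_n)$, so $\alpha_i$ and $\alpha$ share endpoints. Proposition \ref{close2} then gives, for large $i$, that $\alpha_i$ is $E$-homotopic to some $E$-refinement of $\alpha$, which by Lemma \ref{reflem} is $E$-homotopic to $\alpha$ itself; Definition \ref{ehomc} then yields $c_i$ $E$-homotopic to $c$. For part 2, each $\alpha_i$ is now an $E$-loop and $\alpha$ is an $E$-loop (at its own basepoint), so Corollary \ref{closefree} gives $\alpha_i$ freely $E$-homotopic to any $E$-refinement of $\alpha$; combined with Lemma \ref{reflem} and Definition \ref{ehomc}, this upgrades to $c_i$ freely $E$-homotopic to $c$.

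The main obstacle is the bookkeeping needed to show that $\alpha_i$ is a genuine $E$-subdivision (and not merely an $E$-chain) of $c_i$: one needs each subarc $c_i([t_j,t_{j+1}])$ to lie in $B(c_i(t_j),E)\cap B(c_i(t_{j+1}),E)$, which is why the proof budgets the factor of three in $3\varepsilon'<\varepsilon$ and simultaneously exploits uniform continuity of $c$ and uniform convergence $c_i\to c$. The uniform length bound on the $c_i$ does not appear to enter this proof.
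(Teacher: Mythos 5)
Your proof is correct, but it takes a genuinely different and in fact leaner route than the paper's. The paper argues by contradiction on a subsequence: it takes a separate $E_{\varepsilon}$-subdivision $\lambda_{i}$ of each $c_{i}$ (each with its own partition), uses Lemma \ref{refineit} together with the uniform length bound to control $\nu(\lambda_{i})$, applies the Chain Normalizing Lemma \ref{norm} to extract a pointwise-convergent subsequence with a common number of points, and then needs the Ball Continuity Lemma \ref{bc} to verify that the limit chain is an $\overline{E_{\varepsilon}}$-subdivision of $c$ before invoking Proposition \ref{close2} (resp.\ Corollary \ref{closefree}). You instead fix a single partition adapted to $c$ via uniform continuity, and use uniform convergence with the $3\varepsilon'<\varepsilon$ budget to show the \emph{same} partition yields $E$-subdivisions $\alpha_{i}$ of $c_{i}$ for large $i$, converging termwise to the $E$-subdivision $\alpha$ of $c$; this feeds directly into Proposition \ref{close2} and Corollary \ref{closefree} with no subsequence extraction, no normalization, and no use of Lemma \ref{bc}. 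Your observation that the uniform length bound is never used is accurate and is consistent with the paper's own remark following the proposition, which sketches a similar single-partition idea precisely to remove that hypothesis. Two small points worth making explicit: in quoting ``some (hence any) $E$-refinement of $\alpha$'' you should choose the refinement to be a genuine $E$-refinement of $\alpha$ regarded as an $E$-chain (which exists since $E$ is chained and $\alpha$ is an $E_{\varepsilon'}$-chain), so that Lemma \ref{reflem} collapses it back to $\alpha$ by an $E$-homotopy rather than merely an $\overline{E}$-homotopy; and in Part 1 the fixed-endpoint format required by Proposition \ref{close2} holds because the common endpoints of the $c_{i}$ must coincide with those of $c$ by pointwise convergence, as you note.
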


\begin{proof}
If the statement were not true then we could, by taking a subsequence, assume
that for all $i$, $c_{i}$ is never $E$-homomotopic to $c$. Let $\varepsilon>0$
be such that $\overline{E_{\varepsilon}}\subset E$. For every $i$, let
$\lambda_{i}=\{x_{0i}=c_{i}(0),x_{1i}=c_{i}(t_{1i}),...,x_{n_{i}i}=c_{i}(1)\}$
be an $E_{\varepsilon}$-subdivision of $c_{i}$, where $0\leq t_{1i}\leq
\cdot\cdot\cdot\leq t_{\left(  n_{i}-1\right)  i}\leq1$. By Lemma
\ref{refineit}, we may assume that these chains have bounded length, and hence
we may apply the Chain Normalizing Lemma (Lemma \ref{norm}). That is, we may
assume that $ni=n$ for some fixed $n$ and all $i$, and $x_{ji}\rightarrow
x_{j}$ for some $x_{j}$, for all $j$. Since $c_{i}$ converges uniformly to
$c$, $x_{j}=c(t_{j})$ for some $t_{j}\in\lbrack0,1]$ with $0\leq t_{1}%
\leq\cdot\cdot\cdot\leq t_{n}=1$. We will next show that $\lambda
=\{x_{0},...,x_{n}\}$ is an $\overline{E_{\varepsilon}}$-subdivision, hence an
$E$-subdivision, of $c$. Let $c_{ji}$ denote the restriction of $c_{i}$ to
$[t_{ji},t_{(j+1)i}]$; by definition of subdivision, the image of $c_{ji}$
lies in $B(x_{ji},E)\cap B(x_{(j+1)i},E)$. Since $\{c_{i}\}$ converges
uniformly, the Ball Continuity Lemma (Lemma \ref{bc}) implies that if $c_{j}$
denotes the restriction of $c$ to $[t_{j},t_{j+1}]$ then $c_{j}$ lies in
$B(x_{j},\overline{E_{\varepsilon}})\cap B(x_{j+1},\overline{E_{\varepsilon}%
})$. This shows that $\lambda$ is an $\overline{E_{\varepsilon}}$-subdivision
of $c$. It now follows from Proposition \ref{close2} that $\lambda$ is an
$\overline{E_{\varepsilon}}$-loop, hence an $E$-loop that is $\varepsilon
$-homotopic (hence $E$-homotopic) to $\lambda_{i}$ for all large $i$. That is,
$c_{i}$ is $E$-homotopic to $c$, completing the proof of the first part.

The proof of the second part is the same as the proof of the first, using
Corollary \ref{closefree} rather than Proposition \ref{close2}.
\end{proof}

\begin{remark}
We do not need it for this paper, but the above statement may be proved with a
little more work without the condition that the curves have uniformly bounded
length. The idea is that even if the curves do not have uniformly bounded
length, the fact that the convergence is uniform allows one to divide any
$c_{i}$ into a concatenation of segments, the number of which is uniformly
bounded and each of which lies in an $\frac{\varepsilon}{2}$-ball. Each of
those segments then has an $E_{\varepsilon}$-subdivision consisting only of
two points, and those together provide an $E_{\varepsilon}$-subdivision of
$c_{i}$ with length bounded independent of $i$.
\end{remark}

The next definition extends a notion from \cite{PW1}:

\begin{definition}
\label{triaddef}If $X$ is a uniform space and $E$ is an entourage, an $E$-loop
of the form $\lambda=\alpha\ast\tau\ast\overline{\alpha}$, where $\nu(\tau
)=3$, will be called $E$-small. Note that this notation includes the case when
$\alpha$ consists of a single point--i.e. $\lambda=\tau$. In this case, we
will call $\lambda$ an $E$-triad.
\end{definition}

Note that any $E$-small loop is $E$-null since two of the points in $\tau$ may
be removed one by one, followed by the points in $\alpha$ and $\overline
{\alpha}$.

The next essential proposition is an extension of Proposition 29 in
\cite{PW1}, with a similar proof (essencially replacing $\varepsilon$ by $E$
and $\delta$ by $D$). However, we write out the complete proof here so it is
clear how our new definition of refinement is used (including the obvious fact
that the reversal of a refinement is a refinement).

\begin{proposition}
\label{deltaimp}Let $X$ be a uniform space, $D$ be a chained entourage in $X$
and $E\subset D$ be an entourage. Suppose $\alpha,\beta$ are $E$-chains and
$\left\langle \gamma_{0},...,\gamma_{n}\right\rangle $ is a $D$-homotopy such
that $\gamma_{0}=\alpha$ and $\gamma_{n}=\beta$. Then $[\beta]_{E}%
=[\lambda_{1}\ast\cdot\cdot\cdot\ast\lambda_{r}\ast\alpha\ast\lambda_{r+1}%
\ast\cdot\cdot\cdot\ast\lambda_{n}]_{E}$, where each $\lambda_{i}$ is an
$E$-refinement of a $D$-small loop.
\end{proposition}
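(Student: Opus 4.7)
The plan is to prove this by induction on $n$, the length of the $D$-homotopy.

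\textbf{Base case ($n = 0$).} Here $\alpha = \beta$ and no $\lambda_{i}$'s are needed; the identity holds vacuously.

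\textbf{Inductive step.} Assume the statement holds for $D$-homotopies of length less than $n$. Consider the first basic move $\alpha = \gamma_{0}\rightarrow\gamma_{1}$; without loss of generality it is an insertion (the removal case is dual). Writing $\alpha=\{x_{0},\ldots,x_{m}\}$ and $\gamma_{1}=\{x_{0},\ldots,x_{j},x,x_{j+1},\ldots,x_{m}\}$, legality of the move gives $(x_{j},x),(x,x_{j+1})\in D$. I would then form the $D$-small loop
\[
\mu_{1}=\{x_{0},\ldots,x_{j}\}\ast\{x_{j},x,x_{j+1},x_{j}\}\ast\{x_{j},\ldots,x_{0}\}
\]
based at $x_{0}$, and use the fact that $D$ is chained to choose an $E$-refinement $\lambda_{1}$ of $\mu_{1}$. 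Since $\alpha$ is already an $E$-chain, only the (possibly) non-$E$ edges introduced by the triad require genuine refinement; the remaining edges of $\mu_{1}$ (coming from $\alpha$ and its reverse) are already in $E$ and are left alone.

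\textbf{Key claim.} $\lambda_{1}\ast\alpha$ is $E$-homotopic to a specific $E$-refinement $\alpha_{1}$ of $\gamma_{1}$. The reason is that when we write out $\lambda_{1}\ast\alpha$ and examine the interior, the portion $\{x_{j+1},x_{j},x_{j-1},\ldots,x_{0},x_{1},\ldots,x_{j},x_{j+1}\}$ is of the form $\overline{\sigma}\ast\sigma$ with $\sigma=\{x_{0},\ldots,x_{j+1}\}$ an $E$-chain. Such a round-trip is $E$-null: one removes points pairwise from the middle outward using only basic $E$-moves (each removal is legal because consecutive pairs in $\sigma$ and $\overline{\sigma}$ lie in $E$). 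Collapsing the round-trip leaves precisely $\alpha_{1}$, which agrees with $\gamma_{1}$ outside the triad region and incorporates the $E$-refinement of any non-$E$ edge of the triad in place of that edge.

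\textbf{Reduction and application of the IH.} I would now lift the tail $\gamma_{1}\rightarrow\gamma_{2}\rightarrow\cdots\rightarrow\gamma_{n}=\beta$ (a $D$-homotopy of length $n-1$) to a $D$-homotopy $\alpha_{1}\rightarrow\alpha_{2}\rightarrow\cdots\rightarrow\alpha_{n}=\beta$ of the same length $n-1$, where each $\alpha_{i}$ is a compatibly chosen $E$-refinement of $\gamma_{i}$. Compatibility means the added refinement points in each $\alpha_{i}$ are carried forward so that the basic move connecting $\gamma_{i}$ to $\gamma_{i+1}$ induces a single basic move connecting $\alpha_{i}$ to $\alpha_{i+1}$. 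The IH then supplies $\lambda_{2},\ldots,\lambda_{n}$ (each an $E$-refinement of a $D$-small loop) and an index $r$ with $[\beta]_{E}=[\lambda_{2}\ast\cdots\ast\lambda_{r+1}\ast\alpha_{1}\ast\lambda_{r+2}\ast\cdots\ast\lambda_{n}]_{E}$, and substituting $[\alpha_{1}]_{E}=[\lambda_{1}\ast\alpha]_{E}$ gives the stated formula (after relabeling $r$).

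\textbf{Main obstacle.} The serious bookkeeping is in the lifting step: guaranteeing that the refined homotopy has length exactly $n-1$ and not more. The danger is that a basic move of the original homotopy may act at a position whose neighbors in the refined chain are refinement points rather than the original points, so the naive "same move" is not legal as a $D$-basic move in the refined chain. Handling this requires choosing refinements so that each non-$E$ edge is refined once and its refinement is tracked (or, when a move interacts with such a region, reversing the relevant piece of refinement first and re-applying it afterward as part of the same compatibly-chosen $\alpha_{i}$, without increasing the overall count of basic moves). Once this compatibility is set up, the round-trip collapse used in the Key claim and the induction go through cleanly, and the symmetric argument handles the removal case as well as basic moves whose natural associated $D$-small loop is more naturally based at the right endpoint (which is where the flexibility in the split index $r$ enters).
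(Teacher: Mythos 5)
Your base case and your ``Key claim'' are sound, and the splice-and-cancel computation in the Key claim is exactly the mechanism the paper's proof uses. The genuine gap is in the reduction step. You want to lift the tail $\gamma_{1}\rightarrow\cdots\rightarrow\gamma_{n}$ to a $D$-homotopy $\alpha_{1}\rightarrow\cdots\rightarrow\alpha_{n}=\beta$ of length $n-1$ in which each $\alpha_{i}$ is an $E$-refinement of $\gamma_{i}$, so that the inductive hypothesis (available only for homotopies of length strictly less than $n$) applies. Such a lift does not exist in general. If the move $\gamma_{i}\rightarrow\gamma_{i+1}$ inserts $y$ between $x_{p}$ and $x_{p+1}$, then in $\alpha_{i}$ the edge $(x_{p},x_{p+1})$ has already been replaced by a refining chain $\{x_{p},m_{1},\ldots,m_{s},x_{p+1}\}$, while $\alpha_{i+1}$ must contain $E$-refinements of the two new edges $(x_{p},y)$ and $(y,x_{p+1})$; passing from one to the other is not a single basic move. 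Inserting $y$ between two of the $m_{t}$'s keeps the count at one move, but the result is neither an $E$-chain nor an $E$-refinement of $\gamma_{i+1}$ (the pairs $(m_{t},y)$ lie only in $D$, and the $m_{t}$ need not lie in $B(y,D)$), so the final chain is not $\beta$. Your alternative --- unwinding the old refinement and rebuilding the new one --- restores correctness but costs many basic moves per original step, so the lifted homotopy has length well over $n-1$ and the induction on $n$ cannot close. The assertion that this can be done ``without increasing the overall count of basic moves'' is the missing, and in fact false, step.

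The repair is to stop trying to make the refined chains into a $D$-homotopy at all. Strengthen the inductive statement to: for each $k\leq n$ there is an $E$-refinement $\gamma_{k}^{\prime}$ of $\gamma_{k}$ with $[\gamma_{k}^{\prime}]_{E}=[\lambda_{1}\ast\cdots\ast\alpha\ast\cdots\ast\lambda_{k}]_{E}$, each $\lambda_{i}$ an $E$-refinement of a $D$-small loop. The step from $k$ to $k+1$ then constructs $\gamma_{k+1}^{\prime}$ from $\gamma_{k}^{\prime}$ outright, by splicing $E$-refinements of $\{x_{i},x\}$ and $\{x,x_{i+1}\}$ into $\gamma_{k}^{\prime}$, and verifies $[\gamma_{k+1}^{\prime}]_{E}=[\mu\ast\kappa\ast\overline{\mu}\ast\gamma_{k}^{\prime}]_{E}$ by exactly your round-trip cancellation, where $\mu$ is the initial segment of $\gamma_{k}^{\prime}$ ending at $x_{i}$ and $\kappa$ runs out through the new refinement and back through the old refinement of $(x_{i},x_{i+1})$; the loop $\mu\ast\kappa\ast\overline{\mu}$ is an $E$-refinement of a $D$-small loop. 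No lifting of basic moves is needed, and at $k=n$ one invokes Lemma \ref{reflem} to replace $\gamma_{n}^{\prime}$ by the $E$-chain $\beta$. This is the paper's proof; your argument is the same local computation, but anchored to an induction that does not go through.
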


\begin{proof}
We will prove by induction that for every $k\leq n$, an $E$-refinement
$\gamma_{k}^{\prime}$ of $\gamma_{k}$ is $E$-homotopic to $\lambda_{1}%
\ast\cdot\cdot\cdot\ast\alpha\ast\cdot\cdot\cdot\ast\lambda_{k}$, where each
$\lambda_{i}$ is an $E$-refinement of a $D$-small loop. Since any two
$D$-refinements of an $E$-chain are $E$-homotopic (Lemma \ref{reflem}), this
will complete the proof.

The case $k=0$ is trivial. Suppose the statement is true for some $0\leq k<n$.
The points required to $E$-refine $\gamma_{k}$ to $\gamma_{k}^{\prime}\ $will
be denoted by $m_{i}$. Suppose that $\gamma_{k+1}$ is obtained from
$\gamma_{k}$ by adding a point $x$ between $x_{i}$ and $x_{i+1}$. Let
$\{x_{i},a_{1},...,a_{k},x\}$ be an $E$-refinement of $\{x_{i},x\}$ and
$\{x,b_{1},...,b_{m},x_{i+1}\}$ be an $E$-refinement of $\{x,x_{i+1}\}$, so
\[
\gamma_{k+1}^{\prime}:=\{x_{0},m_{0},...,x_{i},a_{1},...,a_{k},x,b_{1}%
,...,b_{m},x_{i+1},m_{r},...,x_{j}\}
\]
is an $E$-refinement of $\gamma_{k+1}$. Defining $\mu_{k+1}:=\{x_{0}%
,m_{0},...,x_{i}\}$ and
\[
\kappa_{k+1}=\{x_{i},a_{1},...,a_{k},x,b_{1},...,b_{m},x_{i+1},m_{r}%
,...,x_{i}\}
\]
we have
\[
\left[  \gamma_{k+1}^{\prime}\right]  _{E}=\left[  \mu_{k+1}\ast\kappa
_{k+1}\ast\overline{\mu_{k+1}}\ast\gamma_{k}^{\prime}\right]  _{E}%
\]
and since the homotopy is a $D$-homotopy, $\lambda_{k+1}:=\mu_{k+1}\ast
\kappa_{k+1}\ast\overline{\mu_{k+1}}$ is an $E$-refinement of a $D$-small
loop. The case when a point is removed from $\gamma_{k}$ is similar, except
that the refined $D$-small loop is multiplied on the right.
\end{proof}

\begin{corollary}
\label{gens}Let $X$ be a uniform space, $D$ be a chained entourage in $X$ and
$E\subset D$ be an entourage. Then $\ker\theta_{DE}$ is equal to the subgroup
of $\pi_{E}(X)$ generated by all $E$-homotopy classes of $E$-refinements of
$D$-small loops.
\end{corollary}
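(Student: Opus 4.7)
The plan is to prove both inclusions, using Proposition \ref{deltaimp} as the main tool.

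\textbf{The easy inclusion.} First I would show that every $E$-homotopy class of an $E$-refinement of a $D$-small loop lies in $\ker\theta_{DE}$. Let $\mu$ be a $D$-small loop at the basepoint and $\mu'$ an $E$-refinement of $\mu$. Since $E\subset D$, the chain $\mu'$ is also a $D$-refinement of $\mu$, so by Lemma \ref{reflem} we have $[\mu']_D=[\mu]_D$. As noted immediately after Definition \ref{triaddef}, every $D$-small loop is $D$-null, so $[\mu]_D=[\ast]_D$. Therefore $\theta_{DE}([\mu']_E)=[\mu']_D=[\ast]_D$, i.e. $[\mu']_E\in\ker\theta_{DE}$. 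Because $\ker\theta_{DE}$ is a subgroup of $\pi_E(X)$, the entire subgroup it generates is contained in $\ker\theta_{DE}$.

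\textbf{The main inclusion.} Conversely, suppose $[\lambda]_E\in\ker\theta_{DE}$. This means $\lambda$, viewed as a $D$-chain, is $D$-homotopic to the trivial chain $\{\ast\}$, so there is a $D$-homotopy $\langle\gamma_0=\lambda,\gamma_1,\ldots,\gamma_n=\{\ast\}\rangle$ between these two $E$-chains. Applying Proposition \ref{deltaimp} with $\alpha=\lambda$ and $\beta=\{\ast\}$ yields
\[
[\{\ast\}]_E=[\lambda_1\ast\cdots\ast\lambda_r\ast\lambda\ast\lambda_{r+1}\ast\cdots\ast\lambda_n]_E,
\]
where each $\lambda_i$ is an $E$-refinement of a $D$-small loop (and, by the explicit construction in the proof of Proposition \ref{deltaimp}, each $\lambda_i$ is a loop at the basepoint since it has the form $\mu_i\ast\kappa_i\ast\overline{\mu_i}$ with $\mu_i$ starting at the basepoint). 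Solving inside the group $\pi_E(X)$, I get
\[
[\lambda]_E=[\lambda_r]_E^{-1}\cdots[\lambda_1]_E^{-1}\,[\lambda_{r+1}]_E^{-1}\cdots[\lambda_n]_E^{-1}.
\]
Since $[\lambda_i]_E^{-1}=[\overline{\lambda_i}]_E$, and the reversal of an $E$-refinement of a $D$-small loop $\alpha\ast\tau\ast\overline{\alpha}$ is an $E$-refinement of $\overline{\alpha}\ast\overline{\tau}\ast\alpha$ (again a $D$-small loop, with $\nu(\overline{\tau})=3$), each factor on the right is the $E$-homotopy class of an $E$-refinement of a $D$-small loop. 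Thus $[\lambda]_E$ lies in the subgroup generated by these classes.

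\textbf{Where the work is.} The serious content is already packaged inside Proposition \ref{deltaimp}; the corollary is then a formal rearrangement in $\pi_E(X)$. The only minor points that require attention are verifying that the $\lambda_i$ really are loops based at $\ast$ (which is transparent from the construction in Proposition \ref{deltaimp}), and that the collection of $E$-refinements of $D$-small loops is stable under reversal so that inverses in $\pi_E(X)$ remain within the generating set. No further estimates or topological arguments are needed.
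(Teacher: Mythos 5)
Your proof is correct and follows essentially the same route as the paper: both directions rest on Proposition \ref{deltaimp} (for the hard inclusion) and on the facts that $D$-small loops are $D$-null and refinements preserve the $D$-homotopy class (for the easy one); you merely spell out the group-theoretic rearrangement that the paper leaves implicit. The only blemishes are cosmetic: the factors $[\lambda_{r+1}]_E^{-1}\cdots[\lambda_n]_E^{-1}$ should appear in reversed order, and the reversal of $\alpha\ast\tau\ast\overline{\alpha}$ is $\alpha\ast\overline{\tau}\ast\overline{\alpha}$, but neither affects membership in the generated subgroup.
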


\begin{proof}
It follows from Proposition \ref{deltaimp} that every element of $\ker
\theta_{ED}$ (i.e. an $E$-homotopy class of an $E$-loop that is $D$-null) is a
product of $E$-homotopy classes of $E$-refinements of $D$-small loops. On the
other hand, any concatenation of $E$-refinements of $D$-small loops is
$D$-null and hence its $E$-homotopy class is in $\ker\theta_{DE}$.
\end{proof}

Further extending the results of \cite{PW1} we define for any entourage $E$ in
a metric space $X$, a map from fixed-endpoint homotopy classes of continous
paths to $E$-homotopy classes of $E$-chains as follows. Suppose
$c:[0,1]\rightarrow X$ is continuous. We set $\Lambda_{E}([c]):=[\alpha]_{E}
$, where $\alpha$ is any $E$-subdivision of $c$. By Lemma \ref{subdiv},
$\Lambda_{E}$ is well-defined. Note that if $E$ is a chained entourage then by
Lemma \ref{strex} every $E$-chain $\alpha$ may be assumed, up to $E$-homotopy,
to have a stringing $\overline{\alpha}$. By definition $\Lambda_{E}%
(\overline{\alpha})=[\alpha]_{E}$; that is, $\Lambda_{E}$ is surjective.
Restricting $\Lambda_{E}$ to the fundamental group at any base point yields a
homomorphism $\pi_{1}(X)\rightarrow\pi_{E}(X)$ that we will also refer to as
$\Lambda_{E}$.

Continuing to assume that $E$ is a chained entourage, fix a basepoint and
suppose that $[c]\in\ker\Lambda_{E}$. In other words, any $E$-subdivision
$\{c(t_{0}),...,c(t_{n})=c(t_{0})\}:=\lambda$ is $E$-null. Taking $D=E$ in
Proposition \ref{deltaimp}, we see that $\lambda$ is $E$-homotopic to a
product of $E$-small $E$-loops. Since $\Lambda_{E}$ is a homomorphism, $c$ is
homotopic to the concatenation of stringings of $E$-small loops. We have shown:

\begin{theorem}
\label{ker}Let $X$ be a Peano continuum that has a (compatible) geodesic
metric and $E$ be a chained entourage. Then for any basepoint, $\Lambda
_{E}:\pi_{1}(X)\rightarrow\pi_{E}(X)$ is a surjective homomorphism and
$\ker\Lambda_{E}$ is the subgroup of $\pi_{1}(X)$ generated by homotopy
classes of stringings of $E$-small loops.
\end{theorem}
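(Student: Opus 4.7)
The plan is to establish the three assertions in sequence: $\Lambda_E$ is a well-defined homomorphism, its surjectivity, and the kernel characterization.

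For well-definedness, two $E$-subdivisions of a fixed curve are $E$-homotopic by Lemma \ref{subdiv}; if $c_1, c_2$ are path-homotopic, then since each $c_i$ is itself a stringing of any of its $E$-subdivisions $\lambda_i$, Corollary \ref{homimp} yields $[\lambda_1]_E = [\lambda_2]_E$. The homomorphism property is immediate from concatenation of $E$-subdivisions. Surjectivity is direct from Lemma \ref{strex}: every $E$-chain $\alpha$ is $E$-homotopic to one admitting a stringing $\hat\alpha$, so $\Lambda_E([\hat\alpha]) = [\alpha]_E$.

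Let $H \subseteq \pi_1(X)$ denote the subgroup generated by homotopy classes of stringings of $E$-small loops. The inclusion $H \subseteq \ker \Lambda_E$ is immediate because every $E$-small loop is $E$-null (as noted after Definition \ref{triaddef}), so $\Lambda_E$ carries each generator of $H$ to the identity. For the reverse inclusion I would construct an auxiliary map $\sigma \colon \pi_E(X) \to \pi_1(X)/H$ sending $[\alpha]_E$ to the class, modulo $H$, of a stringing of an $E$-refinement of $\alpha$. Once $\sigma$ is known to be well-defined, the composition $\sigma \circ \Lambda_E$ equals the natural quotient $q \colon \pi_1(X) \to \pi_1(X)/H$, since a curve $c$ is itself a stringing of any of its $E$-subdivisions; consequently $\ker \Lambda_E \subseteq \ker q = H$.

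The hard part is showing $\sigma$ is well-defined, which requires invariance under three sources of choice: stringing, refinement, and $E$-homotopy representative. The uniform mechanism throughout is the $E$-triad: whenever $(x,y) \in E$, chainedness of $E$ provides a point $z \in B(x,E) \cap B(y,E)$, making $\{x,y,z,x\}$ an $E$-triad whose every stringing (conjugated by a chain to the basepoint) lies in $H$. For two stringings $\gamma_1, \gamma_2$ of an edge $\{x,y\}$, compare two stringings of $\{x,y,z,x\}$ that share the paths $y \to z$ and $z \to x$ but use $\gamma_1$ and $\gamma_2$ respectively for $x \to y$; their ratio in $\pi_1(X)$ equals $\gamma_1 \overline{\gamma_2}$, exhibiting it as a ratio of two elements of $H$. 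Change of refinement reduces to the same device. For an $E$-homotopy one processes basic moves one at a time: inserting a point $x$ between $x_i$ and $x_{i+1}$ alters the stringing, up to path-homotopy, by exactly the $E$-small loop $\mu \ast \tau \ast \overline{\mu}$, where $\mu = \{\ast, \ldots, x_i\}$ is the shared initial chain and $\tau = \{x_i, x_{i+1}, x, x_i\}$ is the newly created $E$-triad; removals are dual. Iterating over all basic moves in an $E$-homotopy from $\lambda$ to the trivial chain forces $[c] \in H$. Throughout, existence of the requisite paths inside ball intersections is not an issue, since we work in the compatible geodesic metric supplied by Bing-Moise: geodesics along $E_\varepsilon$-refinements, exactly as in the proof of Lemma \ref{strex}, furnish all needed stringings.
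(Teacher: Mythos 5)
Your treatment of well-definedness and surjectivity is exactly the paper's (Lemma \ref{subdiv}, Corollary \ref{homimp}, Lemma \ref{strex}). At the kernel step the routes diverge. The paper takes $D=E$ in Proposition \ref{deltaimp} to write any $E$-null subdivision $\lambda$ of $c$ as a product of ($E$-refinements of) $E$-small loops, and then transfers this to $\pi_1(X)$ in a single sentence via the homomorphism property. You instead build the map $\sigma\colon\pi_E(X)\to\pi_1(X)/H$ and verify $\sigma\circ\Lambda_E=q$; your well-definedness check for $\sigma$, carried out by processing the basic moves of an $E$-homotopy one at a time and charging each move to an $E$-triad, is in substance a re-derivation of Proposition \ref{deltaimp} performed upstairs in $\pi_1(X)/H$ rather than in $\pi_E(X)$. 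What your version buys is an explicit justification of the passage from $E$-homotopy classes back to path-homotopy classes, which the paper compresses considerably; what it costs is that you must establish every invariance yourself instead of citing Proposition \ref{deltaimp} and Corollary \ref{gens}.

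Two of those invariances deserve more than you give them. First, ``change of refinement reduces to the same device'' is not a one-step reduction: if $\beta$ is an $F$-refinement of an edge $\{x,y\}$, a stringing of $\beta$ is in general \emph{not} a stringing of $\{x,y\}$, because the concatenated path need not stay inside $B(x,E)\cap B(y,E)$ even though the points of $\beta$ do --- this is precisely the point of Remark \ref{pnc}. Comparing a stringing of $\beta$ with a stringing of $\{x,y\}$ therefore requires an induction over the points of $\beta$, consuming one $E$-triad per step; and since refinement-invariance is also what entitles you to assume that the stringings before and after a basic move agree off the affected segment, that induction must be in place before the basic-move analysis, not folded into it. Second, you form the quotient $\pi_1(X)/H$ without observing that $H$ is normal in $\pi_1(X)$ (it is: conjugating a stringing of $\alpha\ast\tau\ast\overline{\alpha}$ by a loop $g$ yields a stringing of $\beta\ast\tau\ast\overline{\beta}$ for any $E$-subdivision $\beta$ of the concatenation of $g$ with the stringing of $\alpha$). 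Neither point is fatal, but the first is the real crux of the argument and needs to be written out.
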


\begin{remark}
The above theorem actually only requires that $X$ have a geodesic metric
compatible with the uniform structure, see Remark \ref{pnc}.
\end{remark}

\begin{remark}
\label{coverstuff}Note that one may also restrict $\Lambda_{E}$ to the set of
equivalence classes of curves starting a fixed basepoint, which is by
definition the universal covering space of $X$ when $X$ is semi-locally simply
connected. According to Theorem 26.2 in \cite{PW1}, if $X$ is a compact,
semi-locally simply connected geodesic space, then $\Lambda_{E_{\varepsilon}%
}:\pi_{1}(X)\rightarrow\pi_{\varepsilon}(X)$ is length preserving when
$\varepsilon$ is a lower bound for HCS. That is, $\Lambda_{E_{\varepsilon}}$
restricts to a bijection from the universal cover $\widetilde{X}$ of $X$ to
$X_{\varepsilon}$. That is, we may identify the universal cover $\widetilde{X}%
$ of $X$ with $X_{\varepsilon}$ and $\pi_{1}(X)$ with $\pi_{\varepsilon}(X)$.
\end{remark}

\begin{remark}
In the above theorem we see a hint of the relationship between our
construction and the construction of Spanier used by Sormani-Wei, referred to
in the Introduction. For a metric entourage $E_{\varepsilon}$ in a geodesic
space, we may take stringings using geodesics, and such geodesics always
remain in $B(x_{i},\frac{3\varepsilon}{2})$. That is, $\ker\Lambda
_{E_{\varepsilon}}$ is precisely the Spanier subgroup used by Sormani-Wei, and
this is how the equivalence of $\varepsilon$-covers and $\delta$-covers was
proved in \cite{PW2}. One could take the Sormani-Wei use of Spanier a bit
farther by applying it to the covering of a space by entourage balls for a
fixed entourage, but it seems unlikely that the resulting covering maps would
not be equivalent to entourage covers as we have defined them.
\end{remark}

\begin{remark}
As for basepoints, as was observed in \cite{BPUU}, as long as $X$ is chain
connected, the various groups and homomorphisms defined above are independent
of the basepoint, up to natural isomorphisms induced by basepoint change. This
is why we have not included basepoints in our notation. When necessary we can
always assume that all mappings are basepoint-preserving (take basepoints to basepoints).
\end{remark}

\section{Properties of Entourage Covers}

Two chained entourages $E_{1},E_{2}$ in a uniform space will be called
\textit{equivalent if }$\phi_{E_{1}}$ and $\phi_{E_{2}}$ are equivalent as
covering maps.

\begin{proposition}
\label{cover}Let $X$ be a uniform space, $D\subset E$ be chained entourages,
$G$ be a normal subgroup of $\pi_{D}(X)$, and $\pi:X_{D}\rightarrow X_{D}/G=Y$
be the quotient covering map. Then there is a covering map $h:X_{E}\rightarrow
Y$ such that $h\circ\phi_{ED}=\pi$ if and only if $\ker\theta_{ED}\subset G$.
In particular, the covers $\phi_{E}:X_{E}\rightarrow X$ and the induced cover
$\phi:Y\rightarrow X$ are equivalent if and only if $G=\ker\theta_{ED}$.
\end{proposition}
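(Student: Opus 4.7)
The plan is to reduce the biconditional to the universal property of group-action quotients. Using the identification $X_E = X_D/\ker\theta_{ED}$ noted in the paper (in the paragraph preceding Lemma \ref{bc}), the maps $\phi_{ED}: X_D \to X_E$ and $\pi: X_D \to Y$ are both quotient covering maps of $X_D$ by normal subgroups of $\pi_D(X)$, namely $\ker\theta_{ED}$ and $G$. Factoring $\pi$ through $\phi_{ED}$ then amounts to exactly the inclusion $\ker\theta_{ED} \subset G$, which is the content of the main biconditional.

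For the ``if'' direction, assume $\ker\theta_{ED} \subset G$. Define $h: X_E \to Y$ by $h([\alpha]_E) := \pi([\alpha']_D)$ for any preimage $[\alpha']_D \in \phi_{ED}^{-1}([\alpha]_E)$. Any two such preimages differ by an element of $\ker\theta_{ED}$ acting as a deck transformation of $\phi_{ED}$, hence by an element of $G$, so they have a common image under $\pi$; thus $h$ is well-defined and satisfies $h\circ \phi_{ED} = \pi$ by construction. To see $h$ is a covering map, take any $[\alpha]_E \in X_E$ and a neighborhood $U$ evenly covered by $\phi_{ED}$; each sheet of $\phi_{ED}^{-1}(U)$ in $X_D$ lies in an evenly covered neighborhood of $\pi$, and composing shows $U$ is evenly covered by $h$.

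For the ``only if'' direction, suppose such an $h$ exists. Then $\phi_{ED}(x) = \phi_{ED}(y)$ forces $\pi(x) = h(\phi_{ED}(x)) = h(\phi_{ED}(y)) = \pi(y)$, so every $\ker\theta_{ED}$-orbit lies in a single $G$-orbit. Since the deck group $\pi_D(X)$ acts freely on $X_D$, for any $g \in \ker\theta_{ED}$ and any $x \in X_D$ the relation $gx = g' x$ for some $g' \in G$ forces $g = g'\in G$, giving $\ker\theta_{ED} \subset G$. For the ``In particular'' statement, equivalence of $\phi_E$ and $\phi$ yields a homeomorphism $X_E \to Y$ over $X$; after adjusting by a deck transformation of $\phi$ we may arrange $h \circ \phi_{ED} = \pi$, so $\ker\theta_{ED} \subset G$, and the same reasoning applied to $h^{-1}$ gives $G \subset \ker\theta_{ED}$. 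Conversely, if $G = \ker\theta_{ED}$ then $Y$ and $X_E$ are literally the same quotient and the covers coincide. The only non-routine verification throughout is that the set-theoretic $h$ is a genuine covering map, which is handled by compatibility of evenly covered neighborhoods of the two quotient maps.
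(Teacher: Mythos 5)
Your proof is correct and follows essentially the same route as the paper: both directions reduce to the identification $X_{E}=X_{D}/\ker\theta_{ED}$ and the factorization of quotients by nested normal subgroups of the deck group, with the paper citing Spanier for the forward direction where you construct $h$ by hand, and using basepoints where you use orbits and freeness of the deck action. The differences are cosmetic, not substantive.
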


\begin{proof}
If $\ker\theta_{ED}\subset G$ then as mentioned in the background section we
have $X_{E}=X_{D}/\ker\theta_{ED}$ and so $G/\ker\theta_{ED}$ acts properly
discontinuously on $X_{E}$ with quotient space naturally identified with
$X_{D}/G$ (cf. Theorem 1.6.11 in \cite{Sp}). That is, the quotient map%
\[
h:X_{E}=X_{D}/\ker\theta_{ED}\rightarrow X_{E}/(G/\ker\theta_{ED})=X_{D}/G=Y
\]
is a (regular) covering map that by definition satisfies $h\circ\phi_{ED}=\pi$.

Conversely, suppose that there is a covering map $h:X_{E}\rightarrow Y$ such
that $h\circ\phi_{ED}=\pi$. By composing with a covering equivalence we may
suppose that $h$ is basepoint preserving. Now suppose that $\theta
_{ED}([\lambda]_{D})=[\ast]_{E}$. Then since $h$ is basepoint preserving,
$h\circ\theta_{ED}([\lambda]_{D})=\pi([\ast]_{D})$. That is, $[\lambda]_{D}%
\in\pi^{-1}(\pi([\ast]_{E})=G$.
\end{proof}

\begin{corollary}
\label{equiv}Let $D$, $E$, $F$ be chained entourages in a uniform space $X$
with $D\subset E\cap F$. Then

\begin{enumerate}
\item $E$ and $F$ are equivalent if and only if $\ker\theta_{ED}=\ker
\theta_{FD}$.

\item There is a non-trivial covering map $h:X_{E}\rightarrow X_{F}$ if and
only if there is some $F$-triad with a $D$-refinement that is not $E$-null.
\end{enumerate}
\end{corollary}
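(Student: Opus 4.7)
The plan is to reduce both parts to Proposition \ref{cover} and Corollary \ref{gens} by taking $G := \ker\theta_{FD}$. Since $\phi_{FD} : X_D \to X_F$ is precisely the quotient of $X_D$ by $\ker\theta_{FD}$, we may identify $Y = X_D/G$ with $X_F$ and $\pi$ with $\phi_{FD}$. The ``in particular'' clause of Proposition \ref{cover} then yields part 1 directly: $\phi_E$ and $\phi_F$ are equivalent covers if and only if $\ker\theta_{ED} = \ker\theta_{FD}$.

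For part 2, the main clause of Proposition \ref{cover} states that a covering $h : X_E \to X_F$ with $h \circ \phi_{ED} = \phi_{FD}$ exists if and only if $\ker\theta_{ED} \subset \ker\theta_{FD}$; combined with part 1, such an $h$ is non-trivial (not a covering equivalence) precisely when the inclusion is strict. Now apply Corollary \ref{gens}, with the letters $D, E$ of that corollary played here by $F, D$: since $F$ is chained and $D \subset F$, the group $\ker\theta_{FD}$ is generated by $D$-homotopy classes of $D$-refinements of $F$-small loops. Since $\ker\theta_{ED}$ is itself a subgroup, strict containment holds if and only if at least one such generator fails to lie in $\ker\theta_{ED}$ -- that is, if and only if there is some $F$-small loop at $\ast$ whose $D$-refinement is not $E$-null.

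The remaining step is to trade ``$F$-small loop'' for ``$F$-triad''. An $F$-small loop based at $\ast$ has the form $\alpha \ast \tau \ast \overline{\alpha}$ with $\tau$ an $F$-triad based at the endpoint $q$ of $\alpha$. By Lemma \ref{reflem}, any $D$-refinement of this loop is $D$-homotopic to $\alpha' \ast \tau' \ast \overline{\alpha'}$ for arbitrary $D$-refinements $\alpha', \tau'$ of $\alpha, \tau$, and under the isomorphism $\pi_E(X,\ast) \to \pi_E(X,q)$ given by conjugation with $[\alpha']_E$ the class $[\alpha' \ast \tau' \ast \overline{\alpha'}]_E$ corresponds to $[\tau']_E$; so one is trivial if and only if the other is. Conversely, given any $F$-triad $\tau$ based at some $q$ with non-$E$-null $D$-refinement, chain-connectedness of $X$ provides an $F$-chain (in fact a $D$-chain, since $D \subset F$) $\alpha$ from $\ast$ to $q$, and $\alpha \ast \tau \ast \overline{\alpha}$ is the desired $F$-small loop.

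The structural content of the argument lies entirely in Proposition \ref{cover} and Corollary \ref{gens}; the remaining work is routine basepoint bookkeeping between $F$-small loops and $F$-triads, using Lemma \ref{reflem} so that ``a $D$-refinement'' and ``any $D$-refinement'' behave interchangeably. I expect the only mild obstacle to be a careful handling of these basepoint changes, which is why the reduction between $F$-small loops and $F$-triads is written as its own final step.
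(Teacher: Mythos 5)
Your argument is correct and follows essentially the same route as the paper: part 1 is read off from the ``in particular'' clause of Proposition \ref{cover} with $G=\ker\theta_{FD}$, and part 2 reduces strictness of the kernel containment to the existence of a non-$E$-null $D$-refinement of an $F$-small loop (you via Corollary \ref{gens}, the paper directly via Proposition \ref{deltaimp}, which is the same machinery), followed by the conjugation bookkeeping that trades $F$-small loops for $F$-triads. The one delicate point--your claim that strict containment holds if and only if some generator of $\ker\theta_{FD}$ lies outside $\ker\theta_{ED}$ tacitly supplies the inclusion $\ker\theta_{ED}\subset\ker\theta_{FD}$ in the converse direction--is glossed in exactly the same way in the paper's own proof, so it is not a divergence.
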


\begin{proof}
The first statement is an obvious consequence of Proposition \ref{cover},
which also says that there is a non-trivial covering map $h:X_{E}\rightarrow
X_{F}$ if and only if $\ker\theta_{ED}$ is a proper subset of $\ker\theta
_{FD}$. Equivalently, there is an $D$-loop $\lambda$ that is $F$-null but not
$E$-null. Equivalently, by Proposition \ref{deltaimp},
\[
\lbrack\lambda]_{D}=[\lambda_{1}\ast\cdot\cdot\cdot\ast\lambda_{n}]_{D}%
\]
where each $\lambda_{i}$ is a $D$-refinement of an $F$-small loop, at least
one of which is not $E$-null. The proof is now finished by the definition of
$F$-small.
\end{proof}

\begin{remark}
Corollary \ref{equiv} can be considered as an extension of Corollary 31 in
\cite{BPUU} to include the situation when neither $E$ nor $F$ may be contained
in the other, but there is a covering equivalence between $\phi_{E}$ and
$\phi_{F}$.
\end{remark}

\begin{theorem}
\label{firstest}If $X$ is a geodesic space and $\varepsilon>0$ then there is a
set $S\subset\pi_{\varepsilon}(X)$ with $\left\vert S\right\vert \leq
C(X,\frac{\varepsilon}{4})^{40C(X,\frac{\varepsilon}{2})}$ such that if $E$ is
a chained entourage with $E_{\varepsilon}\subset E$ then $\ker\theta
_{EE_{\varepsilon}}$ is the normal closure of some subset of $\Gamma$.
\end{theorem}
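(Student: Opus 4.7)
The plan is to apply Corollary \ref{gens} with the larger entourage $E$ playing the role of $D$ and the fixed metric entourage $E_{\varepsilon}$ playing the role of the subentourage: this gives that $\ker\theta_{EE_{\varepsilon}}$ is the subgroup of $\pi_{\varepsilon}(X)$ generated by $E_{\varepsilon}$-homotopy classes of $E_{\varepsilon}$-refinements of $E$-small loops. Because this subgroup is already normal (it is a kernel), one may first peel off the tails of $E$-small loops: an $E$-small loop has the form $\alpha\ast\tau\ast\overline{\alpha}$ where $\tau$ is an $E$-triad, and if $\beta$ is any other $E_{\varepsilon}$-chain from $\ast$ to the initial point of $\tau$, then $[\alpha\ast\tau\ast\overline{\alpha}]_{E_{\varepsilon}}$ and $[\beta\ast\tau\ast\overline{\beta}]_{E_{\varepsilon}}$ differ by conjugation in $\pi_{\varepsilon}(X)$. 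Thus as a normal subgroup, $\ker\theta_{EE_{\varepsilon}}$ is generated by a single ``standard'' class associated to each $E$-triad.

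To obtain the uniform cardinality bound, I would fix once and for all an $\varepsilon/4$-net $Z\subset X$ of size $C(X,\varepsilon/4)$ containing the basepoint, and define $S$ to be the set of all $[\lambda]_{E_{\varepsilon}}\in\pi_{\varepsilon}(X)$ represented by an $E_{\varepsilon}$-loop $\lambda$ based at $\ast$ whose vertices all lie in $Z$ and whose length is at most $40\,C(X,\varepsilon/2)$. Counting vertex sequences immediately yields $|S|\leq C(X,\varepsilon/4)^{40C(X,\varepsilon/2)}$. For each chained entourage $E$ with $E_{\varepsilon}\subset E$ and each $E$-triad $\tau$, the task is to produce an element of $S$ whose conjugacy class equals that of $[\alpha\ast\tau\ast\overline{\alpha}]_{E_{\varepsilon}}$.

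The construction proceeds in three steps. First, by Lemma \ref{refineit} applied to $\tau$, obtain an $E_{\varepsilon}$-refinement $\tau_{\varepsilon}=\{p_0,\ldots,p_k\}$ of $\tau$ with $k\leq 6\,C(X,\varepsilon/2)$. Second, replace each $p_i$ by a closest net point $q_i\in Z$ (at distance strictly less than $\varepsilon/4$) and insert, between each consecutive pair $q_i,q_{i+1}$, a net approximation of the midpoint of a geodesic from $q_i$ to $q_{i+1}$: the strict inequalities give a genuine $E_{\varepsilon}$-chain $\tau_{\varepsilon}'$ of net points with at most $12\,C(X,\varepsilon/2)$ vertices. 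Third, use Lemma \ref{conna} together with the same net-approximation trick to produce an $E_{\varepsilon}$-chain $\beta$ of net points from $\ast$ to $q_0$ with at most $4\,C(X,\varepsilon/2)$ points, and consider the loop $\beta\ast\tau_{\varepsilon}'\ast\overline{\beta}$, which has at most $20\,C(X,\varepsilon/2)$ vertices, comfortably within the $40\,C(X,\varepsilon/2)$ budget. To verify the conjugacy claim, I would refine to $E_{\varepsilon/2}$-scale and apply Proposition \ref{close} with $F=E_{\varepsilon/2}$, so that $\tau_{\varepsilon}'$ is $E_{\varepsilon}$-homotopic, as a loop at $q_0$, to $\{q_0,p_0\}\ast\tau_{\varepsilon}\ast\{p_k,q_0\}$, hence freely $E_{\varepsilon}$-homotopic to $\tau_{\varepsilon}$ itself. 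Concatenating with $\beta$ and $\overline{\beta}$ then identifies the class in $S$ with a conjugate of $[\alpha\ast\tau\ast\overline{\alpha}]_{E_{\varepsilon}}$.

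Letting $S_E\subset S$ consist of the representatives so produced for all $E$-triads, every element of $S_E$ is $E$-null (it arises from an $E$-triad) and every generator from Corollary \ref{gens} is conjugate to an element of $S_E$, so $\ker\theta_{EE_{\varepsilon}}$ is exactly the normal closure of $S_E$. The principal obstacle is the bookkeeping in steps two and three: one must choose the net finely enough, and place intermediate points carefully enough, that strict inequalities propagate so as to yield a legitimate $E_{\varepsilon}$-chain, and one must verify with Proposition \ref{close} that the net-approximated loop lands in the correct free $E_{\varepsilon}$-homotopy class. The constant $40$ is generous and absorbs all the combinatorial overhead from refinement, tail insertion, and midpoint subdivision.
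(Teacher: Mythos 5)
Your proposal is correct in its essentials and follows the paper's skeleton for the group-theoretic part: both arguments start from Corollary \ref{gens}, observe that changing the tail $\alpha$ of an $E$-small loop $\alpha\ast\tau\ast\overline{\alpha}$ only changes the class by conjugation (so the kernel, being normal, is the normal closure of one standard representative per $E$-triad), and then use Lemma \ref{conna} and Lemma \ref{refineit} to bound the combinatorial size of those representatives by a fixed multiple of $C(X,\frac{\varepsilon}{2})$. Where you genuinely diverge is in how the cardinality bound on $S$ is obtained. The paper takes $S$ to be the set of \emph{all} classes in $\pi_{\varepsilon}(X)$ represented by $\varepsilon$-loops of length at most $10\varepsilon C(X,\frac{\varepsilon}{2})$ and simply cites Theorem 3.2 of \cite{PW1} for the bound $C(X,\frac{\varepsilon}{4})^{40C(X,\frac{\varepsilon}{2})}$; the generators then land in $S$ with no further work, since their representatives already have controlled length. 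You instead define $S$ via loops whose vertices lie in a fixed $\frac{\varepsilon}{4}$-net, which makes the counting trivial but forces you to snap each standard generator onto the net and verify, via midpoint insertion and Proposition \ref{close} at scale $\frac{\varepsilon}{2}$, that the snapped loop stays in the same (free, then based) $E_{\varepsilon}$-homotopy class. That verification is exactly the content of the counting theorem you are bypassing, so your route is more self-contained at the cost of the bookkeeping you acknowledge: in particular Proposition \ref{close} compares chains with equal numbers of points and equal endpoints, so you must coordinate the $E_{\varepsilon/2}$-refinement of $\tau_{\varepsilon}$ with its net approximation point-by-point and handle the non-net endpoint of the triad through the connecting chain $\{q_{0},p_{0}\}$ before conjugating by $\beta$. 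None of these issues is fatal, and your final identification of $\ker\theta_{EE_{\varepsilon}}$ as the normal closure of $S_{E}\subset S$ (each element $E$-null, each generator conjugate into $S_{E}$) is exactly the paper's conclusion; note that the ``$\Gamma$'' in the theorem statement is a typo for $S$, which your write-up silently and correctly repairs.
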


\begin{proof}
Let $S$ be the set of all $[\beta]_{\varepsilon}\subset\pi_{\varepsilon}(X)$
such that $L(\beta)\leq10\varepsilon C(X,\frac{\varepsilon}{2})$. By Theorem
3.2 of \cite{PW1} $\left\vert S\right\vert \leq C(X,\frac{\varepsilon}%
{4})^{40C(X,\frac{\varepsilon}{2})}$. Therefore we need only prove that for
any such $E$, $\ker\theta_{EE_{\varepsilon}}$ is the normal closure of a set
of elements of length at most $10\varepsilon C(X,\frac{\varepsilon}{2})$. By
Corollary \ref{gens}, $\ker\theta_{EE_{\varepsilon}}$ is equal to the subgroup
of $\pi_{\varepsilon}(X)$ generated by the collection of all $[\alpha\ast
\tau\ast\overline{\alpha}]_{\varepsilon}$ where $\tau$ is an $E_{\varepsilon}%
$-refinement of an $E$-triad. But it is an easy algebraic argument that this
means that $\ker\theta_{EE_{\varepsilon}}$ is the normal closure of elements
of the form $\left[  \overline{\alpha_{\tau}}\ast\tau\ast\alpha_{\tau}\right]
_{\varepsilon}$, where (1) $\tau$ is an element of a set $\Gamma$ of
$E_{\varepsilon}$-refinements of $E$-triads that contains exactly one
representitive of each free $E_{\varepsilon}$-homotopy class of
$E_{\varepsilon}$-refinements of $E$-triads, and (2) $\alpha_{\tau}$ is any
$\varepsilon$-chain from the basepoint to the start/end point of $\tau$. In
fact, any generator of $\ker\theta_{EE_{\varepsilon}}$ is conjugate to some
such $\left[  \overline{\alpha_{\tau}}\ast\tau\ast\alpha_{\tau}\right]
_{\varepsilon}$.

According to Lemma \ref{conna} the chains $\alpha_{\tau}$ from the above
paragraph may be chosen to have at most $2C(X,\frac{\varepsilon}{2})$ points.
Likewise, we may produce an $E_{\varepsilon}$-subdivision of $\tau$ having at
most $6C(X,\frac{\varepsilon}{2})$, and therefore there are $E_{\varepsilon}%
$-refinements of the $E$-small loops $\overline{\alpha_{\tau}}\ast\tau
\ast\alpha_{\tau}$ having at most $10C(X,\frac{\varepsilon}{2})$ points and
hence length at most $10\varepsilon C(X,\frac{\varepsilon}{2})$.
\end{proof}

Theorem \ref{T2} now follows from Theorem \ref{firstest} and Proposition
\ref{equiv}.

To prove Theorem \ref{csecs}, suppose that $\sigma$ is any value of CS. By
definition the corresponding $\sigma$-cover is not equivalent to any $\delta
$-cover for $\delta>\sigma$. Now suppose that $E$ is an entourage that
contains $E_{\delta}$ for some $\delta>\sigma$. Then $\phi_{EE_{\sigma}}%
=\phi_{E_{\delta}E}\circ\phi_{E_{\delta}E_{\sigma}}$ and since $\phi
_{E_{\delta}E_{\sigma}}$ is not injective (and all maps are surjective),
$\phi_{EE_{\sigma}}$ is not either. That is, there is at least one chained
entourage (namely $E_{\sigma}$) that contains $E_{\sigma} $ and is not
equivalent to any $E$ with $\sigma(E)>\sigma$, hence $\sigma\in$ ECS.

To prove Theorem \ref{obstruct} we may assume that $X$ is a geodesic space by
the Bing-Moise Theorem. Since $X$ is semilocally simply connected, Corollary
43 of \cite{PW1} implies that for all sufficiently small $\varepsilon$,
$\phi_{\varepsilon}:X_{\varepsilon}\rightarrow X$ is the universal covering
map of $X$ and $\pi_{\varepsilon}(X)=\pi_{1}(X)$. By definition, $\phi_{E}$ is
the covering map corresponding to $\ker_{EE_{\varepsilon}}$ and the proof of
Theorem \ref{obstruct} is finished by Theorem \ref{firstest}.

\begin{remark}
In \cite{BPUU}, Theorem 37, Berestovskii-Plaut showed that for any entourage
$E$ in a compact uniform space, if $\phi_{E}$ is chain connected (which is
true when $E$ is chained according to our new definition) then $\pi_{E}(X)$ is
finitely generated. Essentially the same argument as the proof of Theorem
\ref{obstruct}, together with Remark \ref{finpres}, shows that if $X$ is a
Peano continuum and $E$ is a chained entourage then $\pi_{E}(X)$ is in fact
finitely presented.
\end{remark}

For the proof of Theorem \ref{TD3} we need to some facts from \cite{PW1} and
\cite{PW2} concerning essential circles, along with a new result (Proposition
\ref{systole}). Essential circles are defined (Definition 5, \cite{PW1}) to be
continuous paths of length $L$ that are not $\varepsilon$-null for
$\varepsilon=\frac{L}{3}$ (we may also refer to it as an essential
$\varepsilon$-circle). According to Lemma 33, \cite{PW1} this means that
essential circles are characterized as curves of positive length that are
shortest in their $\varepsilon$-homotopy class for some $\varepsilon$.
Essential circles are special closed geodesics that are miminal on half their
length (\textquotedblleft$2$-geodesics\textquotedblright\ in the parlance of
Sormani-Wei, $\frac{L}{2}$-geodesics in our terminology) whose lengths are
three times the lengths of the values in HCS, or equivalently $\frac{2}{3}$
the values of CS (\cite{PW1}, Theorem 6). If $C$ is an essential $\varepsilon
$-circle then any triple of points $T=\{x_{0},x_{1},x_{2}\}$ on $C$ such that
$d(x_{i},x_{j})=\varepsilon$ when $i\neq j$ is an \textit{essential
}$\varepsilon$-\textit{triad }(or just \textit{essential triad} when
$\varepsilon$ is unspecified), meaning that no $\varepsilon$-subdivision of
$T$ is $\varepsilon$-null. Essential triads are characterized by the fact that
if the points on them are joined by geodesics the resulting curve is an
essential circle. To summarize, essential triads are precisely the discrete
analogs of essential circles: adding \textquotedblleft edges\textquotedblright%
\ to an essential triad creates an essential circle, and any triad of equally
spaced points on an essential circle is an essential triad.

Two essential triads $\tau_{1},\tau_{2}$ are said to be \textit{equivalent} if
they are both essential $\varepsilon$-triads for some $\varepsilon>0$, and
some, hence any $\varepsilon$-subdivision of $\tau_{1}$ is $\varepsilon
$-homotopic to an $\varepsilon$-subdivision of $\tau_{2}$ or $\overline
{\tau_{2}}$. Two essential circles are said to be \textit{equivalent} if the
corresponding essential triads are equivalent (see \cite{PW1} for more
details). As we will now show, the smallest essential circles are generally
easiest to find.

\begin{proposition}
\label{systole}Let $X$ be a compact geodesic space that is semilocally simply
connected. If $c$ is a path loop that is not null-homotopic and whose length
is equal to the $1$-systole $\sigma_{1}$ of $X$ (i.e. the length of the
shortest curve that is not null-homotopic) then $c$ a shortest essential
circle. Moreover, any two shortest essential circles are equivalent if and
only if one is freely homotopic to the other or its reversal.
\end{proposition}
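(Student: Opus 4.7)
My plan is to tether both assertions to two inputs already in hand: the characterization (Lemma 33 of \cite{PW1}) that essential circles are precisely the curves of positive length that are shortest in their free $\varepsilon$-homotopy class for some $\varepsilon>0$, and Remark \ref{coverstuff}, which states that for our hypotheses the homomorphism $\Lambda_{E_{\varepsilon}}:\pi_{1}(X)\to\pi_{\varepsilon}(X)$ is an isomorphism whenever $\varepsilon$ is a lower bound for the homotopy critical spectrum $\mathrm{HCS}$. The bridge is that the shortest essential circle has length $3\min(\mathrm{HCS})$ by Theorem 6 of \cite{PW1}, so the critical value $\varepsilon_{0}:=\sigma_{1}/3$ will turn out to be exactly $\min(\mathrm{HCS})$, i.e.\ the largest value at which $\Lambda_{E_{\varepsilon}}$ remains an isomorphism.

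For the first assertion, I first note that $c$ is shortest in its free path-homotopy class, since any strictly shorter loop would be null-homotopic by definition of $\sigma_{1}$. Picking any $\varepsilon$ strictly below $\min(\mathrm{HCS})$, Remark \ref{coverstuff} makes $\Lambda_{E_{\varepsilon}}$ an isomorphism, so free $\varepsilon$-homotopy and free path homotopy of loops coincide, and hence $c$ is shortest in its free $\varepsilon$-homotopy class. The characterization then gives that $c$ is an essential circle. Conversely, any essential circle $c'$ of length $L$ is not $(L/3)$-null and hence not $\delta$-null for any $\delta\le L/3$; choosing $\delta$ small enough that $\Lambda_{E_{\delta}}$ is injective, Lemma \ref{ehomo} implies $c'$ is not null-homotopic, so $L\ge\sigma_{1}$. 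Combining, the shortest essential circles are exactly the non-null-homotopic loops of length $\sigma_{1}$, which puts $c$ among them and forces $\sigma_{1}=3\min(\mathrm{HCS})$.

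For the second assertion, set $\varepsilon_{0}:=\sigma_{1}/3=\min(\mathrm{HCS})$, so that $\Lambda_{E_{\varepsilon_{0}}}$ is still an isomorphism by Remark \ref{coverstuff}. The definition of equivalence of essential triads amounts to saying that $\varepsilon_{0}$-refinements of the triads $\tau_{1},\tau_{2}$ (each obtained by choosing three equally spaced points on $c_{i}$) are freely $\varepsilon_{0}$-homotopic, possibly after replacing $\tau_{2}$ by $\overline{\tau_{2}}$; by Lemma \ref{reflem} any such refinement is $\varepsilon_{0}$-homotopic to an $\varepsilon_{0}$-subdivision of $c_{i}$ obtained by adding points along the curve, so equivalence of triads is the same as free $\varepsilon_{0}$-homotopy of $c_{1}$ with $c_{2}$ or $\overline{c_{2}}$ by Lemma \ref{ehomo}. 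The isomorphism $\Lambda_{E_{\varepsilon_{0}}}$ then lets me pass between free $\varepsilon_{0}$-homotopy and free path homotopy in both directions, with Corollary \ref{homimp} supplying the path-homotopy-implies-$\varepsilon_{0}$-homotopy step for the backward direction.

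The main obstacle I anticipate is the boundary nature of $\varepsilon_{0}=\min(\mathrm{HCS})$: this exact value lies in $\mathrm{HCS}$, so one is tempted to think $\Lambda_{E_{\varepsilon_{0}}}$ should already have nontrivial kernel. The rescue is that Remark \ref{coverstuff} only demands $\varepsilon$ be a lower bound for $\mathrm{HCS}$, which its minimum still satisfies; the drop to a proper quotient of $\pi_{1}(X)$ happens only for $\delta>\varepsilon_{0}$. A secondary nuisance is that three equally spaced points on a closed geodesic of length $3\varepsilon_{0}$ lie at mutual distance exactly $\varepsilon_{0}$ rather than strictly less, so the triad itself is not literally an $\varepsilon_{0}$-chain; I will handle this by passing immediately to $\varepsilon_{0}$-refinements along the curve, where strict inequality is restored by intermediate points, and then invoking Lemma \ref{reflem} to see that all such refinements are $\varepsilon_{0}$-homotopic to each other.
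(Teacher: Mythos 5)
Your overall strategy is sound, and your treatment of the second assertion is essentially the paper's own argument; but there is a genuine gap in the forward direction of the first assertion. The characterization you quote from Lemma 33 of \cite{PW1} cannot be invoked at an arbitrary $\varepsilon$: correctly read, a loop of positive length $L$ is an essential circle precisely when it is shortest in its free $\frac{L}{3}$-homotopy class, with the scale $\varepsilon=\frac{L}{3}$ tied to the length. At a strictly smaller $\varepsilon$ the statement is false. For example, on the flat torus $S^{1}(1)\times S^{1}(100)$ the $50$-fold iterate of the short generator has length $50$ and is shortest in its free homotopy class, hence in its free $\varepsilon$-homotopy class for every $\varepsilon<\frac{1}{3}$; yet it is $\frac{50}{3}$-null and so is not an essential circle. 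Consequently your step ``pick $\varepsilon$ strictly below the minimum of HCS, conclude $c$ is shortest in its free $\varepsilon$-homotopy class, invoke the characterization'' does not yield the needed conclusion that $c$ is not $\frac{\sigma_{1}}{3}$-null: the free $\frac{\sigma_{1}}{3}$-homotopy class of $c$ is a priori strictly larger than its free homotopy class and could in principle contain shorter (even constant) curves.

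The repair is the one you sketch in your closing paragraph, but it must be run in the correct order to avoid circularity. First prove your ``converse'' inequality: essential circles are not null-homotopic, hence have length at least $\sigma_{1}$. Combined with Theorem 6 of \cite{PW1} this gives that $\varepsilon_{0}:=\frac{\sigma_{1}}{3}$ is a lower bound for HCS, so Remark \ref{coverstuff} applies at $\varepsilon_{0}$ itself; then $c$ not null-homotopic forces $c$ not $\varepsilon_{0}$-null, which together with $L(c)=3\varepsilon_{0}$ is exactly the definition of an essential circle---no appeal to the ``shortest in its class'' characterization is needed at all. (The paper reaches the same non-nullity differently: it assumes an $\varepsilon_{0}$-subdivision of $c$ is $\varepsilon_{0}$-null, decomposes it into $\varepsilon_{0}$-small loops via Proposition 30 of \cite{PW1}, and observes that stringings of such loops have length less than $3\varepsilon_{0}=\sigma_{1}$ and are therefore null-homotopic, contradicting $[c]\neq1$.) Your handling of the second assertion---working at $\varepsilon_{0}$, translating equivalence of triads into free $\varepsilon_{0}$-homotopy of the circles via refinements, and using the identification of $\pi_{1}(X)$ with $\pi_{\varepsilon_{0}}(X)$ together with Corollary \ref{homimp} for the reverse implication---matches the paper's proof, including your correct observation that the triad points lie at mutual distance exactly $\varepsilon_{0}$ and one must pass to subdivisions along the curve.
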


\begin{proof}
The fact $\sigma_{1}$ is positive and is the shortest possible length of any
essential circle is part of Corollary 43 in \cite{PW1}. It follows from
Theorem 6 in \cite{PW1} that there is at least one essential circle of length
$\sigma_{1}$. Now suppose that $c$ is not null-homotopic of length $\sigma
_{1}$. Taking $\varepsilon:=\frac{\sigma_{1}}{3}$, to show $c$ is an essential
circle we need only show that it is not $\varepsilon$-null. Suppose it were;
that is, some $\varepsilon$-subdivision $\lambda$ of $c$ must be $\varepsilon
$-null. Taking $\delta=\varepsilon$ in Proposition 30 of \cite{PW1} (which is
analgous to Proposition \ref{deltaimp} in the present paper), for some choice
of $\alpha$, $\overline{\alpha}\ast\lambda\ast\alpha$ is $\varepsilon
$-homotopic to a product of $\varepsilon$-small $\varepsilon$-loops, one of
which must not be $\varepsilon$-null. But recall that an $\varepsilon$-small
loop is of the form $\overline{\beta}\ast\tau\ast\beta$, where $\tau
=\{x_{0},x_{1},x_{2},x_{0}\}$ satisfies $d(x_{i},x_{j})<\varepsilon$ for all
$i,j$. But such a $\tau$ is necessarily $\varepsilon$-null, a contradiction.

Next, suppose that $c_{1},c_{2}$ are equivalent shortest essential circles.
Reversing $c_{2}$, if necessary, this means that for $\varepsilon=\frac
{\sigma_{1}}{3}$, some essential triad $\tau_{i}$ on $c_{i}$ has an
$\varepsilon$-subdivision $\tau_{i}^{\prime}$ that is $\varepsilon$-homotopic
to $\tau_{j}^{\prime}$ for $j\neq i$. That is, $c_{1}$ is $\varepsilon
$-homotopic to $c_{2}$. By Theorem 26 of \cite{PW1}, $X_{\varepsilon}$ is the
universal covering space of $X$, meaning that two loops lift to a loop in
$X_{\varepsilon}$ if and only if they are homotopic. But the same statement is
true for $\varepsilon$-homotopic curves by Proposition \ref{ender}, completing
the proof.
\end{proof}

In \cite{PW2}, Theorem 27, when $0<\delta<\varepsilon$, we showed that
$\phi_{\varepsilon\delta}:X_{\delta}\rightarrow X_{\varepsilon}$ is
characterized as the quotient map of $X_{\delta}$ via the (normal) subgroup
$K_{\varepsilon}(\mathcal{T})$ of $\pi_{\varepsilon}(X)$ generated by all
$\varepsilon$-loops of the form $\overline{\alpha}\ast\tau_{i}\ast\alpha$,
where is in a set $\mathcal{T=\{}\tau_{1},...,\tau_{k}\}$ containing exactly
one essential triad $\tau_{i}$ representing each equivalence class of
essential $\mu$-triads with $\varepsilon\leq\mu<\delta$. Via the argument in
the proof of Theorem \ref{firstest}, $K_{\varepsilon}(\mathcal{T})$ is in fact
the normal closure of the finite set of all $\alpha_{i}\ast\tau_{i}%
\ast\overline{\alpha_{i}}$ for any specific choice of $\varepsilon$-chain
$\alpha_{i}$.

\begin{proof}
[Proof of Theorem \ref{TD3}]In \cite{GSS3}, DeSmit, Gornet, and Sutton
introduced (Definition 2.3) the notion of a \textit{length map} on a group $H$
with identity $1$, namely a function $m:H\rightarrow R^{+}$ such that (a) $m$
is positive except $m(1)=0$, (b) $m(hgh^{-1})=m(g)$ for all $g,h\in H$, and
(c) $m(g^{k})\leq\left\vert k\right\vert m(g)$ for all $g\in H$ and
$k\in\mathbb{Z}$. A particular example of a length map is what Sormani-Wei
(\cite{SW1}) called the Minimum Marked Length Map $m_{g}$ in on a manifold
with Riemannian metric $g$: $m_{g}$ assigns to each element of the fundamental
group the length of the shortest curve its free homotopy class. So the values
of $m_{g} $ are precisely MLS. We will need Theorem 2.9 of \cite{GSS3}, which
we will describe in a weaker simplified form using Example 5.5 \cite{GSS3},
and we will shift the subscripts to improve the exposition for our purposes.
Let $M$ be a Riemannian manifold of dimension at least $3$. Let $\mathcal{F}%
(M)$ denote the collection of un-oriented free homotopy classes of loops in
$M$. Suppose that $\{c_{0},...,c_{k}\}$ is a set of distinct elements of
$\mathcal{F}(M)$ with $c_{0}$ trivial, and $l_{0}=0<l_{1}\leq\cdot\cdot
\cdot\leq l_{k}$ is a sequence of real numbers such that $2l_{1}\geq l_{k}$.
Then by Theorem 2.9 and Example 5.5 of \cite{GSS3} there is a Riemannian
metric $g$ on $M$ such that $m_{g}(c_{i})=l_{i}$ for all $i$ and $m_{g}(c)\geq
l_{k}$ otherwise. In other words, one may prescribe the length of the shortest
geodesic in the free homotopy class of all $c_{i}$, and force all other values
of MLS to be at least $l_{k}$.

Suppose that $G$ is the normal closure of a finite set $\{g_{1},...,g_{k-1}\}
$, all distinct and none of which is trivial. If $G$ is equal to $\pi_{1}(M) $
then the corresponding covering map is trivial, hence an $\varepsilon$-cover
for any Riemannian metric. Assume there is some nontrivial $g_{k}\notin G$.
Now let $c_{0}$ be the trivial free homotopy class and for each $1\leq i\leq
k$ let $c_{i}$ be the free homotopy class of some, hence any loop in $g_{i}$.
Define $l_{0}:=0$, $l_{k}=1.5$ and $l_{i}=1$ for $i=1,...,k-1$. By Example 5.5
in \cite{GSS3} (noting that our indexing begins with $0$ rather than $1$) and
Theorem 2.9 of \cite{GSS3} there is Riemannian metric $g$ on $M$ such that
$m_{g}(c_{i})=1$ for all $i\neq0,k$ and $m_{g}(c)\geq1.5$ for every free
homotopy class $c\neq c_{i}$ for any $i<k$. Let $\kappa_{i}$ be shortest
curves representing each $c_{i}$ with $1\leq i\leq k-1$. By Proposition
\ref{systole}, each $\kappa_{i}$ is a shortest essential circle, and two
$\kappa_{i}$ are non-equivalent as essential circles if and only if they lie
in different $c_{i}$. Moreover, any other essential circle, being shortest in
its homotopy class, must have length at least $1.5$.

As discussed in Remark \ref{coverstuff}, we may suppose that $\varepsilon>0$
is small enough that we may identify, via the function $\Lambda
_{E_{\varepsilon}}$, $M_{\varepsilon}$ with the universal cover $\widetilde{M}%
$ of $M$ and identify $G$ with a normal subgroup of $\pi_{\varepsilon}%
(M)=\pi_{1}(M)$. Under this correspondence, $G$ is the normal closure of
$\{[\lambda_{i}]_{\varepsilon}\}$, where $\lambda_{i}$ is any $E_{\varepsilon
}$-subdivision of a loop of the form $\overline{f_{i}}\ast c_{i}\ast f_{i}$,
where $f_{i}$ is any curve from the basepoint to the start point of the
essential circle $\kappa_{i}$. That is, $G$ is precisely equal to $K_{\sigma
}(\mathcal{T)}$ (see discussion prior to this proof), where $\mathcal{T}%
:=\{\tau_{i}\}_{i=1}^{k-1}$ is a collection of essential triads representing
all equivalence classes of smallest essential circles, with exactly one
representative $\tau_{i}$ for each free homotopy class. On the other hand,
letting $\delta:=\frac{1.5}{3}=\frac{1}{2}$, Theorem 27 of \cite{PW2} states
that since $\mathcal{T}$ contains a representative for each essential $\sigma
$-triad with $\frac{1}{2}<\sigma<\varepsilon$, $K_{\varepsilon}(\mathcal{T}%
)=\ker\theta_{\delta\varepsilon}$. That the covering space $\widetilde{M}/G$
is equivalent to $M_{\delta}$ now follows from Proposition \ref{cover}.
\end{proof}

\begin{proof}
[Proof of Proposition \ref{semi}]The statements about inessential $E$ are
obvious. The proof of the rest is very similar to the proof of the statement
for Riemannian manifolds involving free homotopy classes, replacing the fact
that small loops are null-homotopic by the fact that small loops are $E$-null.
Suppose that $E$ is essential, $c$ is not freely $E$-null and let
$c_{i}:[0,1]\rightarrow X$ be $E$-loops that are freely $E$-homotopic to $c$,
parameterized proportional to arclength, with lengths converging to
\[
L:=\inf\{L(f):f\text{ is a curve loop that is }E\text{-homotopic to
}c\}\text{.}%
\]
Since the lengths of the $c_{i}$ are bounded, a standard application of the
Ascoli-Arzela Theorem shows we may assume, taking a subsequence if needed,
that $c_{i}$ converges uniformly to some $\overline{c}:[0,1]\rightarrow X$
with $L(c)\leq L$. By Proposition \ref{uniform}, for all large $i$,
$\overline{c}$ is freely $E$-homotopic to $c_{i}$ hence to $c$. So
$\overline{c}$ is the desired shortest curve. To see that any such
$\overline{c}$ is a closed $\frac{3\varepsilon}{2}$-geodesic, suppose that a
segment $S$ of length $\leq\frac{3\varepsilon}{2}$ is not minimal. By
definition we may join its endpoints by a new curve $S^{\prime}$ of length
less than $\frac{3\varepsilon}{2}$. Then $S$ and $S^{\prime}$ together form a
loop of length less than $3\varepsilon$, which is $\varepsilon$-null (Lemma
33, \cite{PW1}), hence $E$-null. Therefore the curve obtained by replacing $S$
by $S^{\prime}$ has length shorter than $L$ and is $E$-homotopic to $c$, a contradiction.
\end{proof}

\begin{proof}
[Proof of Theorem \ref{final}]In any geodesic space, if $c$ is shortest in its
$E$-homotopy class, it is shortest in its homotopy class by Corollary
\ref{homimp}, showing that the set described in Part 1a is contained in MLS.
On the other hand, suppose that $c$ is shortest in its homotopy class. As in
the proof of Proposition \ref{systole}, let $\varepsilon>0$ be small enough
that $X_{\varepsilon}$ is the universal covering space of $X$, so that curves
are freely homotopic if and only if they are freely $\varepsilon$-homotopic.
That is, $c$ is shortest in its $\varepsilon$-homotopy class, i.e. its
$E$-homotopy class for $E=E_{\varepsilon}$. This proves Part 1a. The remaining
parts are true for any compact geodesic space, as will be shown next.
\end{proof}

\begin{theorem}
\label{final'}Theorem \ref{final} holds when $M$ is simply assumed to be a
compact geodesic space, using Theorem \ref{final}.1a as the definition of MLS.
\end{theorem}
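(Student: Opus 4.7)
The plan is to establish the three statements of Theorem \ref{final} beyond Part 1a that are new in this setting: the equivalence 1a $\Leftrightarrow$ 1b and the characterizations 2a, 2b of CS, using 1a as the definition of MLS. The equivalence 1a $\Leftrightarrow$ 1b follows directly from Proposition \ref{finally2}: given a shortest closed curve $c$ of length $L$ in its free $E$-homotopy class, Proposition \ref{finally2}.2 produces an $E$-loop $\lambda$ with $L(\lambda)=L$ in the same free $E$-homotopy class; a hypothetically shorter $E$-loop $\mu$ in that class would, via Proposition \ref{finally2}.1, yield a curve of length $L(\mu)<L$ freely $E$-homotopic to $c$, contradicting minimality. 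The converse is symmetric, and the same device converts 2a into 2b, so it suffices to prove 2a.

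For the $\supseteq$ direction of 2a, let $\sigma\in\text{CS}=\tfrac{3}{2}\text{HCS}$ (by \cite{PW2}) and $\varepsilon=\tfrac{2\sigma}{3}\in\text{HCS}$. By \cite{PW1}, Theorem 6, there is an essential $\varepsilon$-circle $\gamma$ of length $3\varepsilon=2\sigma$; taking $E=E_\varepsilon$, the essential $\varepsilon$-triad on $\gamma$ witnesses that $E$ is essential and that $\gamma$ is not freely $E$-null. Any closed curve of length strictly less than $3\varepsilon$ admits a 3-point $\varepsilon$-subdivision that is a null $\varepsilon$-triad, hence is freely $E$-null; so $\gamma$ realizes the minimum, and $\sigma=L(\gamma)/2$ lies in the set of 2a.

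The $\subseteq$ direction of 2a is the main obstacle. Let $E$ be an essential chained entourage and $\bar{c}$ a shortest non-freely-$E$-null closed curve of length $L$ (existence by Proposition \ref{semi}); I claim $L/3\in\text{HCS}$. By Corollary \ref{homimp} applied to free homotopies, a strictly shorter path-freely-homotopic curve would also be freely $E$-homotopic to $\bar{c}$, hence non-freely-$E$-null, contradicting minimality; thus $\bar{c}$ is shortest in its free path-homotopy class. A cut-and-paste argument then shows $\bar{c}$ is a closed $2$-geodesic: if a subarc $S$ of length at most $L/2$ failed to be minimal, replacing $S$ with a strictly shorter minimal subarc $S'$ would produce a curve differing from $\bar{c}$ by a loop $S\ast\overline{S'}$ of length strictly less than $L$; such a loop is freely $E$-null by minimality of $L$, so the modified curve is freely $E$-homotopic to $\bar{c}$ yet shorter, a contradiction. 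Being a 2-geodesic, $\bar{c}$ admits, for every $\varepsilon'>L/3$, a 3-point $\varepsilon'$-subdivision forming a null triad, so $\bar{c}$ is freely $\varepsilon'$-null for every such $\varepsilon'$; conversely, since $E_\varepsilon\subset E$ for $\varepsilon\leq\sigma(E)$, $\bar{c}$ remains not freely $\varepsilon$-null for those $\varepsilon$. Hence the threshold $\varepsilon^{\star}:=\sup\{\varepsilon:\bar{c}\text{ is not freely }\varepsilon\text{-null}\}$ lies in $[\sigma(E),L/3]$, and by the definition of HCS, $\varepsilon^{\star}\in\text{HCS}$. The remaining technical crux is the identification $\varepsilon^{\star}=L/3$: following the strategy of Proposition \ref{systole} and using Corollary \ref{gens} to control the discrepancy between free path-homotopy and free $\varepsilon^{\star}$-homotopy (the difference being generated by $\varepsilon^{\star}$-small loops of length less than $3\varepsilon^{\star}$, which are freely $E$-null by minimality of $L$), together with the characterization in \cite{PW1} of essential $\varepsilon^{\star}$-circles as shortest non-trivial representatives at their critical level, one identifies $\bar{c}$ with an essential $\varepsilon^{\star}$-circle of length $3\varepsilon^{\star}$, forcing $L=3\varepsilon^{\star}$ and hence $L/3\in\text{HCS}$, as required.
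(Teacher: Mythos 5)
Your handling of Part 1 (the equivalence of 1a and 1b via Proposition \ref{finally2}), of the passage from 2a to 2b, and of the containment CS $\subset$ (the set in 2a) via essential circles all match the paper. The divergence, and the gap, is in the reverse containment of 2a, which you rightly call the main obstacle but do not close. The paper disposes of it in one line by quoting the characterization of CS from Section 3 of \cite{GSS3} --- CS consists of half the lengths of shortest loops lifting as non-loops to \emph{arbitrary} covering spaces of $X$ --- together with Lemma \ref{ehomo}, which identifies the loops lifting as loops to $X_E$ with the freely $E$-null ones. You instead try to prove directly that $L/3\in\text{HCS}$, and two steps of that route are unjustified. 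First, the claim that $\varepsilon^{\star}=\sup\{\varepsilon:\bar c\text{ is not freely }\varepsilon\text{-null}\}$ lies in HCS ``by the definition of HCS'' is not by definition: the definition requires a loop that is not $\varepsilon^{\star}$-null yet is $\delta$-null for all $\delta>\varepsilon^{\star}$, so you must show the supremum is attained, which needs the injectivity/discreteness results of \cite{PW1} rather than the definition alone. Second, and more seriously, the identification $\varepsilon^{\star}=L/3$ --- without which you only obtain $\tfrac{3}{2}\varepsilon^{\star}\in\text{CS}$ for some $\varepsilon^{\star}\leq L/3$, not $L/2\in\text{CS}$ --- is only gestured at, and the gesture is circular as written: ``identifying $\bar c$ with an essential $\varepsilon^{\star}$-circle of length $3\varepsilon^{\star}$'' presupposes $L=3\varepsilon^{\star}$, which is exactly what must be proved. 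The $2$-geodesic cut-and-paste argument, while correct, does not advance this point.

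Your parenthetical appeal to Corollary \ref{gens} does contain the germ of a correct direct argument, and it is worth noting how to finish along those lines, since it bypasses both problems at once. Suppose $\bar c$ were freely $\delta$-null for some $\delta\leq L/3$. By Theorem \ref{ker}, $\ker\Lambda_{\delta}$ is generated by homotopy classes of stringings of $\delta$-small loops; taking geodesic stringings, each generator is conjugate in $\pi_{1}(X)$ to the class of a loop of length less than $3\delta\leq L$, and every closed curve of length less than $L$ is freely $E$-null by minimality of $L$, so $\ker\Lambda_{\delta}\subset\ker\Lambda_{E}$ and $\bar c$ would be freely $E$-null, a contradiction. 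Hence $\bar c$ is not $(L/3)$-null, while Lemma 33 of \cite{PW1} makes it $\delta$-null for every $\delta>L/3$; this is precisely the statement that $L/3\in\text{HCS}$, with no need for the threshold $\varepsilon^{\star}$, the attainment of a supremum, or the identification of $\bar c$ with an essential circle. As submitted, however, the crucial step is asserted rather than proved, so the argument is incomplete.
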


\begin{proof}
If $c$ is non-constant and shortest in its free $E$-homotopy class then
according to Proposition \ref{finally2}.1 there is some $E$-loop $\lambda$
having the same length as $c$ that is $E$-homotopic to any $E$-subdivision of
$c$. But Proposition \ref{finally2}.2 now shows that $\lambda$ must also be
shortest in its $E$-homotopy class. An analogous argument shows that if
$\lambda^{\prime}$ is shortest in its $E$-homotopy class then there must be a
curve of the same length as $\lambda^{\prime}$ that is shortest in its $E
$-homotopy class. That is, the quantities described by Parts 1a and 1b in
Theorem \ref{final'} are the same.

For the second part, note that in \cite{GSS3}, Section 3, de Smit, Gornet, and
Sutton gave the following equivalent definition of CS in any compact geodesic
space $X$: CS consists of half the lengths of loops that lift as a non-loop to
\textit{any} covering space of $X$. For the covering spaces $X_{E}$, Lemma
\ref{ehomo} implies that this is precisely the length of a shortest loop that
is not $E$-null, so any value in Part 2a is contained in CS. On the other
hand, CS $=\frac{3}{2}$ HCS consists of $\frac{3}{2}$ the lengths of all
essential circles. Since every essential $\varepsilon$-circle has length
$3\varepsilon$ and curves of length less than $3\varepsilon$ are $\varepsilon
$-null (Lemma 33, \cite{PW1}), then essential circles are the shortest
possible loops that are not $E$-null for $E=E_{\varepsilon}$. This completes
the proof that CS is consists of the values in Part 2a.

Note that if $c$ is a shortest loop that is not $E$-null then $c$ must be
shortest in its $E$-homotopy class. Proposition \ref{finally2}.1 tells us that
there is an $E$-loop of the same length as $c$ that is $E$-homotopic to any
$E$-subdivision of $c$. By Proposition \ref{finally2}.2, there cannot be a
shorter such loop. This justifies the term \textquotedblleft
shortest\textquotedblright\ in 2b. Moreover, as in the proof of Part 1,
Proposition \ref{finally2} tells us that the values in 2a and 2b are the same.
\end{proof}

\begin{proof}
[Proof of Theorem \ref{final2}]For the first part, note that if $c$ is
$E$-critical then any $E$-subdivision $\lambda$ of $c$, being also an
$\overline{E}$-subdivision of $c$, is also $E$-critical. Likewise, any
stringing $\widehat{\lambda}$ of an $E$-critical $E$-loop $\lambda$ is also a
stringing of $\lambda$ when considered as an $\overline{E}$-chain. It is
immediate that $\widehat{\lambda}$ is an $E$-critical loop.

The proof of the second part is similar to the proof of Proposition \ref{semi}
and we will only state the essential steps for the argument involving $c$. Let
$c_{i}:[0,1]\rightarrow X$ be $E$-loops that are $E$-critical, parameterized
proportional to arclength, with lengths converging to $\psi(E) $. We may
assume that $c_{i}$ converges uniformly to some $c:[0,1]\rightarrow X$ with
$L(c)\leq\psi(E)$. For all large $i$, $c$ is freely $E$-homotopic and
$\overline{E}$-homotopic to $c_{i}$. That is, $c$ is also $E$-critical and has
shortest length. Proposition \ref{final2} now finishes the proof.

For the third part, recall that HCS consists of $\frac{1}{3}$ the lengths of
essential triads. But any essential $\varepsilon$-triad $T$, having length
$3\varepsilon$, is $\overline{E_{\varepsilon}}$-null, but by defininition has
no $\varepsilon$-null $\varepsilon$-subdivision. That is, any $\varepsilon
$-subdivision of $T$ (which also has length $3\varepsilon$) is $E_{\varepsilon
}$-critical. By definition, CS $\subset$ ES. On the other hand, if $c$ is a
shortest $E$-critical curve then $c$ must be shortest in its free $E$-homotopy
class. If not, it would be $E$-homotopic, hence $\overline{E}$-homotopic to a
shorter curve--but that curve would still be $E$-critical, a contradiction.
The final two statements are shown in Example \ref{distinct}.
\end{proof}

\section{Examples}

\begin{example}
\label{circle}Let $X$ be a circle with the unique geodesic metric of
circumference $1$ and let $E$ be a chained entourage. According to Theorem
\ref{ker}, $\ker\Lambda_{E}$ is generated by homotopy classes of stringings of
$E$-small loops. We will show that every $E$-triad has a stringing that
represents either the trivial homotopy class or the class of a generator of
$\pi_{1}(X)=\mathbb{Z}$. From this it follows that the only two possible
$E$-covers of the circle are the trivial cover and the universal cover.

If an $E$-triad $\tau=\{x_{0},x_{1},x_{2}\}$ has no $E$-null stringing then in
particular all three points must be distinct. We assume that the points are
ordered in the clockwise direction and let $A_{i}$ denote the arc in the
clockwise direction from $x_{i}$ to $x_{i+1}$. Since $E$ is chained, each
$B(x_{i},E)\cap B(x_{i+1})$ must contain $A_{i}$ or $A_{j}\cup A_{k}$ with
$j,k\neq i$. If $B(x_{i},E)\cap B(x_{i+1})$ only contains $A_{i}$ for all $i$,
then the only possible stringings of $\tau$ are $E$-homotopic to the circle
itself, meaning that the $\phi_{E}$ is trivial.

Otherwise, without loss of generality we may suppose that $B(x_{0},E)\cap
B(x_{1},E)$ contains $A_{1}\cup A_{2}$. Case 1: Suppose that $A_{2}\subset
B(x_{2},E)\cap B(x_{0},E)$. There are two subcases. 1a: $A_{1}\subset
B(x_{1},E)\cap B(x_{2},E)$. In this case (slightly abusing notation by
considering each $A_{i}$ as a path), $\overline{A_{2}}\ast\overline{A_{1}}\ast
A_{1}\ast A_{2}$ is a stringing of $\tau$ that is clearly $E$-null. 1b:
$A_{0}\cup A_{2}\subset B(x_{1},E)\cap B(x_{2},E)$. In this case,
$\overline{A_{2}}\ast\overline{A_{1}}\ast\overline{A_{0}}\ast\overline{A_{2}%
}\ast A_{2}$ is a stringing of $\tau$ that represents a generator of $\pi
_{1}(X)$. Now observe that we have considered the three essential cases (up to
re-ordering): Each $B(x_{i},E)\cap B(x_{i+1})$ contains only $A_{i}$, exactly
two of the $B(x_{i},E)\cap B(x_{i+1})$ contain only $A_{i}$, or exactly one of
the $B(x_{i},E)\cap B(x_{i+1})$ contains only $A_{i}$, so the proof is complete.
\end{example}

The next examples are related to the question of identifying entourage covers
for $2$-dimensional manifolds (with or without boundary).

\begin{example}
\label{moebius}Let $M$ be the Moebius Band. First note that $M$ is
homeomorphic to $\mathbb{RP}^{2}$ with a small disk removed. Therefore we may
Gromov-Hausdorff approximate $\mathbb{RP}^{2}$ by Mobius bands, taking the
standard metric on $\mathbb{RP}^{2}$ and induced geodesic metric on $M$ with
smaller and smaller disks removed. Since the double cover of $\mathbb{RP}^{2}$
is its universal cover, hence an $\varepsilon$-cover for all sufficiently
small $\varepsilon$, the double cover of $M$ is also an $\varepsilon$-cover
for small enough $\varepsilon$ with respect to the induced metrics with small
enough disks removed. This follows from the convergence result Proposition 37,
\cite{PW2}--since small enough $\varepsilon>0$ is not a homotopy critical
value of $\mathbb{RP}^{2}$. Note that $M$ deformation retracts onto the circle
and yet the double cover of the circle is not an entourage cover by Example
\ref{circle}. This example shows one way that convergence can be used to
identify entourage covers but also shows the limitations of \textquotedblleft
enlarging\textquotedblright\ spaces to try to find entourage covers.
\end{example}

\begin{remark}
Let $X$ be a compact geodesic space such that HCS has $n$ elements, counting
multiplicity as defined in \cite{PW1}. In other words, there are a total of
$n$ equivalence classes of essential circles. As pointed out in the proof of
Theorem \ref{final2}, every essential $\varepsilon$-circle is $E_{\varepsilon
}$-critical, and as defined in \cite{PW1}, the multiplicity is the number of
distinct $E_{\varepsilon}$-homotopy classes of $\varepsilon$-circles. Since
being $E$-critical is a topological property these essential circles will
still be critical loops in any other metric on $X$, and by definition their
lengths will be elements of ES. That is, ES in any metric will always have at
least $n$ elements, but the lengths of curves, and hence the values and
multiplicities of ES will be different. In particular, as soon as there are
two geodesic metrics with CS having different sizes, then the space with the
smaller CS must have CS strictly contained in ES, counting multiplicity. In
the next example we essentially carry this out in a carefully controlled
setting, allowing us to insure that the multiplicities are 1 and hence control
the absolute size of the spectra.
\end{remark}

\begin{example}
\label{distinct}Let $M$ be a compact smooth manifold of dimension $3$ or
higher with fundamental group $\mathbb{Z}_{4}$, which we will denote by
$\{[c_{0}],[c_{1}],[c_{2}],[c_{3}]\}$. (There are many other possibilities for
$\pi_{1}(M)$, but we are using $\mathbb{Z}_{4}$ for simplicity.) Denote the
double cover of $M$ (corresponding to the subgroup generated by $[c_{2}]$) by
$M^{\prime}$ and the universal cover of $M$ by $M^{\prime\prime}$. By standard
covering space theory, $c_{2}$ is the only loop, up to free homotopy, that
lifts as a loop to $M^{\prime}$. We now make the following assignments:
$[c_{1}]\rightarrow1.2$, $[c_{2}]\rightarrow1.1$, $[c_{3}]\rightarrow1.2$ and
apply Theorem 2.9 of \cite{GSS3}. In the resulting metric, the shortest loop
lifting to a non-loop in $M^{\prime}$ must be freely homotopic to either
$c_{1}$ or $c_{3}$ and hence has length $1.2$. According to Proposition
\ref{systole}, $M^{\prime\prime}$ is equivalent to $M_{\frac{1.1}{3}}$. In
particular, HCS $=\{\frac{1.1}{3},\frac{1.2}{3}\}$ and CS $=\{\frac{1.1}%
{2},\frac{1.2}{2}\}$. Moreover, as pointed out in the proof of Theorem
\ref{final2}, if $c$ is an essential $\frac{1.2}{3}$-circle then $c$ is
$E$-critical, where $E:=E_{\frac{1.2}{3}}$. Note that since $c$ lifts as a
non-loop to $M_{E}$, $c$ is freely homotopic to $c_{2}$.

Now use the following assignments: $[c_{1}]\rightarrow1.1$, $[c_{2}%
]\rightarrow1.2$, $[c_{3}]\rightarrow1.3$ and apply Theorem 2.9 of
\cite{GSS3}. With this metric, any shortest loop lifting as a non-loop to
either $M^{\prime}$ and $M^{\prime\prime}$ has length $1.1$.That is, HCS
$=\{\frac{1.1}{3}\}$ and CS $=\{\frac{1.1}{2}\}$. Since being $E$-critical is
a topological property, the loop $c$ is still $E$-critical and by definition
the length of the shortest curve $c^{\prime}$ in its free $E$-homotopy class
is an element of ES. But any curve in the free $E$-homotopy class of $c $ must
lift to a loop in $M^{\prime}=M_{E}$. Since $c_{2}$ is the only such curve, up
to free homotopy, $c^{\prime}$ must be freely homotopic to $c_{2}$ and
therefore has length $1.2$. That is, ES contains $\{1.1,1.2\}$ and in
particular ES strictly contains $3$HCS $=2$CS. Moreover, since we may change
the value assigned to $[c_{2}]$ by any small amount, it is possible for
different Riemannian metrics on $M$ to have the same CS but different ES.
Finally, since there are only two non-trivial entourage covers of $M$, hence
at most two elements in ES, ES $=\{1.1,1.2\}$. But $1.3$ is an element of MLS
by definition, so ES is strictly contained in MLS. Again, the value assigned
to $[c_{3}]$ may be changed by a small amount without impacting ES, and
therefore one obtains Riemannian metrics on $M$ that have the same ES but
different MLS. This example verifies Theorem \ref{final2}.2.d-e.
\end{example}

\begin{example}
\label{nomin}In \cite{BPS}, Berestovskii-Plaut-Stallman gave two examples of
compact geodesic spaces with free homotopy classes having no closed geodesics
in them. The first is 1-dimensional, consisting of a circle in the plane with
smaller and smaller straight segments added to join points around the
perimeter circle, with the induced geodesic metric. The circle itself is not a
closed geodesic because any segment may be \textquotedblleft
bypassed\textquotedblright\ by a shorter straight segment, but it is, up to
monotone reparametization, the only curve, hence the shortest curve, in its
homotopy class. Now for any entourage $E$, the circle is $E$-homotopic to a
shorter curve that is a piecewise segment and is shortest in its $E$-homotopy
class. Note that although this segment visually has \textquotedblleft
corners\textquotedblright\ in the construction, it is still a closed geodesic
with the induced geodesic metric.

The other example is the infinite torus, the countable product $T^{\infty}$ of
circles with the Tychonoff topology. This space can be metrized by making the
sizes of the circles square summable and taking the geometric product metric
(see \cite{PS}). This space is of course a compact topological group and this
metric is bi-invariant. For this example, Stallman proved in his dissertation
that (1) there is a unique 1-parameter subgroup (i.e. a homomorphism
$\theta:\mathbb{R\rightarrow}T^{\infty}$) in each free homotopy class of a
curve, (2) there are free homotopy classes containing no rectifiable curves
(3) if there are rectifiable curves then the shortest one is the unique
1-parameter subgroup in it, but (4) the 1-parameter subgroup may not be a
closed geodesic. It would be interesting to see whether the shortest curves in
free $E$-homotopy classes given by Proposition \ref{semi} are also 1-parameter subgroups.
\end{example}

\end{document}